\documentclass[12pt,reqno]{amsart}
\usepackage{amsmath,amsfonts,amsthm,amssymb,color,hyperref}

\usepackage[T1]{fontenc}
\usepackage{mathtools}
\usepackage{graphicx}

\usepackage{subfigure} 

\makeatletter
     \def\section{\@startsection{section}{1}%
     \z@{.7\linespacing\@plus\linespacing}{.5\linespacing}%
     {\bfseries
     \centering
     }}
     \def\@secnumfont{\bfseries}
     \makeatother

\usepackage{graphicx}


\newcommand{\R}{\mathbb R}

\newcommand{\Z}{\mathbb Z}
\newcommand{\E}{\mathbb E}

\newcommand{\gammav}{\gamma_{\rm av}}


\newcommand{\co}{R}


\newcommand{\ep}{\varepsilon}


\newcommand{\cexpf}{2H} 
\newcommand{\cexps}{2H(1+\gamma)} 
\newcommand{\cexpt}{2H} 
\newcommand{\cexp}{2H} 

\textwidth15cm
\textheight21cm
\evensidemargin.2cm
\oddsidemargin.2cm
\newtheorem{theorem}{Theorem}[section]
\newtheorem{lemma}[theorem]{Lemma}
\newtheorem{proposition}[theorem]{Proposition}
\newtheorem{corollary}[theorem]{Corollary}

\theoremstyle{definition}

\theoremstyle{remark}
\newtheorem{remark}[theorem]{Remark}

\numberwithin{equation}{section}
\setcounter{page}{1}

\newcommand{\CR}{\textcolor{black}}

\begin{document}
\title[Local times]{Local times and sample path properties of the Rosenblatt process}
\author[G. Kerchev]{George Kerchev}
\address{University of Luxembourg, Department of Mathematics, Luxembourg}
 \email{george.kerchev@uni.lu}
\author[I. Nourdin]{Ivan Nourdin}
\address{University of Luxembourg, Department of Mathematics, Luxembourg}
 \email{ivan.nourdin@uni.lu}
 \author[E. Saksman]{Eero Saksman}
\address{University of Helsinki, Department of Mathematics and Statistics, Finland}
\email{eero.saksman@helsinki.fi}
\author[L. Viitasaari]{Lauri Viitasaari}
\address{Aalto University School of Business, Department of Information and Service Management, Finland}
\email{lauri.viitasaari@iki.fi}

\begin{abstract}
Let $Z = (Z_t)_{t \geq 0}$ be the Rosenblatt process with Hurst index $H \in (1/2, 1)$. We prove joint continuity for the local time of $Z$, and  establish H\"older conditions for the local time. These results are then used to study the irregularity of the sample paths of $Z$.  Based on analogy with similar known results in the case of fractional Brownian motion, we believe our results are sharp. A main ingredient of our proof is a rather delicate spectral analysis of arbitrary linear combinations of integral operators, which arise from  the representation of  the Rosenblatt process as an element in the  second chaos.  
\end{abstract}

\maketitle

\medskip\noindent
{\bf Mathematics Subject Classifications (2010)}: Primary 60G18; Secondary 60J55.

\medskip\noindent
{\bf Keywords:} Rosenblatt process, Local times, Fourier transform, Hilbert-Schmidt operator.

\allowdisplaybreaks

\section{Introduction}

The Rosenblatt process $Z = (Z_t)_{t \geq 0}$ is a self-similar stochastic process with long-range dependence and heavier tails than those of the normal distribution. It depends on a parameter $H \in (\frac12, 1)$ which is fixed in what follows. 
The process $Z$ belongs to the family of Hermite processes that naturally arise as limits of normalized sums of long-range dependent random variables \cite{Dobrushin-Major-1979}. The first Hermite process is the fractional Brownian motion, which is Gaussian and thus satisfies many desirable properties. The Rosenblatt process is the second Hermite process: it is no longer Gaussian and was introduced by Rosenblatt in \cite{Rosenblatt-1961}.  Although less popular than fractional Brownian motion, the Rosenblatt process is attracting increasing interest in the literature, mainly due to its self-similarity, stationarity of increments\CR{,} and long-range dependence properties. See for example~\cite{Aurzada-Monch-2018, Chronopoulou-Viens-Tudor-2009,Gu-Bal-2012, Nourdin-Tran-2019}  for recent studies on different aspects of this process.

The Rosenblatt process $(Z_t)_{t \geq 0}$ admits the following stochastic representation, also known as the spectral representation:
\begin{align}
\label{eq:rosen_rep_kernel} Z_t = \int_{\mathbb{R}^2} H_t(x, y) Z_G(dx) Z_G(dy).
\end{align}
In (\ref{eq:rosen_rep_kernel}), the double Wiener-It\^o integral is taken over $x \neq \pm y$ \CR{ and}  $Z_G(dx)$ is a complex-valued random white noise with control measure $G$ satisfying  $G(tA) = |t|^{1 - H} G(A)$ for all $t \in \R$ and Borel sets $A \in \R$ and   $G(dx) = |x|^{-H} dx$\CR{. The} kernel $H_t(x, y)$ is given by
\begin{align}
\label{eq:rosen_kernel} H_t(x, y) = \frac{e^{i t (x + y) } -1 }{i (x + y)},
\end{align}
\noindent and is a complex valued Hilbert-Schmidt kernel satisfying $H_t(x, y) = H_t(y, x) = \overline{H_t(-x, -y)}$ and $\int_{\mathbb{R}^2} |H_t(x, y)|^2 G(dx) G(dy)< \infty$.
\noindent In particular, the spectral theorem applies, see~\cite{Dobrushin-Major-1979}, and allows one to write
\begin{align}
\label{eq:rosenblatt_spectral_dec} Z_t \overset{d}{=} \sum_{k =1 }^{\infty} \lambda_k (X_k^2 -1 ), 
\end{align}
\noindent where $(X_k)_{k \geq 1}$ is a sequence of independent standard Gaussian random variables and $(\lambda_k)_{k \geq 1}$ are the eigenvalues\footnote{\CR{We shall actually later work more with the singular value sequence, which for a compact self-adjoint operator coincides with sequence of the absolute values of the eigenvalues in a decreasing order,  and each one repeated according to multiplicity. Singular values have the advantage that they can be defined also for non-self-adjoint operators and satisfy very handy inequalities.}} \CR{(repeated according to the possible multiplicity)} of the \CR{self-adjoint} operator $A:L_G^2 (\mathbb{R}) \to L_G^2 (\mathbb{R})$, given by
\begin{align}
\notag (Af) (x) \coloneqq \int_{\mathbb{R}} H_t(x, -y) h(y) G(dy) = \int_{\mathbb{R}} H_t(x, -y) h(y) |y|^{-H} dy. 
\end{align}
\noindent Furthermore, $\sum_{k \geq 1} \lambda_k^2 < \infty$ and thus~\eqref{eq:rosenblatt_spectral_dec} converges.

\smallskip

\smallskip

The goal of the present paper is to study the occupation density (also known as \CR{the} local time) $L(x, I)$ of $Z$\CR{,} where $x \in \mathbb{R}$ and $I \subset [0, \infty)$ is a finite interval. 
Recall that for a deterministic function $f: \mathbb{R}_+ \to \mathbb{R}$, the occupation measure of $f$ is defined by
\begin{align}
\notag \nu(A, B) = \mu(B \cap f^{-1}(A)), 
\end{align}
\noindent where $A\subset\R$ and $B\subset\R_+$ are Borel sets and $\mu$ is the Lebesgue measure on $\mathbb{R}_+$. Observe that $\nu(A,B)$ represents the amount of time during a period $B$ where $f$ takes value in $A$. Then, when $\nu(\cdot, B)$ is absolutely continuous with respect to $\mu$, the occupation density (or local time) is given by the Radon-Nikodym derivative
\begin{align}
\notag L(x, B) = \frac{d \nu}{d \mu}(x, B).
\end{align}
\noindent We study the local time via the analytic approach \CR{initiated by} Berman~\cite{Berman-1969}. The idea is to relate properties of $L(x, B)$ with the integrability properties of the Fourier transform of $f$. Recall the following key result~\cite{Dozzi-2003}:
\begin{proposition}\label{prop:dozzi} The function $f$ has an occupation density $L(x, B)$ for $x \in\mathbb{R}, B \in \mathcal{B}([u,U])$ which is square integrable in $x$ for every fixed $B$ iff
\begin{align}
\notag \int_{\mathbb{R}} \left| \int_u^U \exp (i \xi f(t) )dt \right|^2 d \xi < \infty.
\end{align}
Moreover, in this case, the occupation density can be represented as
\begin{align}
\notag L(x, B) = \frac{1}{2 \pi} \int_\R \int_B \exp ( i \xi (x- f(s)) ) d\xi ds.
\end{align}
\end{proposition}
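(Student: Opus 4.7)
The plan is to carry out the Fourier-analytic argument in the spirit of Berman. The first step I would take is to compute the Fourier transform of the occupation measure $\nu(\cdot, B)$ in the spatial variable. Since by definition $\nu(\cdot, B)$ is the pushforward under $f$ of Lebesgue measure restricted to $B$, the change-of-variables formula yields
\[
\hat\nu(\xi, B) := \int_{\mathbb{R}} e^{-i\xi x}\, \nu(dx, B) = \int_B e^{-i\xi f(t)}\, dt,
\]
whose $L^2$ norm in $\xi$ is exactly the quantity appearing in the proposition (complex conjugation does not affect the modulus). This identity is the bridge between the analytic hypothesis and the measure-theoretic conclusion.

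Next I would translate the stated equivalence via Plancherel. The forward direction (existence of an $L^2$ density implies Fourier transform in $L^2$) is an immediate application of Plancherel to $L(\cdot, B)$. For the converse, I would mollify: convolve $\nu(\cdot, B)$ with a smooth approximate identity $\varphi_\varepsilon$. Each $\nu(\cdot, B) \ast \varphi_\varepsilon$ is a continuous non-negative function with Fourier transform $\hat\nu(\cdot, B)\, \hat\varphi_\varepsilon$, and by Plancherel combined with the uniform bound $|\hat\nu(\xi, B)| \leq |B|$ and the hypothesis $\hat\nu(\cdot, B) \in L^2$, this family is Cauchy in $L^2(\mathbb{R})$. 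Passing to the limit produces a non-negative $g_B \in L^2$ whose Fourier transform equals $\hat\nu(\cdot, B)$. Uniqueness of Fourier transforms of tempered distributions then forces $\nu(dx, B) = g_B(x)\, dx$, so $L(\cdot, B) = g_B \in L^2$.

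Once the $L^2$ density is in hand, $L^2$-Fourier inversion gives
\[
L(x, B) = \frac{1}{2\pi}\int_{\mathbb{R}} e^{i\xi x}\, \hat\nu(\xi, B)\, d\xi = \frac{1}{2\pi}\int_{\mathbb{R}} \int_B e^{i\xi(x - f(s))}\, ds\, d\xi,
\]
which is the asserted representation (with the outer integral understood as an $L^2$-limit). The extension from $B = [u, U]$ to arbitrary Borel $B \subset [u, U]$ is automatic, since $\nu(\cdot, B) \leq \nu(\cdot, [u, U])$ as measures, so $L(\cdot, B) \leq L(\cdot, [u, U])$ pointwise a.e.\ and hence inherits square integrability.

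The main obstacle is the converse direction of the equivalence, where one must upgrade the integrability hypothesis on $\hat\nu(\cdot, B)$ to genuine absolute continuity of $\nu(\cdot, B)$ rather than mere approximation by absolutely continuous measures. The mollification/Plancherel argument outlined above is the clean way to accomplish this; once it is in place, the remaining steps reduce to routine applications of the pushforward formula, Fubini, and Plancherel.
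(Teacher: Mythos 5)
The paper does not prove Proposition~\ref{prop:dozzi} at all: it is quoted from \cite{Dozzi-2003} (ultimately Berman's Fourier-analytic occupation-density criterion), so there is no in-paper argument to compare against. Your proof is precisely that classical argument and is correct: the pushforward identity $\hat\nu(\xi,B)=\int_B e^{-i\xi f(t)}dt$, Plancherel for the forward implication, the $L^2$-Cauchy property of $\nu(\cdot,B)\ast\varphi_\varepsilon$ with identification of the limit by uniqueness of tempered-distribution Fourier transforms for the converse, the monotonicity step $L(\cdot,B)\le L(\cdot,[u,U])$ to pass from $B=[u,U]$ to arbitrary Borel $B$, and the $L^2$-limit reading of the inversion formula are all in order.
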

The deterministic function $f(t)$ can be chosen to be the single path of a stochastic process $(X_t)_{t\geq 0}$. To show almost sure existence and square integrability of $L(x,B)$ in that case, it will be enough to show that
\begin{align}
\notag \mathbb{E} \left[  \int_{\mathbb{R}} \left| \int_u^U \exp (i \xi X_t )dt \right|^2 d \xi \right] < \infty,
\end{align}
or equivalently
\begin{align}
\label{eq:bound_fourier_ex}   \int_{\mathbb{R}} \int_u^U \int_u^U \E [ \exp (i \xi (X_s - X_t)\CR{)} ] ds dt d \xi < \infty.
\end{align}
If $(X_t)_{t \geq 0}$ is Gaussian, then one can evaluate $\E[ \exp \CR{(i \xi (X_s - X_t))} ]$ explicitly to establish~\eqref{eq:bound_fourier_ex}. It leads to the well-known Gaussian criterion:
\begin{proposition}[Lemma 19 in~\cite{Dozzi-2003}]  Suppose that $X$ is Gaussian and centered, and satisfies
\begin{align}
\notag \int_{[u,U]^2} \Delta(s,t)^{-1/2} ds dt < \infty, 
\end{align}
where $\Delta(s,t) = \E [ (X_s - X_t)^2]$. Then $(X_t)_{t \geq 0}$ has an occupation density $L = L(x, B, \omega)$ which, for $B$ fixed, is $P-$a.s. square integrable in $x$.
\end{proposition}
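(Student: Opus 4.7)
The plan is to reduce to Proposition \ref{prop:dozzi} applied pathwise and show that the finiteness hypothesis on $\int_{[u,U]^2}\Delta(s,t)^{-1/2}ds\,dt$ is exactly what is needed to guarantee \eqref{eq:bound_fourier_ex}. First I would note that since we want an almost-sure statement and the integrand is nonnegative, it suffices by Tonelli's theorem and Markov's inequality to verify
\begin{align}
\notag \mathbb{E}\left[\int_{\mathbb{R}}\int_u^U\int_u^U \exp(i\xi(X_s-X_t))\,ds\,dt\,d\xi\right]<\infty,
\end{align}
since this forces the $\omega$-integrand to be $P$-a.s.\ finite, at which point Proposition~\ref{prop:dozzi} yields the existence of an occupation density $L(\cdot,B,\omega)$ square-integrable in $x$.

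Next I would exchange the order of integration (Fubini/Tonelli applies by nonnegativity of $|e^{i\xi(X_s-X_t)}|$ and by the already nonnegative real integrand obtained after taking expectation). The key computation is the characteristic function of the centered Gaussian $X_s-X_t$: using that the variance of $X_s-X_t$ equals $\Delta(s,t)$,
\begin{align}
\notag \mathbb{E}[\exp(i\xi(X_s-X_t))]=\exp\!\left(-\tfrac{1}{2}\xi^2\Delta(s,t)\right).
\end{align}
Integrating first in $\xi$ yields the Gaussian integral
\begin{align}
\notag \int_{\mathbb{R}}\exp\!\left(-\tfrac{1}{2}\xi^2\Delta(s,t)\right)d\xi=\sqrt{2\pi}\,\Delta(s,t)^{-1/2},
\end{align}
valid whenever $\Delta(s,t)>0$; points with $\Delta(s,t)=0$ form a Lebesgue-null set of $[u,U]^2$ under the assumed integrability, so they can be ignored in the double integral.

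Inserting this back and invoking the hypothesis gives
\begin{align}
\notag \mathbb{E}\left[\int_{\mathbb{R}}\int_u^U\int_u^U e^{i\xi(X_s-X_t)}\,ds\,dt\,d\xi\right]=\sqrt{2\pi}\int_{[u,U]^2}\Delta(s,t)^{-1/2}ds\,dt<\infty,
\end{align}
which is precisely \eqref{eq:bound_fourier_ex} in expectation, hence $P$-a.s. The square integrability of $L(\cdot,B,\omega)$ in $x$ then follows directly from Proposition~\ref{prop:dozzi}. There is essentially no hard step here; the only mild subtlety is checking that the set $\{(s,t):\Delta(s,t)=0\}$ does not contribute, but this is immediate from the integrability assumption. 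The proof is thus a clean combination of Proposition~\ref{prop:dozzi}, Fubini's theorem, and the explicit Gaussian characteristic function.
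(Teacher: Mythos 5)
Your argument is correct and is exactly the route the paper has in mind: the paper cites this result from Dozzi and sketches the same reduction in the preceding paragraphs, namely checking \eqref{eq:bound_fourier_ex} by computing $\E[\exp(i\xi(X_s-X_t))]=\exp(-\tfrac12\xi^2\Delta(s,t))$ and integrating in $\xi$ to produce $\sqrt{2\pi}\,\Delta(s,t)^{-1/2}$, then invoking Proposition~\ref{prop:dozzi} pathwise. Your handling of Tonelli and of the null set $\{\Delta=0\}$ is fine, so there is nothing to add.
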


In our setting $(X_t)_{t \geq 0} = (Z_t)_{t \geq 0}$  is the Rosenblatt process which is not Gaussian. Nevertheless, a careful \CR{analysis of} $\E [\CR{\exp (i \xi (Z_s - Z_t))} ] $  via~\eqref{eq:rosenblatt_spectral_dec} yields~\eqref{eq:bound_fourier_ex}. This is the approach in~\cite{Shevchenko-2010} where existence of the local time of the Rosenblatt process was first established. In this paper we show a \CR{considerably more involved} bound on the Fourier transform which in turn will yield deeper results   regarding the irregularity properties of the sample paths.  The following is the key result of our paper:


\begin{proposition}\label{prop:int_fourier} Let $n \in \mathbb{N}$ and  $0 \leq \eta \CR{<} \frac{1-H}{2H}$. Then, for any times $0 \CR{\leq} u < U$, the Rosenblatt process satisfies
\begin{align}
\label{eq:bound_fourier}   \int_{[u, U]^n} \int_{\mathbb{R}^n} \prod_{j =1}^n |\xi_j|^{\eta} \left| \mathbb{E} \exp \left( i \sum_{j = 1}^n \xi_j Z_{t_j} \right) \right| d\xi dt  \leq C^n n^{2nH(1 + \eta)} (U - u)^{(1 - H(1 + \eta) ) n},
\end{align} 
\noindent where the constant $C > 0$ depends only on $H$ and $\eta$. 
\end{proposition}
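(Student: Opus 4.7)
The strategy combines the second-chaos structure of the Rosenblatt process with a spectral analysis of the multi-time kernel. By linearity of the double Wiener--It\^o integral, $\sum_{j=1}^n \xi_j Z_{t_j}=I_2(K_{\xi,t})$ where $K_{\xi,t}(x,y)=\sum_j \xi_j H_{t_j}(x,y)$. Let $\cac_{\xi,t}$ be the associated self-adjoint Hilbert--Schmidt operator on $L_G^2(\R)$ with kernel $K_{\xi,t}(x,-y)$, and denote by $\{\mu_k(\xi,t)\}$ its eigenvalue sequence. The spectral theorem for second-chaos variables (the same one behind~\eqref{eq:rosenblatt_spectral_dec}) yields $\sum_j\xi_j Z_{t_j}\stackrel{d}{=}\sum_k\mu_k(\xi,t)(X_k^2-1)$ with $X_k$ i.i.d.\ standard normal, and the standard chi-square characteristic function computation then gives
$$\left|\E\exp\Big(i\sum_{j=1}^n\xi_j Z_{t_j}\Big)\right|=\prod_k\big(1+4\mu_k(\xi,t)^2\big)^{-1/4}.$$
The proposition thus reduces to bounding the integral of $\prod_j|\xi_j|^\eta$ against this product.

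The decisive ingredient --- and the main obstacle --- is a quantitative lower bound on sufficiently many singular values of $\cac_{\xi,t}$ in terms of both the magnitudes $|\xi_j|$ and the spacings of the points $t_j$. After sorting $t_{(1)}<\cdots<t_{(n)}$ with gaps $\Delta_j=t_{(j)}-t_{(j-1)}$, one expects that, absent cancellation, each incremental kernel $H_{t_{(j)}}-H_{t_{(j-1)}}$ contributes to $\cac_{\xi,t}$ a block of singular values of size governed by $|\xi_{(j)}|\Delta_j^H$ (by $H$-self-similarity of the individual Rosenblatt operators) and decaying at the universal rate $k^{-H(1+\gamma)}$ known for the singular values of a single $H_t$. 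Making this rigorous requires a delicate frequency-localization argument showing that blocks arising from different times are approximately orthogonal in $L_G^2(\R)$, so that destructive interference between the $\xi_j H_{t_j}$ cannot collapse all singular values simultaneously. Minimax comparison principles for singular values of operator sums then transfer these local estimates into a genuine lower bound on the top singular values of $\cac_{\xi,t}$. This is precisely the ``rather delicate spectral analysis'' advertised in the abstract: it has no Gaussian analogue (for fractional Brownian motion the analogous step is local nondeterminism, which follows from comparatively soft covariance estimates), and it is what makes the problem substantially harder than its Gaussian counterpart.

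Given such a spectral lower bound, the integration step is largely routine. Retaining enough factors in $\prod_k(1+4\mu_k^2)^{-1/4}$, the $\xi$-integrations decouple into one-dimensional integrals of the form $\int_\R|\xi|^\eta(1+c\xi^2\Delta^{2H})^{-\beta}d\xi$, which are finite and equal to a constant times $\Delta^{-H(1+\eta)}$ provided $\beta$ is chosen large enough --- a choice permitted precisely by the assumption $\eta<(1-H)/(2H)$. The resulting $t$-integral on the simplex $u<t_{(1)}<\cdots<t_{(n)}<U$ of $\prod_j\Delta_j^{-H(1+\eta)}$ is an iterated Beta integral, homogeneous in length $U-u$, which supplies the prefactor $(U-u)^{(1-H(1+\eta))n}$. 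Combining this Beta product with the $n!$ arising from unordering the $t$'s, with Stirling's formula, and with the factorial-type contributions coming from the universal $k^{-H(1+\gamma)}$ decay within each spectral block, produces the claimed growth $C^n n^{2nH(1+\eta)}$.
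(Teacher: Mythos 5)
Your reduction to the second-chaos spectral representation and the identity $\left|\E\exp\left(i\sum_j\xi_jZ_{t_j}\right)\right|=\prod_k(1+4\mu_k^2)^{-1/4}$ is exactly the paper's first step (Lemma~\ref{lem:part_eval}). But the heart of the matter — the quantitative lower bound on the singular values of the multi-time operator — is precisely the point where your proposal stops being a proof. You assert that each incremental kernel contributes ``a block of singular values of size governed by $|\xi_{(j)}|\Delta_j^H$'' and that this can be made rigorous by showing the blocks coming from different times are ``approximately orthogonal.'' No such orthogonality is available: after writing $\sum_j\xi_j\chi_{[0,t_j]}=\sum_j\xi'_j\chi_{I_j}$ with disjoint intervals $I_j$ and cumulative coefficients $\xi'_j=\sum_{\ell\geq j}\xi_\ell$, the operator is $K_{H/2}M_gK_{H/2}$, and the nonlocal convolution $K_{H/2}$ destroys any exact or approximate block-diagonal structure; since $g$ changes sign, cancellation is a real threat (this is exactly Remark~\ref{rem:operator} in the paper). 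The paper does not prove a block structure at all: it proves only the weaker, one-sided bound $\mu_n(B_{t,\xi})\geq C(H)\max_j|\xi'_j-\xi'_{j+1}|\,|t_j-t_{j-1}|^H\,\tilde\mu_n^2$ with $\tilde\mu_n\sim cn^{-H/2}$ (Lemma~\ref{lem:part_new_operator}), and the mechanism is not orthogonality but the boundedness on $L^2(\R)$ of $U_{\alpha,I}=K_\alpha M_{\chi_I}K_{-\alpha}$, which rests on the Hunt--Muckenhoupt--Wheeden $A_2$-weighted estimate for the Hilbert transform, combined with the minimax principle and Dostani\'c's asymptotics for $M_{\chi_J}K_\alpha M_{\chi_J}$. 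Without either this lemma or a genuine proof of your claimed block decomposition, the key estimate is simply postulated, so the argument has a gap exactly at the step you yourself identify as the main obstacle. (Your stated per-block decay rate $k^{-H(1+\gamma)}$ also has no source; the correct decay for the localized operator is $k^{-H/2}$ for $\tilde\mu_k$, hence $k^{-H}$ after squaring.)

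A second, smaller but real problem is the bookkeeping between the original coefficients $\xi_j$ and the increment coefficients $\xi'_j$. The weight in the integrand is $\prod_j|\xi_j|^\eta=\prod_j|\xi'_j-\xi'_{j+1}|^\eta$, while any spectral lower bound is naturally expressed in the $\xi'_j$; your proposed decoupling into one-dimensional integrals $\int_\R|\xi|^\eta(1+c\xi^2\Delta^{2H})^{-\beta}d\xi$ silently identifies the two. The paper handles this by the splitting $|\xi'_j-\xi'_{j+1}|^\eta\leq c(|\xi'_j|^\eta+|\xi'_{j+1}|^\eta)$, producing $2^{n-1}$ terms with exponents $\gamma_j\in\{0,\eta,2\eta\}$; it is the appearance of $2\eta$ here (through $\Gamma(1-H(1+\gamma_j))$ and the convergence of the $t$-integral near coincident times) that forces the hypothesis $\eta<\frac{1-H}{2H}$, not a choice of ``$\beta$ large enough'' in the $\xi$-integral as you suggest. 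If you repair the spectral step, you will still need this reindexing argument (or the paper's polar-coordinate computation via Lemmas~\ref{lem:int_svalue} and~\ref{lem:alku}) to land on the stated constant $C^n n^{2nH(1+\eta)}(U-u)^{(1-H(1+\eta))n}$ under the stated range of $\eta$.
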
 

Proposition~\ref{prop:int_fourier} can be applied to obtain the following  H\"older conditions on $L(x,B)$. 

\begin{theorem}\label{thm:main}
Let $(Z_t)_{t\geq 0}$ be a Rosenblatt process with $H\in\left(\frac12,1\right)$. The local time \CR{
$
(x,t) \mapsto L(x, [0,t]) 
$} is \CR{almost surely} jointly continuous and has finite moments. For a finite closed interval $I \subset (0, \infty)$, let $L^*(I) = \sup_{x\in\R}L(x,I)$. There exist constants $C_1$ and $C_2$ such that, almost surely,
\begin{align}
\label{eq:main-fixed-s}
\limsup_{r\to 0} \frac{L^*([s-r,s+r])}{r^{1-H}(\log\log r^{-1})^{\cexp}} \leq C_1,
\end{align}
for any $s \in I$ and
\begin{align}
\label{eq:main-sup-s}
\limsup_{r\to 0}\sup_{s\in I} \frac{L^*([s-r,s+r])}{r^{1-H}(\log r^{-1})^{\cexp}} \leq C_2.
\end{align}
\noindent 
\end{theorem}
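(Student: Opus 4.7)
The proof follows the Fourier-analytic scheme of Berman and Geman--Horowitz, with Proposition~\ref{prop:int_fourier} substituting for the Gaussian moment calculation usually available in this setting. First, I would establish Berman-type moment estimates: starting from the Fourier formula $L(x,B)=\tfrac{1}{2\pi}\int_{\R}e^{-i\xi x}\bigl(\int_B e^{i\xi Z_s}ds\bigr)d\xi$ supplied by Proposition~\ref{prop:dozzi}, take the $n$th power of $L(x,B)-L(y,B)$, expand into an iterated integral, apply $|e^{-i\xi x}-e^{-i\xi y}|\leq 2^{1-\eta}|\xi|^\eta|x-y|^\eta$ on each of the $n$ factors, and invoke Proposition~\ref{prop:int_fourier} to obtain
\begin{equation*}
\E\bigl[|L(x,B)-L(y,B)|^n\bigr]\leq C^n n^{2nH(1+\eta)}|B|^{(1-H(1+\eta))n}|x-y|^{n\eta},\qquad \eta\in\bigl[0,\tfrac{1-H}{2H}\bigr).
\end{equation*}
The case $\eta=0$ controls $\E[L(x,B)^n]$, and replacing $B$ by $[s,t]$ in $L(x,[0,t])-L(x,[0,s])=L(x,[s,t])$ gives the analogous estimate in the time variable.

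Second, joint continuity of $(x,t)\mapsto L(x,[0,t])$ follows from the two-parameter Kolmogorov criterion applied with some fixed $\eta\in(0,\tfrac{1-H}{2H})$ and $n$ sufficiently large. To replace $L(x_0,B)$ by the maximal local time $L^*(B)$, I would use $L^*(B)\leq L(x_0,B)+K_\eta\,\rho^\eta$, where $\rho=\mathrm{diam}\,Z(B)$ and $K_\eta$ is the $\eta$-H\"older seminorm of $L(\cdot,B)$; choosing $\eta=1/n$ makes $\rho^{n\eta}=\rho$, with $\E[\rho^n]\leq(Cn|B|^H)^n$ by hypercontractivity on the second chaos, while Garsia--Rodemich--Rumsey combined with the first step controls $\E[K_{1/n}^{2n}]^{1/2}$. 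Combining these yields
\begin{equation*}
\E\bigl[L^*(B)^n\bigr]\leq C^n n^{2nH}|B|^{(1-H)n}.
\end{equation*}

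Third, \eqref{eq:main-fixed-s} and \eqref{eq:main-sup-s} follow by Markov's inequality and Borel--Cantelli on the dyadic scales $r_k=2^{-k}$. For \eqref{eq:main-fixed-s}, set $I_k=[s-r_k,s+r_k]$; then
\begin{equation*}
\mathbb{P}\bigl(L^*(I_k)\geq K r_k^{1-H}(\log\log r_k^{-1})^{\cexp}\bigr)\leq (C/K)^m\bigl(m/\log\log r_k^{-1}\bigr)^{2Hm},
\end{equation*}
and choosing $m=\lceil\log\log r_k^{-1}\rceil\asymp\log k$ makes the second factor $O(1)$ and the first summable in $k$ for $K$ large enough. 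For \eqref{eq:main-sup-s}, cover $I$ by $O(2^k)$ intervals of length $r_k$ centered on an $r_k$-net, apply the same estimate with target $r_k^{1-H}(\log r_k^{-1})^{\cexp}$ and $m=\lceil\log r_k^{-1}\rceil\asymp k$, and absorb the union-bound factor $O(2^k)$ into $(C/K)^k$ for $K$ sufficiently large. Borel--Cantelli, followed by the usual interpolation between dyadic radii, yields both conclusions.

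The main technical difficulty is the second step: a naive H\"older interpolation with a fixed $\eta>0$ inflates the polynomial exponent from $n^{2nH}$ to $n^{2nH(1+\eta)}$ and, in the Borel--Cantelli computation, produces a fatal factor $r_k^{-H\eta m}=2^{H\eta km}$ that cannot be summed. The $\eta=1/n$ trick cancels this inflation exactly, but implementing it rigorously requires careful tracking of the dependence of all constants on $\eta$ as $\eta\downarrow 0$, which ultimately rests on the quantitative sharpness of the singular-value analysis behind Proposition~\ref{prop:int_fourier}.
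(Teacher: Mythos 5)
Your overall architecture (moment estimates from Proposition~\ref{prop:int_fourier}, Kolmogorov for joint continuity, dyadic Borel--Cantelli in $r$) matches the paper, but your treatment of the supremum over $x$ — the only genuinely delicate point — has a gap. You propose to bound $\E[L^*(B)^n]$ directly via $L^*(B)\le L(x_0,B)+K_\eta\rho^\eta$ with $\rho=\mathrm{diam}\,Z(B)$ and $\eta=1/n$. For that pointwise inequality to hold, $x_0$ must lie in the (random) range $\overline{Z(B)}$, e.g.\ $x_0=Z_s$; but then $L(x_0,B)$ is the local time at a random level that is strongly dependent on the path, and the moment bounds from your first step (which are for deterministic $x$) do not apply to it. The paper supplies exactly this missing ingredient: shifted moment estimates for $L(x+Z_a,[s,t])$ with $a$ an endpoint of the interval (Theorem~\ref{thm:moment}), proved via stationarity of increments, together with the corresponding tail bounds (Corollary~\ref{cor:tail1}(ii)). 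A second, related problem sits in your GRR control of $K_{1/n}$: the seminorm must be taken over a neighborhood of $Z_s$ of size $\sim|B|^H$, which is again random in location; over any deterministic interval $J$ of order-one length the GRR computation (with the necessary gap of order $1/n$ between the H\"older exponent used in GRR and the exponent $\eta$ available in the moment bound) leaves an extra factor of order $|B|^{-H}|J|$ in $\E[L^*(B)^n]$. At scale $|B|\asymp 2^{-k}$ this is $2^{kH}$, which is fatal for \eqref{eq:main-fixed-s}, where you only take $m\asymp\log\log r_k^{-1}$ moments and the compensating factor $(C/K)^m$ is merely polylogarithmic in $k$. So the claimed bound $\E[L^*(B)^n]\le C^n n^{2nH}|B|^{(1-H)n}$ is not established by the argument as written, and the "$\eta=1/n$ trick" does not by itself resolve the issue; the difficulty is not the $\eta$-dependence of constants but the random centering and localization.

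The paper avoids a moment bound on $L^*$ altogether: after recentering at $Z_s$, it bounds $\sup_{t\in C_n}|Z_t-Z_s|\le 2c_1 2^{-nH}\log n$ eventually a.s.\ (Proposition~\ref{prop:sup-tail} plus Borel--Cantelli), then runs a Xiao-type chaining argument in the $x$ variable: a grid $G_n$ of mesh $2^{-nH}(\log\log 2^n)^{-2H}$ inside that range, the tail bound \eqref{eq:tail-L} on grid points, and the increment tail bound \eqref{eq:tail-inc} along dyadic refinements of the grid, which together give $\sup_{|x-Z_s|\le 2c_12^{-nH}\log n}L(x,C_n)\le Cg(2^{-n})$; the $\log$-version \eqref{eq:main-sup-s} is obtained by the same scheme with an absolute grid and the dyadic partition of $I$. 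If you want to keep your route, you would need to (i) import the stationarity-of-increments shift to handle $L(Z_s,B)$, and (ii) either run GRR on the recentered local time over deterministic windows of size $\sim|B|^H u$ with an exponential cost in $u$ (using Proposition~\ref{prop:sup-tail} to control the exceptional event), or fall back on the paper's grid/chaining argument, which accomplishes the same localization with tail bounds instead of moments of the supremum.
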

In particular, the local time $L(x,I)$ is well defined for any fixed  $x$ and interval $I\subset(0,\infty)$.  \CR{Explicit estimates for the moments of the local time are provided  in Theorem \ref{thm:moment0-bound} below.} As a direct corollary we obtain: 

\begin{corollary}\label{cor:fixed-point}
For any finite closed interval $I \subset (0,\infty)$ there exists \CR{constants $C_1$ and $C_2$}, independent of $x$ and $t$, such that\CR{, almost surely,} for every $t\in I$ and every $x\in \R$
\begin{align}
\notag \limsup_{r\to 0} \frac{L(x,[t-r,t+r])}{r^{1-H}(\log\log r^{-1})^{\cexp}} \leq C_1,
\end{align}
\CR{and for every $x\in \R$
\begin{align}
\notag \limsup_{r\to 0}\sup_{t\in I} \frac{L(x,[t-r,t+r])}{r^{1-H}(\log r^{-1})^{\cexp}} \leq C_2.
\end{align}
}
\end{corollary}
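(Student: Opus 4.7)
The plan is direct: the corollary should follow immediately from Theorem~\ref{thm:main} by the trivial pointwise domination $L(x,I)\le L^*(I)$ that holds by the definition $L^*(I)=\sup_{x\in\R} L(x,I)$. The content of Theorem~\ref{thm:main} is precisely that $L^*$ satisfies the two asymptotic bounds almost surely, with constants $C_1, C_2$ independent of $x$ and $t$, so inserting $L(x,\cdot)$ in place of $L(x,\cdot)$'s upper envelope gives the same bounds uniformly in $x$.

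More concretely, first I would fix, on the almost sure event of Theorem~\ref{thm:main}, the constants $C_1, C_2$. For the first estimate, pick $t\in I$ and $x\in\R$ arbitrary. Since the intervals $[t-r,t+r]$ are contained in a slightly enlarged closed subinterval of $(0,\infty)$ for all small $r$, applying \eqref{eq:main-fixed-s} with $s=t$ yields
\[
\limsup_{r\to 0}\frac{L(x,[t-r,t+r])}{r^{1-H}(\log\log r^{-1})^{2H}}
\le \limsup_{r\to 0}\frac{L^*([t-r,t+r])}{r^{1-H}(\log\log r^{-1})^{2H}}\le C_1,
\]
which is the first claimed bound. For the second, fix $x\in\R$ and observe that
\[
\sup_{t\in I}\frac{L(x,[t-r,t+r])}{r^{1-H}(\log r^{-1})^{2H}}
\le \sup_{t\in I}\frac{L^*([t-r,t+r])}{r^{1-H}(\log r^{-1})^{2H}},
\]
so \eqref{eq:main-sup-s} gives the desired $\limsup$ bound by $C_2$.

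The only point requiring any care is the quantifier order: Theorem~\ref{thm:main} is phrased on a single almost sure event that does not depend on $x$, and the constants $C_1, C_2$ are themselves independent of $x$ and $t$. Hence on that single event the inequalities above hold simultaneously for \emph{every} $x\in\R$ and every $t\in I$, which is exactly what the corollary asserts. There is no genuine obstacle; the corollary is really just a restatement of Theorem~\ref{thm:main} once one recalls that $L^*$ dominates $L(x,\cdot)$ pointwise in $x$, and the two parts correspond respectively to the pointwise-in-$s$ and the uniform-in-$s$ statements in the theorem.
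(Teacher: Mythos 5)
Your proposal is correct and is essentially identical to the paper's own proof, which likewise deduces the corollary in one line from Theorem~\ref{thm:main} together with the pointwise bound $L(x,[t-r,t+r])\leq L^*([t-r,t+r])$. Your additional remarks on the quantifier order and the single almost sure event simply make explicit what the paper leaves implicit.
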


Moreover, we get the following for a particular Hausdorff measure:
\begin{corollary}\label{cor:hausdorff}
Let $I \subset (0, \infty)$ be a finite closed interval.  There exists a constant $C$ such that for every $x\in \R$ we have
\begin{align}
\notag \CR{{\mathcal H}_\phi}(Z^{-1}(x) \cap I) \geq CL(x,I), \quad \text{ a.s., } 
\end{align}
where \CR{${\mathcal H}_\phi$} denotes $\phi$-Hausdorff measure with $\phi(r) = r^{1-H}(\log \log r^{-1})^{\cexp}$.
\end{corollary}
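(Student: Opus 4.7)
The plan is to combine the uniform upper‐density bound for the local time provided by Corollary~\ref{cor:fixed-point} with the classical density theorem of Rogers--Taylor for Hausdorff measures. More precisely, recall that for any finite Borel measure $\mu$ on $\mathbb{R}$ and any Hausdorff function $\phi$, if a Borel set $E$ satisfies
\[
\limsup_{r\to 0}\frac{\mu([t-r,t+r])}{\phi(r)} \leq c \qquad \text{for every } t\in E,
\]
then $\mathcal{H}_\phi(E) \geq c^{-1}\mu(E)$. I would invoke this as a black box; it is standard (see e.g.\ Rogers, \emph{Hausdorff Measures}).

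The first step is to verify that the random measure $\mu := L(x,\cdot)\,\lfloor I$ is supported on $Z^{-1}(x)\cap I$. By the joint continuity of $L(\cdot,\cdot)$ established in Theorem~\ref{thm:main} and the occupation density identity, for every bounded Borel $B\subset I$ and every continuous $g:\mathbb{R}\to\mathbb{R}$ vanishing near $x$ we have $\int_B g(Z_s)\,ds = \int g(y)L(y,B)\,dy$, and letting $g$ approximate the indicator of $\mathbb{R}\setminus\{x\}$ together with joint continuity of $L$ in $x$ yields that $\mu$ is carried by the closed set $\{t\in I:Z_t=x\}=Z^{-1}(x)\cap I$. This is a standard consequence of joint continuity and I would write it out only briefly.

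The second step is to invoke Corollary~\ref{cor:fixed-point}: almost surely, for every $x\in\mathbb{R}$ and every $t\in I$,
\[
\limsup_{r\to 0}\frac{L(x,[t-r,t+r])}{r^{1-H}(\log\log r^{-1})^{2H}} \leq C_1.
\]
Since $L(x,[t-r,t+r])=\mu([t-r,t+r])$ for all sufficiently small $r$ (when $t$ is in the interior of $I$; the endpoints contribute no extra difficulty since the total Hausdorff measure of the level set can only grow if we extend $I$ a little), the hypothesis of the density theorem is satisfied on all of $Z^{-1}(x)\cap I$ with constant $c=C_1$.

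Applying the density theorem with $E=Z^{-1}(x)\cap I$ and $\phi(r)=r^{1-H}(\log\log r^{-1})^{2H}$ yields
\[
\mathcal{H}_\phi\bigl(Z^{-1}(x)\cap I\bigr) \;\geq\; \frac{\mu(Z^{-1}(x)\cap I)}{C_1} \;=\; \frac{L(x,I)}{C_1},
\]
which is the desired inequality with $C=1/C_1$. The only genuine point of care is that the exceptional null set in Corollary~\ref{cor:fixed-point} is independent of $(t,x)$; this is exactly how that corollary is phrased, so there is no measurability obstacle. The argument is otherwise routine once Theorem~\ref{thm:main} is in place — no additional probabilistic input is needed.
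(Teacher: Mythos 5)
Your argument is correct and follows exactly the route the paper takes: the paper's proof simply cites Corollary~\ref{cor:fixed-point} together with the Rogers--Taylor upper density theorem (as in Xiao's Theorem 4.1), which is precisely your combination of the density theorem with the fact that the measure $L(x,\cdot)$ is carried by the closed level set $Z^{-1}(x)\cap I$. You have merely written out the supporting details (support of the local time measure, uniformity of the null set) that the paper omits.
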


Furthermore, we can get a result on the behavior of the trajectories of \CR{$Z$}.
\begin{corollary}\label{cor:non-differentiable}
Let $I \subset (0, \infty)$ be a finite closed interval. There exists a constant $C> 0$ such that for every $s\in I$ we have, almost surely,
\begin{align}
\notag \liminf_{r\to 0} \sup_{s-r<t<s+r} \frac{|Z_t-Z_s|}{r^H (\log \log r^{-1})^{-\cexp}} \geq C,
\end{align}
and
\begin{align}
\notag \liminf_{r\to 0}\inf_{s\in I} \sup_{s-r<t<s+r} \frac{|Z_t-Z_s|}{r^H (\log \log r^{-1})^{-\cexp}} \geq C.
\end{align}
In particular, $Z$ is almost surely nowhere differentiable.
\end{corollary}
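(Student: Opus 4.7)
The plan is to derive the oscillation lower bounds from the local time upper bounds in Theorem~\ref{thm:main} via the defining property of occupation densities, then to deduce nowhere differentiability as a simple consequence since $H<1$.

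First I would fix $s\in I$ and $r>0$ small enough that $[s-r,s+r]\subset I$, and set
\[
M_s(r) \;=\; \sup_{t\in(s-r,s+r)}|Z_t-Z_s|.
\]
Since $Z$ is almost surely continuous (this is implicit in the joint continuity of the local time asserted in Theorem~\ref{thm:main}, and is in any case classical for the Rosenblatt process), the image $Z([s-r,s+r])$ is contained in the closed interval $[Z_s-M_s(r),Z_s+M_s(r)]$. Consequently, the measure $L(\,\cdot\,,[s-r,s+r])$ is supported in this interval of length $2M_s(r)$, and applying the occupation time formula
\[
2r \;=\; \int_{[s-r,s+r]}\!dt \;=\; \int_{\R} L(x,[s-r,s+r])\,dx
\]
yields
\[
2r \;\leq\; 2\,M_s(r)\, L^{*}([s-r,s+r]).
\]
Rearranging,
\[
M_s(r) \;\geq\; \frac{r}{L^{*}([s-r,s+r])}.
\]

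Next I would combine this with the two bounds in Theorem~\ref{thm:main}. Inserting \eqref{eq:main-fixed-s} gives, almost surely and simultaneously for every $s\in I$,
\[
\liminf_{r\to 0}\frac{M_s(r)}{r^{H}(\log\log r^{-1})^{-\cexp}}
\;\geq\; \liminf_{r\to 0}\frac{r^{1-H}(\log\log r^{-1})^{-\cexp}\cdot r}{L^{*}([s-r,s+r])\cdot r^{1-H}(\log\log r^{-1})^{-\cexp}}
\;\geq\; \frac{1}{C_1},
\]
which is the first inequality of the corollary with $C=1/C_1$. Applying \eqref{eq:main-sup-s} in exactly the same manner but taking an infimum over $s\in I$ of the left-hand side (and using $\sup_{s\in I}L^{*}([s-r,s+r])$ on the right, which controls each individual $L^{*}$) gives the second inequality with $C=1/C_2$, up to adjusting the constant.

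Finally, nowhere differentiability is immediate: if $Z$ were differentiable at some point $s\in I$, then $M_s(r)=O(r)$ as $r\to 0$, whereas the lower bound just obtained gives $M_s(r)\geq c\,r^{H}(\log\log r^{-1})^{-\cexp}$, which dominates $r$ since $H<1$. Hence differentiability at any $s\in I$ fails almost surely on the same event on which \eqref{eq:main-fixed-s} holds for every $s\in I$; as $I\subset(0,\infty)$ was arbitrary, $Z$ is almost surely nowhere differentiable on $(0,\infty)$. The only point requiring any care is the uniform (in $s\in I$) validity of the local time bound, which is exactly how Theorem~\ref{thm:main} is phrased, so there is no real obstacle beyond bookkeeping of constants and the null sets.
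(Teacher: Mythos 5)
Your core argument -- the occupation-density sandwich $2r=\int_{\R}L(x,[s-r,s+r])\,dx\leq 2M_s(r)\,L^*([s-r,s+r])$, hence $M_s(r)\geq r/L^*([s-r,s+r])$, combined with the H\"older bounds of Theorem~\ref{thm:main} -- is exactly the argument the paper has in mind (its proof is only a pointer to Theorem 4.5 of \cite{xiao-et-al}), and for the first display it works: for each fixed $s$, on the almost sure event where \eqref{eq:main-fixed-s} holds one gets $\liminf_{r\to0}M_s(r)/\bigl(r^H(\log\log r^{-1})^{-\cexp}\bigr)\geq 1/C_1$, which is the claimed inequality with the correct order of quantifiers (``for every $s$, almost surely'').

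There are, however, two genuine gaps. First, the second display does \emph{not} follow from \eqref{eq:main-sup-s} ``in exactly the same manner, up to adjusting the constant'': \eqref{eq:main-sup-s} carries the normalization $(\log r^{-1})^{\cexp}$, so the sandwich gives, uniformly in $s$, only $M_s(r)\geq c\,r^{H}(\log r^{-1})^{-\cexp}$, and since $(\log r^{-1})^{-\cexp}/(\log\log r^{-1})^{-\cexp}\to 0$ as $r\to0$, no adjustment of constants yields the stated bound with $(\log\log r^{-1})^{-\cexp}$. A uniform-in-$s$ lower bound at the $\log\log$ rate is in fact not expected to hold (compare with slow points of Gaussian processes, and with the second display of Corollary~\ref{cor:fixed-point}, which correctly uses $\log$); this is presumably a misprint in the corollary, and your proof should have detected the mismatch rather than asserted the step. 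Second, your deduction of nowhere differentiability invokes \eqref{eq:main-fixed-s} ``for every $s\in I$'' on a single event, but the proof of \eqref{eq:main-fixed-s} is a Borel--Cantelli argument at a fixed $s$, so the exceptional null set depends on $s$; from the pointwise statement alone you only get that, for each fixed $s$, $Z$ is a.s.\ non-differentiable at $s$, which is weaker than a.s.\ nowhere differentiability. The correct route is through the uniform second inequality (at the $\log$ rate): almost surely, for all $s\in I$ simultaneously and all small $r$, $M_s(r)\geq c\,r^{H}(\log r^{-1})^{-\cexp}$, and since $H<1$ this quantity divided by $r$ tends to infinity, ruling out differentiability at every point of $I$ on one event.
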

In Section~\ref{sec:int_fourier} we establish our main result Proposition~\ref{prop:int_fourier}. \CR{As we are dealing with  a second order Hermite process, we are forced to control from below  singular values  of somewhat unwieldy operators (see Remark \ref{rem:operator} below). For this purpose we need to introduce several technical lemmas exhibiting tools from operator theory and harmonic analysis, including the theory of weighted integrals. Their} proofs are postponed \CR{into} Section~\ref{sec:lemmas_proofs}. Section~\ref{sec:moments} is dedicated to some results regarding the existence and joint continuity of the local time. In particular, bounds on the moments of $L(x, B)$ are obtained. Finally, in Section~\ref{sec:proofs}\CR{,}  Theorem~\ref{thm:main} and Corollaries~\ref{cor:fixed-point}-\ref{cor:non-differentiable} are established.

\section{Integrability of the Fourier transform}\label{sec:int_fourier}

The purpose of this section is to provide a proof of Proposition~\ref{prop:int_fourier}. We first outline the main steps as Lemmas~\ref{lem:part_eval}-\ref{lem:alku} and  then we establish~\eqref{eq:bound_fourier}. The proofs of the lemmas are carried out in Section~\ref{sec:lemmas_proofs}.
 
We use the following normalization for the Fourier transform
\begin{align}
\notag \widehat{f}(\xi) \coloneqq \int_{\mathbb{R}} e^{- i \xi x} f(x) dx, 
\end{align}
\noindent for $\xi \in \mathbb{R}$ and $f \in C_0^{\infty}(\mathbb{R})$. The norm in the weighted space $L^2_G$ is defined as $\|f\|_{L^2_G}^2:=\int_\R |f(x)|^2G(x)dx.$

First, we obtain a representation of the left-hand side of~\eqref{eq:bound_fourier} using the eigenvalues of an integral operator. 
\begin{lemma}\label{lem:part_eval} Let $t \in \R_+^n$, $\xi \in \mathbb{R}^n$, and let $A_{t, \xi} : L^2_G(\mathbb{R}) \to L^2_G(\mathbb{R})$, be the operator given by 
\begin{align}
\notag (A_{t, \xi} f)(x) = \int_{\mathbb{R}}  \sum_{j = 1}^n  \xi_j \frac{e^{i t_j (x - y) } -1 }{i (x - y)} f(y) |y|^{-\CR{H}}dy 
\end{align}
Let $(\lambda_k)_{k \geq 1}$ be \CR{the sequence of the singular values}\footnote{\CR{Note that for a self-adjoint operator $T$  the singular value sequence consists of absolute values of the eigenvalues, repeated according to multiplicity. For later  purposes it is useful to speak of singular values since some of the results we will be using in Section \ref{sec:spectral} are valid only for singular values.}}
 of $A_{t, \xi}$. Then, 
\begin{align}
\notag \left| \mathbb{E} \exp \left( i \sum_{j = 1}^n \xi_j Z_{t_j} \right) \right|  = \prod_{k  \geq 1} \frac{1}{( 1 + 4 \lambda_k\CR{^2})^{1/4}}.
\end{align}
\end{lemma}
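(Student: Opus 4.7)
The plan is to express $W := \sum_{j=1}^n \xi_j Z_{t_j}$ as a single element of the second Wiener chaos, apply the spectral decomposition to represent it as a weighted sum of centred chi-squared variables with weights $\mu_k$, and then compute the modulus of the characteristic function of that sum by a direct factor-by-factor calculation.

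First, using the linearity of the double Wiener-It\^o integral and the representation~\eqref{eq:rosen_rep_kernel}, one writes
\begin{align*}
W = I_2(K_{t,\xi}), \qquad K_{t,\xi}(x,y) := \sum_{j=1}^n \xi_j H_{t_j}(x,y) = \sum_{j=1}^n \xi_j \frac{e^{it_j(x+y)}-1}{i(x+y)}.
\end{align*}
Because $\xi\in\R^n$, the kernel $K_{t,\xi}$ inherits from each $H_{t_j}$ the symmetries $K_{t,\xi}(x,y)=K_{t,\xi}(y,x)$ and $K_{t,\xi}(x,y)=\overline{K_{t,\xi}(-x,-y)}$ (ensuring $W$ is real-valued), and it lies in $L^2(G\otimes G)$ since each $H_{t_j}$ does. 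Invoking the spectral theorem for Hermitian-symmetric kernels against complex Gaussian white noise (as in~\cite{Dobrushin-Major-1979}) yields
\begin{align*}
W \stackrel{d}{=} \sum_{k\geq 1} \mu_k (X_k^2-1),
\end{align*}
where $(X_k)_{k\geq 1}$ are i.i.d.\ $N(0,1)$ and $(\mu_k)_{k\geq 1}$ are the real eigenvalues (with multiplicity) of the Hilbert-Schmidt operator on $L^2_G(\R)$ with kernel $K_{t,\xi}(x,-y)$, which is exactly $A_{t,\xi}$. A short check (using $\xi_j\in\R$ together with $H_{t_j}(x,y)=H_{t_j}(y,x)=\overline{H_{t_j}(-x,-y)}$) shows that $A_{t,\xi}$ is self-adjoint on $L^2_G$; in particular its singular values satisfy $\lambda_k=|\mu_k|$, and $\sum_k \mu_k^2 = \|A_{t,\xi}\|_{HS}^2 < \infty$ guarantees absolute convergence of all series and products below.

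For the final step, recall that for $X\sim N(0,1)$ one has $\E[e^{i\theta(X^2-1)}]=e^{-i\theta}(1-2i\theta)^{-1/2}$ (principal branch). Independence of the $X_k$ then gives
\begin{align*}
\E\bigl[e^{iW}\bigr] = \prod_{k\geq 1}\frac{e^{-i\mu_k}}{(1-2i\mu_k)^{1/2}},
\end{align*}
and, since $|1-2i\mu_k| = (1+4\mu_k^2)^{1/2} = (1+4\lambda_k^2)^{1/2}$, taking moduli produces precisely the stated product $\prod_{k\geq 1}(1+4\lambda_k^2)^{-1/4}$.

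The main obstacle is the clean justification of the spectral-theoretic step in the complex, Hermitian white-noise setting, and in particular the identification that the operator whose eigenvalues drive the chaos expansion is $A_{t,\xi}$ (featuring the sign-flip $y\mapsto -y$) rather than the direct integral operator with kernel $K_{t,\xi}$. Once this matching of conventions is carefully performed, the remainder is a routine characteristic-function computation, and the passage from eigenvalues to singular values at the end is cost-free because the right-hand side depends only on $\mu_k^2=\lambda_k^2$.
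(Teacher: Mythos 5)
Your proposal is correct and follows essentially the same route as the paper: write $\sum_j \xi_j Z_{t_j}$ as a single double Wiener--It\^o integral with kernel $\sum_j \xi_j H_{t_j}$, apply the Dobrushin--Major spectral decomposition to obtain $\sum_k \mu_k(X_k^2-1)$ with $\mu_k$ the eigenvalues of $A_{t,\xi}$, and then compute the modulus of the characteristic function factor by factor via $\E[e^{i\theta(X^2-1)}]=e^{-i\theta}(1-2i\theta)^{-1/2}$. Your extra remarks on self-adjointness and the eigenvalue/singular-value identification are a welcome clarification but do not change the argument.
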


\noindent Next, instead of studying the properties of $\lambda_k$, defined as in Lemma~\ref{lem:part_eval}, we introduce an operator that is unitarily equivalent to $A_{t, \xi}$, and obtain estimates for its singular values. 

\begin{lemma}\label{lem:part_new_operator} The operator $A_{t, \xi}$ is unitarily equivalent to 
\begin{align}
\label{eq:b_operator} B_{t, \xi} \coloneqq c(H) K_{H/2} M_g K_{H/2}\; \CR{: \L L^2(\R)\to L^2(\R)}
\end{align}
where $g(x) = \sum_{j = 1}^n \xi_j \chi_{[0, t_j]}(x)$, $M_g$ is the multiplication operator $(M_g f) (x) \coloneqq g(x) f(x)$ and $K_{\alpha}$ is a convolution operator defined \CR{via the Fourier transform}:
\begin{align}
\notag (\widehat{K_{\alpha} f}) (x) \coloneqq |x|^{- \alpha} \widehat{f}(x),
\end{align} 
\noindent for $\alpha \in (-1/2, 1/2)$.
Furthermore, if  $t_0 = 0 < t_1 < \cdots < t_n  \leq 1$, \CR{and we set $\xi_0=0$}, the $n$th singular value $\mu_n$ of $B_{t, \xi}$ $($and then also of $A_{t, \xi})$ satisfies
\begin{align}
\label{eq:bound_svalue} \mu_n ( B_{t, \xi}) \CR{\; \geq \;} C(H) (\max_{1 \leq j \leq n} |\xi_j - \xi_{j-1} | | t_j - t_{j-1}|^H ) \tilde{\mu}_n^2, 
\end{align}
\noindent where $\tilde{\mu}_n\; \sim\; c(H)n^{- H/2}$, and $c(H), C(H) > 0$ are constants that only depends on $H$. 
\end{lemma}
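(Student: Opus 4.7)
The plan splits into two largely independent parts. For the unitary equivalence, the key identity is
\[
\frac{e^{i t_j(x-y)}-1}{i(x-y)} = \int_0^{t_j} e^{is(x-y)}\,ds,
\]
which rewrites the kernel of $A_{t,\xi}$ as $\int_\R g(s)\,e^{is(x-y)}\,ds = \widehat{g}(y-x)$ (up to the sign convention for $\mathcal{F}$), with $g(s) = \sum_{j=1}^n \xi_j\,\chi_{[0,t_j]}(s)$. Conjugating $A_{t,\xi}$ by the weight unitary $U\colon L^2_G(\R)\to L^2(\R)$, $Uf(x)=|x|^{-H/2}f(x)$, gives an integral operator on $L^2(\R)$ with kernel $|x|^{-H/2}\widehat{g}(y-x)|y|^{-H/2}$. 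Conjugating further by the Fourier transform and using the standard identities $\mathcal{F}M_h\mathcal{F}^{-1}=C_{\widehat{h}}$ (convolution) together with $\widehat{|\cdot|^{-H/2}} = c_H|\cdot|^{H/2-1}$, both weight factors turn into constant multiples of the Fourier multiplier $K_{H/2}$ (which is itself convolution by $c|x|^{H/2-1}$), while the convolution with $\widehat g$ turns into $M_g$. Tracking constants yields $(\mathcal{F}U)\,A_{t,\xi}\,(\mathcal{F}U)^{-1}=c(H)K_{H/2}M_gK_{H/2}=B_{t,\xi}$, which is the first claim.

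For the singular value bound, let $j_0$ index the interval achieving the max and set $I_*=[t_{j_0-1},t_{j_0}]$, $\Delta=\xi_{j_0}-\xi_{j_0-1}$, $\tau_*=t_{j_0}-t_{j_0-1}$. Two structural inputs are needed. First, \emph{scale covariance}: the $L^2$-dilation $U_\tau f(x)=\tau^{-1/2}f(x/\tau)$ satisfies $U_\tau K_{H/2}U_\tau^{-1}=\tau^{-H/2}K_{H/2}$ and $U_\tau M_g U_\tau^{-1}=M_{g(\cdot/\tau)}$, so conjugation rescales $B_{t,\xi}$ by $\tau^{-H}$ after replacing $g$ by $g(\cdot/\tau)$; in particular singular values transform homogeneously, and one can normalize $\tau_*=1$. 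Second, the \emph{reference asymptotics}: the operator $K_{H/2}\chi_{[0,1]}$ (equivalently, Riemann--Liouville fractional integration of order $H/2$ on $[0,1]$) has $n$-th singular value $\tilde\mu_n\sim c(H)n^{-H/2}$, a classical decay estimate for integral operators with weakly singular kernels.

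With these in hand, the remaining task is to show that $\mu_n(B_{t,\xi})$ inherits the order $|\Delta|\,\tau_*^H\,\tilde\mu_n^2$ coming from the dominant interval $I_*$. The natural route is Courant--Fischer: construct an $n$-dimensional subspace $W\subset L^2(\R)$ from the top $n$ singular vectors of the reference operator $K_{H/2}\chi_{I_*}K_{H/2}$ (translated and rescaled to $I_*$) and establish a lower bound of the form $\|B_{t,\xi}f\|\geq C(H)|\Delta|\,\tau_*^H\,\tilde\mu_n^2\,\|f\|$ for unit $f\in W$. On such test functions, whose $K_{H/2}$-images concentrate near $I_*$, the action of $M_g$ is dominated by the jump $\Delta$ across $\partial I_*$, while the values of $g$ on the other intervals produce only smoother (Fourier off-diagonal) perturbations.

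The \emph{main obstacle} is exactly this interference: $M_g$ carries nontrivial mass on the adjacent intervals, and a priori these pieces could cancel the dominant $\Delta$-contribution on the chosen subspace $W$. Isolating the dominant piece without losing too many singular values in the comparison will require singular value inequalities of Ky Fan or Horn--Mirsky type (which is precisely why the footnote insists on using singular values rather than eigenvalues here), combined with a harmonic-analytic localization argument showing that the $n$ largest singular values of $B_{t,\xi}$ are governed by the single interval of largest $|\xi_j-\xi_{j-1}|\,|t_j-t_{j-1}|^H$. This is the ``rather delicate spectral analysis of arbitrary linear combinations of integral operators'' advertised in the abstract, developed via the technical lemmas deferred to Section~\ref{sec:lemmas_proofs}.
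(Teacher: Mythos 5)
Your first part (the unitary equivalence) is essentially the paper's argument: conjugate by the weight isometry $f\mapsto |x|^{-H/2}f$ from $L^2_G(\R)$ to $L^2(\R)$ and then by the Fourier transform, so that the weights become the multiplier $K_{H/2}$ and convolution by $\widehat g$ becomes $M_g$; that part is fine.

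The singular value bound is where there is a genuine gap, and you in effect flag it yourself: you propose a Courant--Fischer argument on a test subspace $W$ built from singular vectors of the reference operator attached to the dominant interval $I_*$, then observe that the mass of $g$ on the other intervals could cancel the dominant contribution on $W$, and state that overcoming this ``will require'' Ky Fan/Horn--Mirsky type inequalities plus a harmonic-analytic localization argument---but no such argument is supplied, and that step is precisely the content of \eqref{eq:bound_svalue}. The paper's mechanism is concrete and different from what you sketch: Lemma~\ref{le:piecing} shows that for $\alpha\in(0,1/2)$ the operator $U_{\alpha,I}:=K_\alpha M_{\chi_I}K_{-\alpha}$ extends to a bounded operator on $L^2(\R)$ with norm bounded by a constant depending only on $\alpha$, \emph{uniformly in the interval} $I$; this is proved by expressing $M_{\chi_I}$ on the Fourier side through modulated Hilbert transforms and invoking the Hunt--Muckenhoupt--Wheeden weighted inequality, since $|\xi|^{-2\alpha}$ is an $A_2$ weight. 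With this intertwining, cancellation is no longer an issue: from $K_\alpha M_{\chi_{I_j}}M_gK_\alpha=U_{\alpha,I_j}\,K_\alpha M_gK_\alpha$ and the elementary minimax inequality $\mu_n(AB)\le\|A\|\,\mu_n(B)$ one gets $\mu_n(K_\alpha M_gK_\alpha)\ \ge\ \|U_{\alpha,I_j}\|^{-1}\mu_n\bigl(K_\alpha M_{\chi_{I_j}}M_gK_\alpha\bigr)$; since $g\chi_{I_j}$ is a constant multiple of $\chi_{I_j}$ (the constant being the relevant jump), this equals $|\xi_j-\xi_{j-1}|\,\mu_n(K_\alpha M_{\chi_{I_j}}K_\alpha)$ up to the uniform constant, and finally $\mu_n(K_\alpha M_{\chi_{I_j}}K_\alpha)\ge \mu_n\bigl((M_{\chi_{I_j}}K_\alpha M_{\chi_{I_j}})^2\bigr)=|t_j-t_{j-1}|^{2\alpha}\tilde\mu_n^2$ by $\|M_{\chi_{I_j}}\|=1$, self-adjointness, scaling, and Dostani\'c's asymptotics (Lemma~\ref{le:jelppi}), with $\alpha=H/2$. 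So the missing idea in your proposal is exactly the uniform $A_2$-based identity $K_\alpha M_{\chi_I}=U_{\alpha,I}K_\alpha$, which converts localization to the dominant interval into left multiplication by a uniformly bounded operator and thereby bypasses the interference problem that your test-subspace approach cannot resolve as stated; your scaling covariance and reference asymptotics are correct but do not close this step.
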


\noindent The next step is to provide bounds on integrals involving expressions like on the right-hand side of~\eqref{eq:bound_svalue}. First, the following result holds. 

\begin{lemma}\label{lem:int_svalue} Define for $ s \in \mathbb{R}$, the function
\begin{align}
\label{eq:gproduct} G(s) \coloneqq \prod_{k = 1}^{\infty} (1 + 4 s^2 \tilde{\mu}_k^4)^{-1/4}.
\end{align}
Then the product~\eqref{eq:gproduct} converges and $G(s) > 0$ for $s > 0$. \CR{Moreover, there is a constants $c_3=C_3(H)$}  such that, for all $\beta \geq 1$, 
\begin{align}
\label{eq:gbound} \int_0^{\infty} s^{\beta -1} G(s) ds \leq  c_3^{\beta H}\Gamma(\beta H),
\end{align}
\noindent where $\Gamma$ is the Gamma function.  
\end{lemma}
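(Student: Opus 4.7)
The plan is to handle the lemma in three stages: first dispatch convergence and positivity of the infinite product, then obtain a stretched-exponential tail bound for $G(s)$ of the form $G(s) \leq e^{-c(H)\, s^{1/H}}$, and finally translate this tail bound into the Gamma-integral estimate via a single substitution.

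\textbf{Convergence and positivity.} From $\tilde\mu_k \sim c(H) k^{-H/2}$ one has $\tilde\mu_k^4 \asymp k^{-2H}$, and since $H > 1/2$, the series $\sum_k \tilde\mu_k^4$ converges. Using $\log(1+x)\leq x$,
\[
-4\log G(s) \;=\; \sum_{k\geq 1}\log\bigl(1 + 4s^2\tilde\mu_k^4\bigr) \;\leq\; 4s^2\sum_{k\geq 1}\tilde\mu_k^4 \;<\;\infty,
\]
so the product converges to a strictly positive number for every $s$.

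\textbf{Lower bound on $-\log G(s)$.} For $s\geq 1$ choose $N = N(s) := \lfloor c_0\, s^{1/H}\rfloor$ with $c_0=c_0(H)>0$ small enough (which is possible by the asymptotics $\tilde\mu_k \sim c(H) k^{-H/2}$) to ensure $4s^2\tilde\mu_k^4 \geq 1$ for $1 \leq k \leq N$. Then each such term contributes at least $\log 2$, hence
\[
-4\log G(s) \;\geq\; N\log 2 \;\geq\; c_1(H)\, s^{1/H}, \qquad s \geq 1,
\]
for a suitable $c_1(H)>0$. Consequently $G(s) \leq \exp(-c_2(H)\, s^{1/H})$ on $[1,\infty)$, while the trivial bound $G(s)\leq 1$ suffices on $[0,1)$.

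\textbf{Evaluating the integral.} Splitting the domain at $s=1$, the piece $\int_0^1 s^{\beta-1}G(s)\,ds \leq 1/\beta \leq 1$ is harmless. For the tail, substitute $u = c_2(H)\, s^{1/H}$, giving $s = (u/c_2)^H$ and $ds = (H/c_2)(u/c_2)^{H-1}\,du$, whence
\[
\int_1^\infty s^{\beta-1}\,e^{-c_2 s^{1/H}}\,ds \;\leq\; H\, c_2(H)^{-H\beta}\int_0^\infty u^{H\beta-1}e^{-u}\,du \;=\; H\, c_2(H)^{-H\beta}\,\Gamma(H\beta).
\]
Collecting constants and using $\beta\geq 1$ to absorb the $\int_0^1$ piece into a factor of the form $c_3^{H\beta}$, the desired bound $\int_0^\infty s^{\beta-1}G(s)\,ds \leq c_3^{\beta H}\Gamma(\beta H)$ follows.

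The only delicate point is the extraction of the threshold $N(s) \asymp s^{1/H}$ from the asymptotic $\tilde\mu_k \sim c(H) k^{-H/2}$: one must verify that the implicit constant depends only on $H$ (and in particular not on $\beta$), so that the $\Gamma(\beta H)$ factor captures the full $\beta$-dependence of the estimate. Everything else is routine once the stretched-exponential decay is in hand.
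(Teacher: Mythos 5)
Your proof is correct and follows essentially the same route as the paper: both arguments reduce the lemma to the stretched-exponential bound $G(s)\lesssim \exp(-c(H)\,s^{1/H})$ and then obtain $c_3^{\beta H}\Gamma(\beta H)$ by a single change of variables, absorbing $H$-dependent constants using $\beta\geq 1$. The only (immaterial) difference is how the lower bound on $-\log G(s)$ is extracted: you count the $N(s)\asymp s^{1/H}$ terms with $4s^2\tilde\mu_k^4\geq 1$, each contributing at least $\log 2$ (which requires the uniform bound $4\tilde\mu_k^4\geq a k^{-2H}$, available since $\tilde\mu_k>0$ for every $k$), whereas the paper uses $\log(1+x)\geq x/2$ on the small-argument tail of the sum.
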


\noindent The next technical result gives an expression for an integral of a function similar to the maximum term appearing in~\eqref{eq:bound_svalue}. In particular, let $f_0: \R_+^n \times \R^n \to \R_+$ be given by
\begin{equation}\label{eq:special}
 f_0(t, y) \coloneqq t_1^H |y_1| \vee t_2^H |y_2|  \vee \cdots \vee t_n^H |y_n|.
\end{equation}
\noindent Then the following holds: 
\begin{lemma}\label{lem:alku}Assume that $\gamma_j \in [0, H^{-1} - 1)$ for each $j$ where $H \in (0, 1)$, and write $\gamma_{av} =n^{-1} \sum_{j =1}^n \gamma_j$. Then,
\begin{align}
\notag \int_{S^{n-1}} & \int_{\substack{ t_1 + \cdots + t_n \leq 1 \\ t_1, \ldots, t_n \geq 0}} \prod_{j =1}^n |y_j|^{\gamma_j} (f_0(t, y))^{-n ( 1 + \gamma_{av})} dt \mathcal{H}^{n-1} (dy)  \\
\label{eq:alku} = & \quad \frac{n^{1/2} (1 + \gamma_{av} ) \prod_{j =1}^n \left[ \frac{2}{1 + \gamma_j} \Gamma( 1 - H(1 + \gamma_j) ) \right]}{\Gamma(n (1 - H(1 + \gamma_{av})) + 1)},
\end{align} 
where $\mathcal{H}^{n-1}(dy)$ is \CR{the $(n-1)$-dimensional Hausdorff measure}.
\end{lemma}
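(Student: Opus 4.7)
The plan is to compute the integral by introducing a mediating integral on $\mathbb{R}^n$ that factorizes via Fubini and the classical Dirichlet--Liouville formula on the simplex, and then relating it back to the original spherical integral via polar coordinates in $y$ and the homogeneity of the integrand. The key observation is that $\Psi(t,y) := \prod_j |y_j|^{\gamma_j}\,f_0(t,y)^{-n(1+\gamma_{av})}$ is positively homogeneous in $y$ of degree $\sum_j \gamma_j - n(1+\gamma_{av}) = -n$, and that $\{f_0(t,y)\le 1\}$ factorizes as $\bigcap_{j=1}^n \{|y_j|\le t_j^{-H}\}$. This suggests introducing the auxiliary volume
\[
V(y)\ :=\ \mathrm{vol}\Bigl\{t\in\R_+^n:\ \textstyle\sum_j t_j\le 1,\ f_0(t,y)\le 1\Bigr\}\ =\ \int_{\mathrm{simp}}\prod_j\mathbf{1}\{|y_j|\le t_j^{-H}\}\,dt,
\]
and computing $J:=\int_{\R^n}\prod_j|y_j|^{\gamma_j}V(y)\,dy$ in two different ways.

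\textbf{First evaluation (Fubini/Dirichlet--Liouville).} Swap the order of integration in $J$. Since the indicator in $V(y)$ factorizes across $j$, the $y$-integral splits into a product of $n$ one-dimensional integrals, each equal to $\int_\R |y|^{\gamma_j}\mathbf{1}\{|y|\le t_j^{-H}\}\,dy = \tfrac{2}{1+\gamma_j}\,t_j^{-H(1+\gamma_j)}$. The remaining $t$-integral is the classical Dirichlet--Liouville integral, yielding
\[
\int_{\mathrm{simp}}\prod_j t_j^{\beta_j-1}\,dt\ =\ \frac{\prod_j\Gamma(\beta_j)}{\Gamma\!\bigl(1+\sum_j\beta_j\bigr)},\qquad \beta_j:=1-H(1+\gamma_j),
\]
where the standing assumption $\gamma_j<1/H-1$ ensures $\beta_j>0$ so every integral converges. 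This gives $J$ in explicit closed form matching exactly the product of gamma factors on the right-hand side of the claimed identity.

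\textbf{Second evaluation (polar coordinates in $y$).} Write $y=r\omega$ with $r>0$, $\omega\in S^{n-1}$, so $dy=r^{n-1}\,dr\,\mathcal{H}^{n-1}(d\omega)$. Since $V(r\omega)=\mathrm{vol}\{t\in\mathrm{simp}:f_0(t,\omega)\le r^{-1}\}$, Tonelli allows exchanging the $r$- and $t$-integrations; the inner $r$-integral evaluates to
\[
\int_0^{1/f_0(t,\omega)} r^{n(1+\gamma_{av})-1}\,dr\ =\ \frac{f_0(t,\omega)^{-n(1+\gamma_{av})}}{n(1+\gamma_{av})},
\]
so $J$ recovers the left-hand side of the claimed identity divided by an explicit prefactor. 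Equating the two evaluations of $J$ and multiplying through produces the asserted formula. The argument is essentially routine---nonnegativity makes every application of Tonelli automatic, convergence at the singular points is controlled by the hypothesis on the $\gamma_j$, and no genuine obstacle arises beyond careful bookkeeping. The only substantive ingredients are the Dirichlet--Liouville integral, the homogeneity observation, and the factorization $\{f_0\le u\}=\bigcap_j\{t_j^H|y_j|\le u\}$ of the level sets of $f_0$, which is what allows the $y$-integral to be separated in the first place.
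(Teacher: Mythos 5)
Your argument is correct and takes a genuinely different, more elementary route than the paper's. The paper proves the identity by first relating the solid-simplex integral to the integral over the face $\{t_1+\cdots+t_n=1\}$, then passing through the exponential-weight representation of Lemma~\ref{le:helppi10} (radial coordinates in both the $\ell^1$-variable $t$ and the Euclidean variable $y$), and finally computing $\int_{\R^n}e^{-\max_j|y_j|}\prod_j|y_j|^{\gamma_j}dy$ by a separate boundary-of-cube argument. Your two evaluations of $J=\int_{\R^n}\prod_j|y_j|^{\gamma_j}V(y)\,dy$ need only Tonelli (automatic by nonnegativity), the factorization $\{f_0(t,y)\le 1\}=\bigcap_j\{|y_j|\le t_j^{-H}\}$, and the Dirichlet--Liouville formula, with the hypothesis $\gamma_j<H^{-1}-1$ entering exactly where convergence is needed. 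This is a cleaner derivation of the same type of identity.

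There is, however, a discrepancy in the constant that you should not paper over: your computation gives
\begin{equation*}
I_0\;=\;\frac{n\,(1+\gamma_{\mathrm{av}})\prod_{j=1}^n\bigl[\tfrac{2}{1+\gamma_j}\Gamma(1-H(1+\gamma_j))\bigr]}{\Gamma\bigl(n(1-H(1+\gamma_{\mathrm{av}}))+1\bigr)},
\end{equation*}
i.e.\ a factor $n$ where \eqref{eq:alku} has $n^{1/2}$. This appears to be an error in the statement rather than in your proof. A direct check at $n=2$, $H=1/2$, $\gamma_1=\gamma_2=0$ gives $\int_{S^1}\int_{\{t_1+t_2\le 1,\,t_i\ge0\}}\max(t_1y_1^2,t_2y_2^2)^{-1}\,dt\,\mathcal{H}^1(dy)=8\pi$, matching your constant, whereas \eqref{eq:alku} gives $4\sqrt{2}\,\pi$. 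The source is the proof of Lemma~\ref{le:helppi10}: the change of variables $t=su$ with $s=t_1+\cdots+t_n$ and $u$ on the face $\{u_1+\cdots+u_n=1\}$ has Jacobian $dt=n^{-1/2}s^{n-1}\,ds\,\mathcal{H}^{n-1}(du)$ (test it on $e^{-\sum_j t_j}$: the face has $\mathcal{H}^{n-1}$-measure $\sqrt{n}/(n-1)!$), and this $n^{-1/2}$ is omitted in the first display of that proof, although the analogous factor is correctly included in the derivation of \eqref{eq:kaali}. Restoring it removes the $n^{1/2}$ from $C(n,\gamma,H)$ in \eqref{eq:jelppi100} and turns $n^{1/2}$ into $n$ in \eqref{eq:alku}, in agreement with your result. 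The difference is immaterial downstream, since in the proof of Proposition~\ref{prop:int_fourier} such a factor is absorbed into $C^n$, but your derivation yields what appears to be the correct constant, and with less machinery.
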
 

\noindent Now that the main technical steps are outlined, we present the proof of Proposition~\ref{prop:int_fourier}. 

\begin{proof}[Proof of Proposition~\ref{prop:int_fourier}]
Note first that\CR{,} by Lemma~\ref{lem:part_eval}\CR{,}
\begin{eqnarray*}
&&\int_{[u,U]^n}\int_{\R^n}\prod_{j=1}^n|\xi_j|^\eta\left\vert\E \exp\left(i\sum_{j=1}^n \xi_jZ_{t_j}\right)\right\vert d\xi dt \\
&=&\int_{t\in [u,U]^n}\int_{\xi\in\R^n}\prod_{j=1}^n |\xi_j|^{\eta}\prod_{k=1}^\infty \big(1+4\mu_k(A_{t,\xi})^2)\big)^{-1/4}d\xi dt \;=:\; I.
\end{eqnarray*}
We first perform a reduction to the case $u=0$. For that purpose  recall that Lemma \ref{lem:part_new_operator} verifies that the operator $A_{t,\xi}$ is unitarily equivalent to the operator $c(H/2)\sum_{j=1}^n\xi_jK_{H/2} M_{\chi_{[0,t_j]}}K_{H/2}  $. Next, assuming that $t_j\geq u$ for all $j\in\{1,\ldots,n\}$, Lemma \ref{le:piecing} yields that, for every $k \geq 1$,
\begin{eqnarray*}
&&\mu_k\big(\sum_{j=1}^n\xi_jK_{H/2}  M_{\chi_{[u,t_j]}}K_{H/2} \big) = \mu_k\big(U_{{H/2} ,[u,\infty)}\sum_{j=1}^n \xi_jK_{H/2}  M_{\chi_{[0,t_j]}}K_{H/2} \big) \\
&\leq& \co\mu_k(A_{t,\xi}),
\end{eqnarray*}
where we used the Minimax principle (see~\cite[Corollary III.1.2]{Bhatia-1997}) and $\co \coloneqq \|U_{H/2,[u,\infty)}\|$\CR{.} On the other hand, by the translation invariance of the Fourier-multipliers $K_\alpha$ we see that, for every $k \geq 1$,
$$
\mu_k\big(\sum_{j=1}^n \xi_jK_{H/2}  M_{\chi_{[u,t_j]}}K_{H/2} \big)= \mu_k\big(\sum_{j=1}^n\xi_jK_{H/2}  M_{\chi_{[0,t_j-u]}}K_{H/2} \big).
$$
Then, $\mu_k(A_{t,\xi})\geq \co^{-1}\mu_k(A_{t-ue,\xi})$, where  $ e = (1,\ldots ,1)$, and hence a change of variables in the integral $I$ yields
\begin{equation}\label{eq:draw}
I\leq \int _{t\in [U-u]^n}\int_{\xi\in\R^n}\prod_{j=1}^n |\xi_j|^{\eta}\prod_{k=1}^\infty \big(1+4A^{-1}\mu_k(A_{t,\xi})^2)\big)^{-1/4}d\xi dt :=I'.
\end{equation}
Above, $I'$ is our integral $I$ reduced to the case $u=0$ up to a constant $\co$ in the integrand.

We then assume that $u=0$ and consider the integral $I'$. By symmetry, $I'$ equals $n!$ times  the same integral restricted to the ordered set $\{0\leq t_1< \cdots <  t_n \leq U-u\}.$ Again, $A_{t,\xi}$ is unitarily equivalent to 
$K_{H/2} M_gK_{H/2},$  where 
$$
g=\sum_{j=1}^n \xi_j\chi_{[0,t_j]}=\sum_{j=1}^n\xi'_j\chi_{I_j},
$$
where $I_j=[t_{j-1},t_j]$ with $t_0:=0$ and $\xi'_j \coloneqq \sum_{\ell=j}^n\xi_j$, $j=1,\ldots ,n$. For notational purposes we set $\xi'_{n+1} = 0$ We also   make the change of  variables $t'_j=t_j-t_{j-1}$, $j=1,\ldots ,n$.   Both changes of variables have Jacobian equal to 1. According to~\eqref{eq:bound_svalue} of Lemma~\ref{lem:part_new_operator}  and Lemma~\ref{lem:int_svalue}: 
$$
I'\leq n!\int _{\substack{t'_1+\ldots +t'_n\leq U\\ t'_1,\ldots, t'_n >0}}\int_{\xi' \in\R^n}\prod_{j=1}^n |\xi'_j-\xi'_{j+1}|^{\eta}G(c'f_0(t',\xi'))dt'd\xi'
$$
where $c'>0$ (which also incorporates the constant $\co$) and  $f_0$ \CR{is as in \eqref{eq:special}}.

Next, note that for every $\eta\CR{\geq0}$, $|\xi'_j-\xi'_{j+1}|^{\eta}\leq 2^{(\eta-1)\vee 0}(|\xi'_j|^\eta+|\xi'_{j+1}|^\eta)$ and thus
$$
\prod_{j=1}^n |\xi'_j-\xi'_{j+1}|^{\eta}\leq c''^n \sum \prod |\xi'_j|^{\gamma_j},
$$ 
where $c'' \coloneqq 2^{(\eta-1)\vee 0}$ and the exponents in each term of the sum satisfy $\gamma_j\in \{0,\eta,2\eta\}$, and 
 $\gammav :=n^{-1}\sum_{j=1}^n \gamma_j=\eta.$ The number of summands is $2^{n-1}$, since $\xi'_{n+1}=0$. For any  fixed exponent sequence 
$(\gamma_1,\ldots ,\gamma_n)$ we switch to polar coordinates $|\xi'|=r'$, $\xi'/r=w'$:
\begin{eqnarray*}
&&\int _{\substack{t'_1+\ldots +t'_n\leq U\\ t'_1,\ldots, t'_n >0}}\int_{\xi' \in\R^n}\prod_{j=1}^n |\xi'_j|^{\gamma_j}G(c'f_0(t',\xi'))dt'd\xi'\\
&=&\int _{\substack{t'_1+\ldots +t'_n\leq U\\ t'_1,\ldots, t'_n >0}}\int_{|w'|=1}\int_0^\infty r'^{n-1}r'^{n\eta}\prod_{j=1}^n |w'_j|^{\gamma_j}G(c'r'f_0(t',w'))dr'\mathcal{H}^{n-1}(dw') dt'\\
&=&(c')^{-n(1+\eta)}\Big(\int_0^\infty R^{n(1+\eta)-1}G(R)dR\Big)\times\\
&&\phantom{kukkuukukkuu}\times\int _{\substack{t'_1+\ldots +t'_n\leq U\\ t'_1,\ldots, t'_n >0}}\int_{|w'|=1}\prod_{j=1}^n |w'_j|^{\gamma_j}(f_0(t',w'))^{-n(1+\eta)}\mathcal{H}^{n-1}(dw') dt',
\end{eqnarray*}
where  $R \coloneqq  c' r' f_0(t', w')$.  Apply  Lemma \ref{lem:int_svalue} to estimate the $R$-integral and set $t'= Uv$.  By the $H$-homogeneity of $f_0$ in the $t$-variable the previous integral is upper bounded by
\begin{eqnarray*}
&& c_3^{ n(1+\eta)H} (c')^{-n(1+\eta)}\Gamma(n(1+\eta)H)\times\\
&&\phantom{kukkuu}\times U^{(1-H(1+\eta))n}\int _{\substack{v_1+\ldots +v_n\leq 1\\ v_1,\ldots, v_n >0}}\int_{|w'|=1}\prod_{j=1}^n |w'_j|^{\gamma_j}(f_0(v,w'))^{-n(1+\eta)}\mathcal{H}^{n-1}(dw)dv
\end{eqnarray*}
for some constant $c_3=C_3(H)>0$. Next, by Lemma~\ref{lem:alku} the above is further bounded by
\begin{align*}
  &C(H,n) U^{(1-H(1+\eta))n}\Gamma(n(1+\eta)H)  \frac{n^{1/2}(1+\eta)\prod_{j=1}^n\Big[\frac{2}{1+\gamma_j}\Gamma(1-H(1+\gamma_j))\Big]}{(c')^{n(1+\eta)} \Gamma(n(1-H(1+\eta))+1)}\\
 \leq\;\; & C(H,n)  U^{(1-H(1+\eta))n}\Big(\frac{2}{c'^{1+\eta}}\Gamma (1-H (1+2\eta))\Big)^n\frac{\Gamma(nH(1+\eta)+1)}{n^{1/2}\Gamma(n(1-H(1+\eta))+1)},
\end{align*}
where $C(H,n) :=  (c_3/c')^{- n(1+\eta)H} $ and we used the fact that the Gamma function is decreasing on $(0,1)$ and $\gamma_j \leq 2 \eta$. Summing over the $2^{n-1}$ different  exponent sequences 
$(\gamma_1,\ldots ,\gamma_n)$  and  recalling the $n!$ factor  introduced in the beginning of the proof yields
\begin{align}
\notag I \leq  & C(H) U^{(1-H(1+\eta))n}\Big(\frac{4}{c'^{1+\eta}}\Gamma (1-H (1+2\eta))\Big)^n\frac{\Gamma(n+1)\Gamma(nH(1+\eta)+1)}{\Gamma(n(1-H(1+\eta))+1)} \\
\notag = & U^{(1-H(1+\eta))n} C^n \frac{\Gamma(n+1)\Gamma(nH(1+\eta)+1)}{\Gamma(n(1-H(1+\eta))+1)},
\end{align}
where $C > 0$ depends only on $H$ and $\eta$. 
 Finally, an application of Stirling's formula establishes~\eqref{eq:bound_fourier}:
\begin{align}
\notag I \leq    C^n n^{2nH(1 + \eta)} \CR{U}^{(1 - H(1 + \eta) ) n},
\end{align} 
for a different $C > 0$ depending only on $H$ and $\eta$ \CR{(recall that we reduced to the case $u=0$)}. 
\end{proof}

Proposition~\ref{prop:int_fourier} establishes integrability properties of the Fourier transform of the local time and leads to  good moment estimates for the local time in next section. In turn, Section \ref{sec:proofs} uses the moment estimates to deduce several important results regarding the asymptotic behavior of the local time. 

\section{Joint continuity of the local times \CR{and moment estimates}}\label{sec:moments}

\CR{In the  present  section we apply Proposition~\ref{prop:int_fourier} to produce moment estimates for the local time, that are of some independent interest.}

Let $t > 0$ and $x \in\R$. We recall from  \cite{Shevchenko-2010} that the local time  $L(x, t) \coloneqq L(x, [0,t])$ for the Rosenblatt process $Z$ exists and admits the representation\footnote{\CR{A priori, after  \cite{Shevchenko-2010}, the occupation density is defined only as an $L^2$-density, and hence they are not necessarily well-defined for fixed $x$. Thus some of our computations below might seem unfounded. However, one may use Proposition \ref{prop:int_fourier} to first prove uniform regularity bounds for suitable mollifications, which justifies \eqref{eq:local_rep} for any fixed $x$ and our computations later on.}}
\begin{align}
\label{eq:local_rep} L(x, t) = \frac{1}{2 \pi} \int_\R \int_0^t e^{i \xi (x - Z_s) } ds d\xi.
\end{align}

Our next step is to show that $L(x, t)$ is H\"older-continuous both in time and space, and also to establish bounds on its moments.

\begin{theorem}\label{thm:moment0-bound}
For every $0 \leq s < t$ and $x \in \R$, 
\begin{equation}\label{eq:mom3}
\E |L(x,t)-L(x,s)|^n \leq c^n n^{n\cexpf}|t-s|^{(1-H)n}.
\end{equation}
Moreover, for any $0\leq \gamma<\frac{1-H}{2H}$ and \CR{$y \in \R$}, we have
\begin{equation}\label{eq:mom4}
\left| \E (L(x+y,[s,t])-L(x,[s,t]))^{n} \right| \leq c^n n^{n \cexps}|t-s|^{(1-H-\gamma H)n}|y|^{\gamma n}. 
\end{equation}
In both inequalities the constant $c$ depends only on $\gamma$ and $H$.
\end{theorem}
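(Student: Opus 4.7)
The plan is to follow the Berman-style Fourier approach, using the representation \eqref{eq:local_rep} to express the moments of the increments of $L$ as integrals of the characteristic function of $(Z_{r_1},\ldots,Z_{r_n})$, and then invoking Proposition~\ref{prop:int_fourier} with a suitable exponent $\eta$.

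For the temporal estimate \eqref{eq:mom3}, observe that, since $L(x,\cdot)$ is a non-negative measure, $L(x,t)-L(x,s)=L(x,[s,t])$, so by \eqref{eq:local_rep} (applied to the interval $[s,t]$) and Fubini,
\begin{equation*}
\E\bigl[L(x,[s,t])^n\bigr]
=\frac{1}{(2\pi)^n}\int_{[s,t]^n}\!\int_{\R^n} e^{i\sum_j \xi_j x}\,\E\exp\Bigl(-i\sum_{j=1}^n \xi_j Z_{r_j}\Bigr)\,d\xi\,dr.
\end{equation*}
Taking absolute values inside and applying Proposition~\ref{prop:int_fourier} with $\eta=0$ and $(u,U)=(s,t)$ gives directly
$\E[L(x,[s,t])^n]\le (2\pi)^{-n}C^n n^{2nH}(t-s)^{(1-H)n}$, which is \eqref{eq:mom3}.

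For the spatial increment \eqref{eq:mom4}, use
\begin{equation*}
L(x+y,[s,t])-L(x,[s,t])
=\frac{1}{2\pi}\int_{\R}\!\int_{s}^{t} e^{i\xi(x-Z_r)}\bigl(e^{i\xi y}-1\bigr)\,dr\,d\xi,
\end{equation*}
so that the $n$-th moment equals $(2\pi)^{-n}$ times the obvious $n$-fold integral. Estimating $|e^{i\xi_j y}-1|\le 2^{1-\gamma}|\xi_j|^{\gamma}|y|^{\gamma}$ for $\gamma\in[0,1]$ and taking absolute values under the integral yields
\begin{equation*}
\bigl|\E(L(x+y,[s,t])-L(x,[s,t]))^n\bigr|
\le \frac{(2^{1-\gamma}|y|^{\gamma})^n}{(2\pi)^n}
\int_{[s,t]^n}\!\int_{\R^n}\prod_{j=1}^n|\xi_j|^{\gamma}\,\Bigl|\E\exp\bigl(i\sum_j \xi_j Z_{r_j}\bigr)\Bigr|\,d\xi\,dr.
\end{equation*}
Since $0\le\gamma<(1-H)/(2H)$, the hypothesis of Proposition~\ref{prop:int_fourier} with $\eta=\gamma$ holds, and applying it produces the bound $c^n n^{2nH(1+\gamma)}(t-s)^{(1-H(1+\gamma))n}|y|^{\gamma n}$, which is precisely \eqref{eq:mom4}.

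The only non-routine point is justifying that the Fourier identity \eqref{eq:local_rep} may be evaluated at a \emph{fixed} $x$ (and, for \eqref{eq:mom4}, at fixed $x$ and $x+y$), so that the above expressions for the moments are legitimate; as noted in the footnote preceding Theorem~\ref{thm:moment0-bound}, \cite{Shevchenko-2010} produces $L(x,\cdot)$ only as an $L^2_x$-density. The standard remedy is to approximate $L(x,[s,t])$ by mollifications $L_\varepsilon(x,[s,t]):=\frac{1}{2\pi}\int_{\R}\phi(\varepsilon\xi)\int_{s}^{t}e^{i\xi(x-Z_r)}\,dr\,d\xi$ with a smooth cutoff $\phi$ satisfying $\phi(0)=1$, establish the bounds \eqref{eq:mom3}--\eqref{eq:mom4} for $L_\varepsilon$ uniformly in $\varepsilon$ (the argument above goes through verbatim, since $|\phi|\le 1$), and then pass to the limit $\varepsilon\to 0$ by a Kolmogorov-type continuity argument in $x$ applied to $L_\varepsilon$. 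Once this is in place, the two displayed estimates follow verbatim from Proposition~\ref{prop:int_fourier}. I expect this approximation step to be the only technical obstacle; the rest is a transparent application of the main integrability estimate.
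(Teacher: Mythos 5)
Your proposal is correct and follows essentially the same route as the paper: rewrite the moments via the Fourier representation \eqref{eq:local_rep}, bound the spatial factor by $|e^{i\xi y}-1|\le c\,|\xi|^{\gamma}|y|^{\gamma}$, and apply Proposition~\ref{prop:int_fourier} with $\eta=0$ (temporal) and $\eta=\gamma$ (spatial). The mollification point you raise is exactly how the paper justifies \eqref{eq:local_rep} at fixed $x$ (see its footnote), so nothing is missing.
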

\begin{proof}
First, by~\eqref{eq:local_rep}, 
\begin{align}
\notag & \E \big(L (x+y,[s,t])-L(x,[s,t])\big)^{n} \\
\notag  = & (2 \pi)^{-n} \int_{\R^n} \int_{[s, t]^n} \left( \prod_{j =1}^n ( \exp ( i \xi_j (x+ y) ) - \exp( i \xi_j x) ) \right)\E \exp\left( - i \sum_{j = 1}^n \xi_j Z_{v_j} \right) dv d\xi  
\end{align}
Next, since $\gamma \in [0,1 )$ (recall that $H > 1/2$),  we have
\begin{align}
\notag & \prod_{j =1}^n | \exp ( i \xi_j y ) - 1 | \leq 2^n \prod_{j =1}^n (|y| |\xi_j| \wedge 1) \leq 2^n \prod_{j =1}^n (( |y| |\xi_j| )^{\gamma} \wedge 1) \leq 2^n |y|^{\gamma n} \prod_{j =1}^n |\xi_j|^\gamma,
\end{align}
where we have used that  $|e^{ix} - 1| \leq |x| \wedge 2 \leq 2 ( |x| \wedge 1)$, for all $x$. 
 Therefore, 
\begin{align}
\notag & \left|  \E \big(L (x+y,[s,t])-L(x,[s,t])\big)^{n} \right| \\ 
\notag \leq  & \pi^{-n} |y|^{\gamma n} \int_{\R^n} \int_{[s, t]^n} \prod_{j =1}^n |\xi_j|^\gamma  \left| \E \exp\left( - i \sum_{j = 1}^n \xi_j Z_{v_j} \right)\right| dv d\xi.  
\end{align} 
Now, Proposition~\ref{prop:int_fourier} with $\eta = \gamma$ yields: 
\begin{align}
\notag & \left| \E \big(L (x+y,[s,t])-L(x,[s,t])\big)^{n} \right| \leq C^n |y|^{\gamma n} (t - s)^{1 - H(1 + \gamma) n} n^{2 n H( 1 + \gamma) },
\end{align}  
where $C > 0$ is a function of $H$ and $\gamma$ and~\eqref{eq:mom4} is established.

Similarly, by~\eqref{eq:local_rep}, using $L(x, s) \leq L(x, t)$ for $0 \leq  s < t$, 
\begin{eqnarray*}
 &  & \E |L(x,t)-L(x,s)|^n \\
& = & \left((2\pi)^{-n}\int_{[s,t]^n}\int_{\R^n}\exp\left(ix\sum_{j=1}^n \xi_j\right)\E\exp\left(-i\sum_{j=1}^n \xi_j Z_{u_j}\right)d\xi du\right) \\
&\leq& (2\pi)^{-n}\int_{[s,t]^n}\int_{\R^n}\left\vert\E\exp\left(-i\sum_{j=1}^n \xi_j Z_{u_j}\right)\right\vert d\xi du\\
&\leq& \Big(\frac{C}{2\pi}\Big)^nn^{2nH}(t-s)^{(1-H)n},
\end{eqnarray*}
where the last inequality follows from Proposition~\ref{prop:int_fourier} with $\eta = 0$, and  $C > 0$ is a function of $H$ and $\gamma$.
\end{proof}

 As an immediate consequence of the above moment bounds and Kolmogorov criterion (see e.g.~\cite[Theorem 3.23]{ Kallenberg-2002}) we obtain:
 \begin{corollary}\label{cor:holder}
 Almost surely, the local time $L(x,t)$ is jointly H\"older continuous in $t$ and $x$.
 \end{corollary}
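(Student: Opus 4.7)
The plan is to deduce joint H\"older continuity from the moment bounds~\eqref{eq:mom3}--\eqref{eq:mom4} by applying a two-parameter Kolmogorov--Chentsov criterion to the random field $(x,t)\mapsto L(x,t)$ restricted to an arbitrary compact rectangle $[-M,M]\times[0,T]$. Since $L(x,t)$ is real-valued and non-decreasing in $t$, both estimates of Theorem~\ref{thm:moment0-bound} will be used as genuine $L^n$-bounds.

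First, I would reduce the joint increment to the one-parameter increments already estimated. Writing
$$
L(x_1,t_1)-L(x_2,t_2) = \bigl[L(x_1,t_1)-L(x_1,t_2)\bigr] + \bigl[L(x_1,t_2)-L(x_2,t_2)\bigr]
$$
and invoking $(a+b)^n\leq 2^{n-1}(a^n+b^n)$, I get for every even integer $n$
$$
\E|L(x_1,t_1)-L(x_2,t_2)|^n \leq 2^{n-1}\bigl(\E|L(x_1,t_1)-L(x_1,t_2)|^n + \E|L(x_1,t_2)-L(x_2,t_2)|^n\bigr).
$$
Because $L$ is real-valued and $n$ is even, the signed $n$th moment appearing on the left of~\eqref{eq:mom4} coincides with $\E|L(x_1,t_2)-L(x_2,t_2)|^n$. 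Fixing any $\gamma\in\bigl(0,(1-H)/(2H)\bigr)$ and combining~\eqref{eq:mom3} with~\eqref{eq:mom4}, I obtain
$$
\E|L(x_1,t_1)-L(x_2,t_2)|^n \leq C_{n,T}\bigl(|t_1-t_2|^{(1-H)n} + |x_1-x_2|^{\gamma n}\bigr)
$$
uniformly in $(x_i,t_i)\in[-M,M]\times[0,T]$.

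Next, I would pick $n$ even and so large that $(1-H)n>2$ and $\gamma n>2$. On the bounded parameter set the right-hand side is then dominated by a constant multiple of $\bigl\|(x_1,t_1)-(x_2,t_2)\bigr\|^{2+\varepsilon}$ for some $\varepsilon>0$ depending on $n$, $H$ and $\gamma$. The two-parameter Kolmogorov--Chentsov criterion (e.g.\ \cite[Theorem 3.23]{Kallenberg-2002}) then produces a modification of $L$ that is jointly H\"older continuous on $[-M,M]\times[0,T]$ with any exponent strictly less than $\min\{(1-H) - 2/n,\, \gamma - 2/n\}$ in the corresponding variable. Sending $n\to\infty$ and then $M,T\to\infty$ along a countable sequence, and using the footnote following~\eqref{eq:local_rep} to identify this modification with the canonical pointwise local time, would conclude the proof and yield almost-sure joint H\"older continuity with spatial exponent up to $(1-H)/(2H)$ and temporal exponent up to $1-H$.

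The argument is essentially routine once Theorem~\ref{thm:moment0-bound} is available; the only mild subtlety is the restriction to even $n$, needed to pass from the signed moment controlled in~\eqref{eq:mom4} to the absolute moment required by Kolmogorov's criterion. This poses no difficulty since $n$ may be chosen arbitrarily large, and is the only place where the real-valuedness of $L$ is invoked.
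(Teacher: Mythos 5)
Your argument is exactly the paper's route: the paper derives Corollary~\ref{cor:holder} directly from the moment bounds \eqref{eq:mom3}--\eqref{eq:mom4} of Theorem~\ref{thm:moment0-bound} via the Kolmogorov criterion (\cite[Theorem 3.23]{Kallenberg-2002}), which is precisely what you carry out, just with the standard details (even $n$, splitting the joint increment, localization to a compact rectangle) written out. So the proposal is correct and essentially identical in approach to the paper's proof.
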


The next theorem is a modification of Theorem~\ref{thm:moment0-bound}, where one shifts the process in the $x$-direction by the value $Z_a$, where $a$ is a fixed point.

\begin{theorem}
\label{thm:moment}
\CR{Let $s<t$ and let $a > 0$ satisfy $a \leq s$ or $a\geq t$}. Then,
\begin{equation}\label{eq:mom3-shift}
\E |L(x+Z_a,t)-L(x+Z_a,s)|^n \leq c^n (n)^{n\cexpf}|t-s|^{(1-H)n}.
\end{equation}
Moreover, for any $0\leq \gamma<(H^{-1}-1)/2$,
\begin{equation}\label{eq:mom4-shift}
\left| \E (L(x+y+Z_a,[s,t])-L(x+Z_a,[s,t]))^{n} \right| \leq c^n (n)^{n\cexps}|t-s|^{(1-H-\gamma H)n}|y|^{\gamma n}.
\end{equation}
In both cases the constant $c > 0$ depends only on $\gamma$ and $H$.
\end{theorem}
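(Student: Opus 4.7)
The approach parallels the proof of Theorem~\ref{thm:moment0-bound}, with the random shift $Z_a$ handled through operator-theoretic invariance. The hypothesis $a\notin(s,t)$ is the crucial ingredient, letting us reduce the shifted problem to an unshifted one on an interval of length $t-s$.

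Starting from~\eqref{eq:local_rep}, both moments can be expanded in Fourier form, and the only new feature relative to Theorem~\ref{thm:moment0-bound} is that $\E\exp(-i\sum_j\xi_j Z_{u_j})$ is replaced by $\E\exp(i\sum_j\xi_j(Z_a-Z_{u_j}))$; the elementary bound $\prod_j|e^{i\xi_j y}-1|\leq 2^n|y|^{\gamma n}\prod_j|\xi_j|^{\gamma}$ used in Theorem~\ref{thm:moment0-bound} carries over unchanged and will yield~\eqref{eq:mom4-shift}. Both cases thus reduce to estimating
$$J := \int_{[s,t]^n}\int_{\R^n} \prod_{j=1}^n |\xi_j|^{\eta}\,\Bigl|\E\exp\Bigl(i\sum_{j=1}^n\xi_j(Z_a-Z_{u_j})\Bigr)\Bigr|\,d\xi\,du,$$
with $\eta=0$ for~\eqref{eq:mom3-shift} and $\eta=\gamma$ for~\eqref{eq:mom4-shift}, respectively.

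Next, $\sum_j\xi_j(Z_a-Z_{u_j})$ is a second-chaos element, so Lemmas~\ref{lem:part_eval}--\ref{lem:part_new_operator} (applied to the $n+1$ time points $a,u_1,\ldots,u_n$) give
$$\Bigl|\E\exp\Bigl(i\sum_j\xi_j(Z_a-Z_{u_j})\Bigr)\Bigr|=\prod_{k\geq 1}(1+4\mu_k^2)^{-1/4},$$
where the $\mu_k$ are the singular values of an operator unitarily equivalent to $c(H)K_{H/2}M_g K_{H/2}$ with
$$g(x)=\sum_{j=1}^n \xi_j\bigl(\chi_{[0,a]}(x)-\chi_{[0,u_j]}(x)\bigr).$$
The assumption $a\leq s$ or $a\geq t$ now makes $g$ collapse to $\mp\sum_j\xi_j\chi_{[a\wedge u_j,\,a\vee u_j]}(x)$, i.e.\ a signed sum of characteristic functions of intervals all sharing the endpoint $a$. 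Since $K_{H/2}$ is the Fourier multiplier with the even symbol $|x|^{-H/2}$, it commutes with translations and with the reflection $x\mapsto-x$ of $\R$; therefore $M_g$ has the same singular value sequence as $M_{\tilde g}$, where $\tilde g(x):=\sum_j\xi_j\chi_{[0,v_j]}(x)$ and $v_j:=|u_j-a|$.

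After this reduction, $J$ equals the integral appearing in Proposition~\ref{prop:int_fourier}, but with the time parameters $v_j$ ranging over an interval of length $t-s$ (namely $[s-a,t-a]\subset[0,\infty)$ if $a\leq s$, and $[a-t,a-s]\subset[0,\infty)$ if $a\geq t$). Since the bound in Proposition~\ref{prop:int_fourier} depends only on this length---the very first step of its proof is precisely a translation to $u=0$---we obtain $J \leq C^n n^{2nH(1+\eta)}(t-s)^{(1-H(1+\eta))n}$, from which~\eqref{eq:mom3-shift} and~\eqref{eq:mom4-shift} follow. The main obstacle is the invariance argument of the previous paragraph: identifying that $a\notin(s,t)$ is exactly the condition needed for $g$ to factor as a signed sum of characteristic functions with common endpoint $a$, after which the even symbol of $K_{H/2}$ absorbs the shift and hands the problem back to Proposition~\ref{prop:int_fourier}.
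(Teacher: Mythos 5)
Your proposal is correct, and it reaches the same endpoint as the paper (reduce the shifted moments to the integral of Proposition~\ref{prop:int_fourier} over an interval of length $t-s$), but the key reduction is carried out by a genuinely different mechanism. The paper works probabilistically: it identifies $L(x+Z_a,\cdot)$ with the local time of $Y_t=Z_t-Z_a$ and then invokes stationarity of increments to replace $\E\exp\bigl(i\sum_j\xi_j(Z_{u_j}-Z_a)\bigr)$ by $\E\exp\bigl(i\sum_j\xi_j Z_{u_j-a}\bigr)$ (resp.\ $Z_{a-u_j}$ when $a\geq t$), after which Proposition~\ref{prop:int_fourier} is applied as a black box on $[s-a,t-a]$ or $[a-t,a-s]$. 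You instead stay at the operator level: you apply Lemmas~\ref{lem:part_eval}--\ref{lem:part_new_operator} to the $n+1$ points $a,u_1,\ldots,u_n$, observe that $a\notin(s,t)$ makes the multiplier $g$ a signed sum of indicators of intervals with common endpoint $a$, and use that the even symbol $|x|^{-H/2}$ commutes with translations and the reflection, so the singular values coincide with those for the times $v_j=|u_j-a|$. This buys a small but real advantage: for the case $a\geq t$ the paper's appeal to ``stationarity of increments'' is really a time-reversal property of $Z$, which your reflection argument establishes transparently, whereas the paper's route is shorter and avoids re-entering the spectral machinery. Two cosmetic points: the invariance claim should be stated for the conjugated operators $K_{H/2}M_gK_{H/2}$ and $K_{H/2}M_{\tilde g}K_{H/2}$ rather than for $M_g$ itself (your intent is clear from context), and for \eqref{eq:mom3-shift} you should note, as the paper does, that $L(x+Z_a,s)\leq L(x+Z_a,t)$ so the $n$-th absolute moment can be written without the absolute value before expanding in Fourier form.
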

\begin{proof}
Let $Y_t = Z_t - Z_a$. The occupation measure of $Y$ is just the occupation measure of $Z_t$ translated by the (random) constant $Z_a$. Since the occupation measure of $Z_t$ has a continuous density, the occupation measure of $Y_t$ has also a continuous density given by
$L_Y(t,x) = L_Z(t,x+Z_a)$. Thus, in order to prove the claim, it suffices to show the estimates for $L_Y(t,x)$. For the first claim, we then proceed as before, noting that, again, $L_Y(x, s) \leq L_Y(x, t)$,
\begin{eqnarray*}
&&\E |L_Y(x,t)-L_Y(x,s)|^n \\&= &(2\pi)^{-n}\int_{[s,t]^n}\int_{\R^n}\exp\left(ix\sum_{j=1}^n \xi_j\right)\E\exp\left(i\sum_{j=1}^n \xi_j Y_{u_j}\right)d\xi du\\
&\leq& (2\pi)^{-n} \int_{[s,t]^n}\int_{\R^n}\left\vert\E\exp\left(i\sum_{j=1}^n y_j Y_{u_j}\right)\right\vert dydu.
\end{eqnarray*}
\CR{Let first $a\leq s$.} By stationarity of the increments, we have 
\begin{equation*}
\begin{split}
&\E\exp\left(i\sum_{j=1}^n y_j Y_{u_j}\right)
\;=\; \E\exp\left(i\sum_{j=1}^n y_j (Z_{u_j}-Z_a)\right)
\;=\;\E\exp\left(i\sum_{j=1}^n y_j Z_{u_j-a}\right).
\end{split}
\end{equation*}
Thus change of variable $v_j=u_j-a$ gives 
$$
\int_{[s,t]^n}\int_{\R^n}\left\vert\E\exp\left(i\sum_{j=1}^n y_j Y_{u_j}\right)\right\vert dydu = \int_{[s-a,t-a]^n}\int_{\R^n}\left\vert\E\exp\left(i\sum_{j=1}^n y_j Z_{v_j}\right)\right\vert dydv
$$
from which the claim follows by Proposition \ref{prop:int_fourier} just as in the proof of Theorem \ref{thm:moment0-bound}  with $\eta = 0$, and  $C > 0$ is a function of $H$ and $\gamma$. \CR{Similarly, for $a\geq t$ stationarity of increments imply
\begin{equation*}
\begin{split}
&\E\exp\left(i\sum_{j=1}^n y_j Y_{u_j}\right)
\;=\; \E\exp\left(-i\sum_{j=1}^n y_j (Z_a-Z_{u_j})\right)
\;=\;\E\exp\left(-i\sum_{j=1}^n y_j Z_{a-u_j}\right).
\end{split}
\end{equation*}
This can be treated exactly the same way by using first change of variable $v_j = a-u_j$. This concludes the proof.}
\end{proof}
The moment bounds obtained above translate  into the following tail estimates. Their proof is a standard application of Chebychev's inequality, and hence we omit the proof.
\begin{corollary}\label{cor:tail1} {\rm (i)}\quad  For any finite closed interval $I\subset (0, \infty)$, 
\begin{equation}
\label{eq:tail-L0}
\mathbb{P}(L(x,I)\geq |I|^{1-H}u^{\cexpf}) \leq C_1\exp(-c_1u)
\end{equation}
and
\begin{equation}
\label{eq:tail-inc0}
\mathbb{P}(|L(x,I)-L(y,I)|\geq  |I|^{1-H-\gamma H}|x-y|^{\gamma}u^{\cexps}) \leq C_2\exp(-c_2u).
\end{equation}

\medskip

\noindent {\rm (ii)}\quad \CR{For $I = [a,a+r]$ or $I=[a-r,a]$, we have}
\begin{equation}
\label{eq:tail-L}
\mathbb{P}(L(x+Z_a,I)\geq r^{1-H}u^{\cexpf}) \leq C_1\exp(-c_1u)
\end{equation}
and
\begin{equation}
\label{eq:tail-inc}
\mathbb{P}(|L(x+Z_a,I)-L(y+Z_a,I)|\geq  r^{1-H-\gamma H}|x-y|^{\gamma}u^{\cexps}) \leq C_2\exp(-c_2u).
\end{equation}
\end{corollary}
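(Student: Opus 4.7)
The plan is the classical Chebyshev-plus-optimization device that turns a moment bound of the form $\E|X|^n\leq c^n n^{\alpha n} M^n$ (polynomial-in-$n$ growth with exponent $\alpha$) into a genuine exponential tail estimate $\mathbb{P}(|X|\geq Mu^\alpha)\leq C e^{-c'u}$. All four inequalities of the corollary fit this template: in each case $\alpha$ equals either $\cexpf$ or $\cexps$, and $M$ is the geometric factor $|I|^{1-H}$ or $|I|^{1-H-\gamma H}|x-y|^{\gamma}$ that has already been isolated in Theorems~\ref{thm:moment0-bound} and~\ref{thm:moment}.

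For \eqref{eq:tail-L0}, writing $I=[s,t]$ and $L(x,I)=L(x,t)-L(x,s)$, Markov's inequality combined with \eqref{eq:mom3} gives, for every integer $n\geq 1$,
\[
\mathbb{P}\bigl(L(x,I)\geq |I|^{1-H}u^{\cexpf}\bigr)\;\leq\;\bigl(|I|^{1-H}u^{2H}\bigr)^{-n}\E L(x,I)^n\;\leq\;\Bigl(\frac{c\,n^{2H}}{u^{2H}}\Bigr)^n.
\]
Taking $n=\lfloor\kappa u\rfloor$ with $\kappa>0$ small enough (depending only on $c$ and $H$) that $c\kappa^{2H}\leq e^{-1}$, the base is at most $e^{-1}$, so the whole quantity is bounded by $e^{-n}\leq e\cdot e^{-\kappa u}$. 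The range of small $u$ is absorbed by enlarging $C_1$. Inequality \eqref{eq:tail-inc0} follows from \eqref{eq:mom4} in exactly the same way: Markov at the threshold $|I|^{1-H-\gamma H}|x-y|^{\gamma}u^{\cexps}$ produces the ratio $(c n^{2H(1+\gamma)}/u^{2H(1+\gamma)})^n$, and the same choice $n\asymp u$ yields exponential decay.

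Part (ii) is formally identical to part (i). The hypotheses $I=[a,a+r]$ and $I=[a-r,a]$ are precisely the regimes $a\leq s$ and $a\geq t$ in which Theorem~\ref{thm:moment} is stated, so \eqref{eq:mom3-shift} and \eqref{eq:mom4-shift} play the role of \eqref{eq:mom3} and \eqref{eq:mom4} with $|I|=r$, and the same Markov-and-optimize scheme delivers \eqref{eq:tail-L} and \eqref{eq:tail-inc}.

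As the author flags, no step in this argument is genuinely difficult; the only observation worth making is that the $n$-optimization works precisely because Proposition~\ref{prop:int_fourier}---and hence the moment estimates of Theorems~\ref{thm:moment0-bound} and~\ref{thm:moment}---grow in $n$ like $n^{n\alpha}$ rather than, say, like $(n!)^\beta$ with $\beta>1$. Consequently the optimal $n$ is of order $u$ (rather than of order $\log u$), which is exactly what converts the polynomial moment estimate into genuine exponential decay in $u$; everything weaker would leave only a stretched-exponential bound.
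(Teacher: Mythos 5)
Your argument is correct and is exactly the route the paper intends: the authors omit the proof precisely because it is the standard Markov/Chebyshev step applied to the moment bounds of Theorems~\ref{thm:moment0-bound} and~\ref{thm:moment}, followed by optimizing $n\asymp u$, which is what you do. The only cosmetic point is that \eqref{eq:mom4} and \eqref{eq:mom4-shift} bound the signed $n$th moment, so for \eqref{eq:tail-inc0} and \eqref{eq:tail-inc} one should take $n$ even (e.g.\ $n=2\lfloor\kappa u/2\rfloor$) so that $\E X^{n}=\E|X|^{n}$; this changes nothing in the estimate.
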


\section{Proofs of main theorems}\label{sec:proofs}

\CR{In this section we present proofs to our main results, namely Theorem~\ref{thm:main} and Corollaries~\ref{cor:fixed-point}-\ref{cor:non-differentiable}.}
We start with two auxiliary lemmas.
\begin{lemma}\label{le:eta}
There exists $\eta>0$ such that 
$$
\E e^{\eta|Z_1|} < \infty.
$$
\end{lemma}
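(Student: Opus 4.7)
The plan is to exploit the chaos representation \eqref{eq:rosenblatt_spectral_dec}, which expresses $Z_1$ as the almost sure and $L^2$ limit of the partial sums $S_N := \sum_{k=1}^N \lambda_k(X_k^2-1)$, where $(X_k)_{k\geq 1}$ are i.i.d.\ standard Gaussians and $\sum_{k\geq 1}\lambda_k^2 < \infty$. Since the sequence $(\lambda_k)$ is square-summable, in particular $M := \sup_{k\geq 1}|\lambda_k| < \infty$. Writing $\E e^{\eta|Z_1|} \leq \E e^{\eta Z_1} + \E e^{-\eta Z_1}$, it suffices to bound each one-sided exponential moment separately, for some $\eta > 0$ small enough that $2\eta M < 1$.

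Next, I would invoke the explicit Gaussian moment generating function: for $|2\eta\lambda_k| < 1$,
\[
\E \exp\bigl(\eta\lambda_k(X_k^2 - 1)\bigr) = e^{-\eta\lambda_k}(1-2\eta\lambda_k)^{-1/2}.
\]
Independence of the $X_k$'s then gives a closed form for the partial sum:
\[
\E e^{\eta S_N} = \prod_{k=1}^N e^{-\eta\lambda_k}(1-2\eta\lambda_k)^{-1/2}.
\]
I would pass to the limit $N\to\infty$ via Fatou's lemma applied to the a.s.\ convergence $S_N \to Z_1$, obtaining $\E e^{\eta Z_1} \leq \prod_{k\geq 1} e^{-\eta\lambda_k}(1-2\eta\lambda_k)^{-1/2}$, and an analogous bound for $\E e^{-\eta Z_1}$ with $\lambda_k$ replaced by $-\lambda_k$.

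Finally, I would verify convergence of the infinite product by Taylor expansion: for $|x|\leq 1/2$, $|-\tfrac12\log(1-x) - \tfrac{x}{2}| \leq C x^2$ for some absolute constant $C$. Applied with $x = 2\eta\lambda_k$, this yields
\[
\Bigl|\log\bigl(e^{-\eta\lambda_k}(1-2\eta\lambda_k)^{-1/2}\bigr)\Bigr| \leq 4C\eta^2 \lambda_k^2,
\]
so the series of logarithms converges absolutely by $\sum_k \lambda_k^2 < \infty$, and the infinite product is finite. The same bound works for $\E e^{-\eta Z_1}$, so any $\eta \in (0, 1/(4M))$ does the job.

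The argument is essentially routine once the chaos decomposition is invoked; the only point requiring care is the convergence of the infinite product, which is where square-summability of the eigenvalue sequence is essential. Conceptually, this is just the well-known fact that elements of the second Wiener chaos are sub-exponential (but in general not sub-Gaussian), and the linear cancellation $e^{-\eta\lambda_k}$ against the $(1-2\eta\lambda_k)^{-1/2}$ factor is precisely what allows the product to converge under the weaker hypothesis $\sum_k \lambda_k^2 < \infty$ rather than the stronger $\sum_k |\lambda_k| < \infty$.
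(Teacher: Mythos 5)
Your argument is correct, but it takes a more hands-on route than the paper. The paper's proof is a one-liner: by Lemma~\ref{lem:part_eval} the characteristic function of $Z_1$ is the product $\prod_{k}e^{-i\theta\lambda_k}(1-2i\theta\lambda_k)^{-1/2}$, and one observes that this admits a bounded analytic extension to a strip $\{|\mathrm{Im}\,\theta|<\delta_0\}$, which by standard theory yields exponential moments. You instead work directly with the two-sided moment generating function: explicit Gaussian computation for each factor, independence for the partial sums, Fatou to pass to the limit, and a Taylor estimate showing that the linear factor $e^{-\eta\lambda_k}$ cancels the leading term of $-\tfrac12\log(1-2\eta\lambda_k)$, so that $\sum_k\lambda_k^2<\infty$ suffices. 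This is the same underlying mechanism (the paper's "bounded analytic extension" claim is verified by exactly the kind of product estimate you carry out, just along the imaginary axis), but your version is more elementary and self-contained, avoiding the appeal to analytic continuation; the paper's version is shorter because it delegates the work to a known equivalence between analyticity of the characteristic function in a strip and exponential integrability. Two small points of care: \eqref{eq:rosenblatt_spectral_dec} is an equality in distribution, not an almost sure representation, so you should say that the series $\sum_k\lambda_k(X_k^2-1)$ converges a.s.\ (Kolmogorov, since the variances $2\lambda_k^2$ are summable) and that finiteness of $\E e^{\eta|Z_1|}$ depends only on the law; and your choice $\eta<1/(4M)$ indeed guarantees $|2\eta\lambda_k|\le 1/2$, so the Taylor bound applies uniformly in $k$. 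With these clarifications the proof is complete.
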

\begin{proof}
This follows by observing that by Lemma~\ref{lem:part_eval} the characteristic function of $Z_1$ has a bounded analytic extension to a strip $\{|{\rm Im}\, \theta |<\delta_0\}$ for some $\delta_0>0$.
\end{proof}
\begin{proposition}
\label{prop:sup-tail}
Let $(Z_t)_{t \geq 0}$ be the Rosenblatt process and set \CR{$I = [s-h,s+h]$}, where $h\leq 1$ and $s > 0$. Then,
$$
\mathbb{P}(\sup_{t\in I}|Z_t-Z_s|\geq u) \leq C\exp\left(-\frac{u}{c_1h^{H}}\right),
$$
where $c_1$ and $C$ are constants that depend only on $H$. 
\end{proposition}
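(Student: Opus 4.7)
\emph{Proof plan.} The strategy has two steps: first scale the problem down to a fixed interval by self-similarity and stationary increments, and then bound the supremum on this fixed interval using second-chaos hypercontractivity and a dyadic chaining. For the first step, assuming $s \geq h$ the stationarity of the increments of $Z$ gives $(Z_t - Z_s)_{t \in I} \overset{d}{=} (Z_u - Z_h)_{u \in [0, 2h]}$, and then self-similarity of index $H$ gives $(Z_u - Z_h)_{u \in [0, 2h]} \overset{d}{=} h^H (Z_v - Z_1)_{v \in [0, 2]}$. Hence it suffices to prove a sub-exponential tail $\mathbb{P}(M > v) \leq C e^{-v/c_1}$ for the rescaled supremum $M := \sup_{v \in [0, 2]} |Z_v - Z_1|$. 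The case $s < h$ would be handled by a minor adjustment of the shift (splitting the interval at a point where the stationary-increments reduction applies), absorbing the extra constant into $C$ and $c_1$.

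The crucial analytic input is that each increment $Z_v - Z_1$ lies in the second Wiener chaos with $L^2$-norm proportional to $|v - 1|^H$, so Nelson's hypercontractivity gives $\|Z_v - Z_1\|_p \leq (p-1)\|Z_v - Z_1\|_2 \leq C_H (p-1)|v-1|^H$ for every $p \geq 2$. Feeding this into a standard dyadic chaining: let $D_j = \{k/2^j : 0 \leq k \leq 2^{j+1}\} \subset [0, 2]$ and let $\Delta_j$ denote the maximum absolute nearest-neighbor increment of $Z$ across $D_{j+1}$. A union bound combined with the hypercontractive estimate yields $\|\Delta_j\|_p \leq 2^{j/p}\,C_H (p-1)\, 2^{-jH}$. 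Telescoping along a dyadic path, for every $t \in D_n$ one has $|Z_t - Z_1| \leq \sum_{j=0}^{n-1} \Delta_j$, and summing, provided $p > 1/H$, gives $\|\max_{t \in D_n} |Z_t - Z_1|\|_p \leq C(H)\, p$ uniformly in $n$. Almost sure continuity of the sample paths of $Z$ then promotes this to $\|M\|_p \leq C(H)\, p$ for all $p$ sufficiently large.

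A linear-in-$p$ moment bound is precisely the right strength to conclude $\mathbb{E} e^{\eta M} < \infty$ for some $\eta > 0$: expanding the exponential and using $k^k/k! \leq e^k$ gives $\sum_k \eta^k \mathbb{E} M^k / k! \leq \sum_k (\eta C(H) e)^k$, which converges for small $\eta$. Markov's inequality then yields $\mathbb{P}(M > v) \leq C e^{-\eta v}$, and undoing the scaling via $M \mapsto h^H M$ produces the stated estimate with the factor $h^H$ in the denominator.

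The main obstacle is ensuring that the chaining constants stay linear in $p$, rather than growing super-linearly; this is exactly what guarantees sub-exponential (rather than sub-Weibull) tails and depends crucially on the $p$-linear nature of the second-chaos hypercontractive inequality. This linear dependence reflects the general principle that second-chaos random variables have sub-exponential tails, already witnessed at a single time in Lemma~\ref{le:eta}.
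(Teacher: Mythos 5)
Your proposal is correct, but it follows a genuinely different route from the paper. The paper makes the same first reduction (stationarity of increments plus $H$-self-similarity to pull out the factor $h^{H}$ and pass to a unit-scale supremum), but then it invokes the Garsia--Rodemich--Rumsey inequality with $\Psi(u)=|u|^p$ and $p(u)=|u|^{H/2+1/p}$, using that the GRR constant is at most $\tilde C^{\,p}$ for large $p$, to obtain $\E\sup_{t\in[0,1]}|Z_t|^p\le C^p\,\E|Z_1|^p$; the exponential integrability is then imported wholesale from Lemma~\ref{le:eta} (itself proved via the analytic extension of the characteristic function coming from Lemma~\ref{lem:part_eval}), and Chebyshev's inequality finishes. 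You never use Lemma~\ref{le:eta} quantitatively: you derive the linear-in-$p$ growth $\|\sup_t|Z_t-Z_1|\|_p\le C(H)\,p$ directly from the second-chaos hypercontractive bound $\|F\|_p\le(p-1)\|F\|_2$ combined with dyadic chaining, which amounts to re-proving a supremum version of the exponential integrability from scratch. What each approach buys: GRR transfers the whole problem to the single variable $Z_1$ in one stroke and stays within tools already established in the paper, while your chaining argument is self-contained given only the chaos property and the covariance scaling $\E(Z_v-Z_1)^2=c|v-1|^{2H}$, and it makes the sub-exponential (as opposed to sub-Weibull) nature of the tail transparent through the factor $(p-1)$. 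A few loose ends you should tighten, all minor: the telescoping sum misses the level-zero link joining the center $1$ to the $D_0$-approximant of $t$ (handled, e.g., by $|Z_1-Z_0|\vee|Z_2-Z_1|\le 2\Delta_0$); the geometric sum over $j$ is bounded by $C(H)\,p$ only once $p$ is bounded away from $1/H$, say $p\ge 2/H$, which suffices; and both your reduction and the paper's implicitly assume $s\ge h$ (or an extension of $Z$ to negative times) and both rely on almost sure continuity of the sample paths, which the paper likewise takes as given when applying GRR.
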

\begin{proof}
By stationarity of increments and self-similarity we have the distributional equality
$
\CR{(|Z_r-Z_v|)_{r\in I} \sim  ( |h|^{H}|Z_r|)_{r\in [0,1].}}
$
Hence it is enough to consider case $s=0$ and $h=1$. At this point we recall the Garsia-Rodemich-Rumsey inequality~\cite{Garsia-Rodemich-Rumsey-1970}.
\begin{proposition} Let $\Psi(u)$ be a non-negative even function on $(- \infty, \infty)$ and $p(u)$ be a non-negative even function on $[-1,1]$. Assume both $p(u)$ and $\Psi(u)$ are non decreasing for $u \geq 0$. Let $f(x)$ be continuous on $[0, 1]$ and suppose that
\begin{align}
\notag \int_0^1 \int_0^1 \Psi\left( \frac{f(x) - f(y) }{p(x - y)} \right)dx dy \leq B < \infty.
\end{align}
Then, for all $s, t \in [0, 1]$, 
\begin{align}
\notag |f(\CR{s}) - f(t) | \leq 8 \int_0^{|s - t|} \Psi^{-1} \left( \frac{4B}{u^2} \right) dp(u).
\end{align}
\end{proposition}
In particular\footnote{\CR{Actually, this special case can also be obtained from a standard trace theorem for Besov spaces.}}, if $\Psi(u) = |u|^p$ and $p(u) = |u|^{\alpha + 1/p}$ where $\alpha \geq 1/p$ and $p \geq 1$, then for any continuous $f$ and $t \in [0,1]$, 
$$
|f(t)-f(0)|^p \leq C_{\alpha,p}t^{\alpha p -1}\int_{[0,1]^2}|f(r)-f(v)|^p |r-v|^{-\alpha p - 1}dr dv.
$$
Here the constant $C_{\alpha,p}$ is given by
$
C_{\alpha,p} = \CR{4\cdot 8^p\big(\alpha + p^{-1}\big)^{p}\big(\alpha -p^{-1}\big)^{-p}}.
$
Thus, for fixed $\alpha$ and large enough $p$, we have
$
C_{\alpha,p} \leq \tilde{C}^p,
$
where $\tilde{C}$ depends on the chosen $\alpha$.  We apply this
 to $f(t) = Z_t$  and choose  $\alpha = \frac{H}{2}$, in order to obtain for $p>4/H$:
$$
\sup_{t\in [0,1]}|Z_t|^p \leq C'^p \int_{[0,1]^2}|Z_r-Z_v|^p |r-v|^{-Hp/2 - 1}dr dv.
$$
By stationarity of increments and self-similarity we have $\E|Z_r-Z_v|^p = |r-v|^{Hp}\E|Z_1|^p,$
which leads to 
\begin{equation*}
\begin{split}
\E\sup_{t\in [0,1]}|Z_t|^p &\leq\; 
 C'^p \E|Z_1|^p\int_{[0,1]^2} |r-v|^{Hp/2 - 1}dr dv 
\; \leq \; c_2C'^p\E|Z_1|^p\\
&\leq\; C^p\E|Z_1|^p.
\end{split}
\end{equation*}
By Lemma \ref{le:eta} we may choose  $\eta>0 $ such that 
$
\E e^{\eta|Z_1|} <\infty.
$
Then we have 
\begin{equation*}
\begin{split}
\E e^{\eta C^{-1} \sup_{t\in [0,1]}|Z_t|} &
= \sum_{k=0}^\infty \frac{(\eta C^{-1})^k\E\sup_{t\in [0,1]}|Z_t|^k}{k!} \;
\leq \;\sum_{k=0}^\infty \frac{\eta^k\E|Z_1|^k}{k!}\\
&= \E \sum_{k=0}^\infty \frac{\eta^k|Z_1|^k}{k!} \;
= \; \E e^{\eta|Z_1|} <\infty
\end{split}
\end{equation*}
The claim follows easily from this by Chebyshev's inequality.
\end{proof}

We are now in position to prove Theorem \ref{thm:main}. After we have the moment and tail estimates from Section \ref{sec:moments} at our disposal,  the remaining ideas of the proof follow closely those of  \cite[Theorem 4.3]{xiao-et-al}. The main difference is that in our case, we do not have Gaussian structures at our disposal leading to some modifications.
\begin{proof}[Proof of Theorem \ref{thm:main}]
 We divide the proof into five steps. In the first four steps we prove \eqref{eq:main-fixed-s}. The proof of \eqref{eq:main-sup-s} will then be established in step 5. 

Throughout steps 1-4 we denote $g(r) = r^{1-H}(\log \log r^{-1})^{\cexp}$, where $r<e$, and $C^{\CR{+}}_n := [s,s+2^{-n}]$\CR{, $C_n^- := [s-2^{-n},s]$}. It  actually suffices to prove 
$$
\limsup_{n\to\infty} \frac{L^*(C^{\CR{\pm}}_n)}{g(2^{-n})} \leq C
$$ 
almost surely. \CR{Moreover, it suffices to consider only the interval $C_n^+$, as $C_n^-$ can be treated by exactly the same arguments and by considering the interval $[a-r,a]$ in Corollary \ref{cor:tail1}.} Throughout the proof, we denote by $c_i,i=1,2,\ldots$ generic constants that varies throughout the proof. \CR{We also write simply $C_n = [s,s+2^{-n}]$ instead of $C_n^+$.}

\smallskip

\noindent \textbf{Step 1:}
Set $u= 2c_12^{-nH}\log n$, where $c_1$ is given in Proposition \ref{prop:sup-tail}. Then Proposition \ref{prop:sup-tail} gives
$$
\mathbb{P}\left(\sup_{t\in C_n}|Z_t-Z_s|\geq 2c_12^{-nH}\log n\right) \leq c_2\exp(-2\log n) = c_2n^{-2}.
$$
Hence Borel-Cantelli lemma implies that there exists $n_1 = n_1(\omega)$ such that for $n\geq n_1$ we have
$$
\sup_{t\in C_n}|Z_t-Z_s|\leq 2c_12^{-nH}\log n.
$$
\textbf{Step 2:}
Set $\theta_n = 2^{-nH}(\log \log 2^n)^{-\cexpt}$ and define
$$
G_n =\{x \in \R : |x|\leq 2c_12^{-nH}\log n, x= \theta_n p, \text{ for some }p \in \Z\}.
$$
Then 
$$
\# G_n \leq c_3 (\log n)^{1+\cexpt}
$$
and \eqref{eq:tail-L} implies
$$
\mathbb{P}(\max_{x\in G_n} L(x+Z_s,C_n) \geq c_4 g(2^{-n})) \leq c_5  (\log n)^{1+\cexpt}n^{-c_6b_1}
$$
which is summable by choosing $c_4$ large enough which in turn forces $c_6$ large. Thus Borel-Cantelli gives for large enough $n\geq n_2(\omega)\geq n_1(\omega)$ that
\begin{equation}
\label{eq:max-L-bounded}
\max_{x\in G_n} L(x+Z_s,C_n) \leq c_4 g(2^{-n})).
\end{equation}
\textbf{Step 3:}
Given integers $n,k\geq 1$ and $x\in G_n$ we set
$$
F(n,k,x) = \{ y = x + \theta_n \sum_{j=1}^k \varepsilon_j2^{-j} \;:\; \varepsilon_j \in \{0,1\}, 1\leq j \leq k\}.
$$
Pair of points $y_1,y_2 \in F(n,k,x)$ is said to be linked if $y_2-y_1 = \theta_n \varepsilon 2^{-k}$ for $\varepsilon \in\{0,1\}$. Next fix $0<\gamma< \frac{\frac{1}{H}-1}{2}$ and choose positive $\delta$ such that 
$\delta \cexps < \gamma$. Set
$$
B_n = \bigcup_{x\in G_n}\bigcup_{k=1}^\infty \bigcup_{y_1,y_2} \{|L(y_1+Z_s,C_n)-L(y_2+Z_s,C_n)| \geq 2^{-n(1-H-\gamma H)}|y_1-y_2|^{\gamma}(c_72^{\delta k}\log n)^{\cexps}\}
$$
where $\cup_{y_1,y_2}$ is the union over all linked pairs $y_1,y_2\in F(n,k,x)$. Now \eqref{eq:tail-inc} with $u = c_72^{\delta k}\log n$ implies
$$
\mathbb{P}(B_n) \leq c_3(\log n)^{1+ \cexpt}\sum_{k=1}^\infty 4^{k}\exp\left(-c_82^{\delta k}\log n\right).
$$
Here we have used the fact $\# G_n \leq C(\log n)^{1+\cexpt}$ and that for given $k$ there exists less than $4^k$ linked pairs $y_1,y_2$. Now, again by choosing $c_7$ large enough which makes $c_8$ large, we get 
$$
\sum_{n=2}^\infty(\log n)^{1+ \cexpt}\sum_{k=1}^\infty 4^{k}\exp\left(-c_82^{\delta k}\log n\right) < \infty.
$$
This further implies, again by Borel-Cantelli lemma, that $B_n$ occurs only finitely many times.

\textbf{Step 4:}
Let $n$ be a fixed and assume that $y \in \R$ satisfies $|y| \leq 2c_12^{-nH}\log n$. Then we may represent $y$ as $y = \lim_{k\to\infty}  y_k$ with
$$
y_k = x + \theta_n \sum_{j=1}^k \varepsilon_j 2^{-j},
$$
where $y_0= x \in G_n$ and $\varepsilon_j \in \{0,1\}$. 
On the event $B_n^c$ we have 
\begin{equation*}
\begin{split}
|L(x+Z_s,C_n)-L(y+Z_s,C_n)| & \leq \sum_{k=1}^\infty |L(y_k+Z_s,C_n)-L(y_{k-1}+Z_s,C_n)| \\
&\leq \sum_{k=1}^\infty 2^{-n(1-H-\gamma H)}|y_k-y_{k-1}|^{\gamma}(c_72^{\delta k}\log n)^{\cexps} \\
&\leq \sum_{k=1}^\infty 2^{-n(1-H-\gamma H)}\theta_n^{\gamma}2^{-k\gamma}(c_72^{\delta k}\log n)^{\cexps} \\
& \leq c_9 2^{-n(1-H)} \sum_{k=1}^\infty (\log \log 2^n)^{-\gamma\cexpt}2^{-k\gamma}(c_72^{\delta k}\log n)^{\cexps} \\
& \leq c_{10} 2^{-n(1-H)} (\log \log 2^n)^{\cexp} \sum_{k=1}^\infty 2^{(\delta \cexps - \gamma)k} \\
& \leq c_{11}g(2^{-n}),
\end{split}
\end{equation*}
where the last inequality follows from 
$\delta \cexps < \gamma$. Combining this with \eqref{eq:max-L-bounded} then yields
$$
\sup_{|x|\leq 2c_12^{-nH} \log n} L(x+Z_s,C_n) \leq c_{12} g(2^{-n})
$$
or in other words,
$$
\sup_{|x-Z_s|\leq 2c_12^{-nH}\log n} L(x,C_n) \leq c_{12}g(2^{-n}).
$$
Claim \eqref{eq:main-fixed-s} now follows from Step 1 and the fact $L^*(C_n) = \sup\{L(x,C_n):x\in \overline{Z(C_n)}\}$. 

\textbf{Step 5:}
It remains to prove \eqref{eq:main-sup-s}. However, for this the arguments are similar to above and to the Gaussian case (for detailed proof in the case of the fractional Brownian motion, we refer to \cite{xiao-ptrf}). Thus we present only the key arguments here.

We choose $\tilde{\theta}_n = 2^{-nH} (\log 2^n)^{-\cexpt}$ and
$$
\tilde{G}_n =\{x \in \R : |x|\leq n, x= \tilde{\theta}_n p, \text{ for some }p \in \Z\}.
$$
If $\mathcal{D}_n$ is the dyadic partition of the interval $I$, then the arguments of Step 2 together with \eqref{eq:tail-L} gives that, for $n\geq n_1(\omega)$ and a suitable constant $c_{13}$, we have
\begin{equation}
\label{eq:loctime-bounded}
L(x,B) \leq c_{13}2^{-n(1-H)}(\log 2^n)^{\cexp}
\end{equation}
for all $B \in \mathcal{D}_n$ and $x\in \tilde{G}_n$. Similarly, in Step 3 and Step 4 we replace $F(n,k,x)$ with $\tilde{F}(n,k,x)$ where $G_n$ and $\theta_n$ are replaced with $\tilde{G}_n$ and $\tilde{\theta}_n$, and instead of $B_n$ we consider the event
\begin{equation*}
\begin{split}
\tilde{B}_n &= \left\{|L(y_1,B)-L(y_2,B)| \geq 2^{-n(1-H-\gamma H)}|y_1-y_2|^\gamma (c_{14}k \log 2^n)^{\cexps} :\right. \\
&\left. \text{for some }B\in \mathcal{D}_n, y_1,y_2 \in \tilde{F}(n,k,x) \text{ linked} \right\}
\end{split}
\end{equation*}
for suitably chosen $\gamma$ and constant $c_{14}$.
As in Step 3, then \eqref{eq:tail-inc} gives that $\tilde{B}_n$ occurs only finitely many times. In the complement $\tilde{B}_n^c$ we can apply \eqref{eq:loctime-bounded} and proceed as in Step 4 to conclude that 
for all $B \in \mathcal{D}_n$ 
$$
L(x,B) \leq c_{15}2^{-n(1-H)}(\log 2^n)^{\cexp}.
$$
This completes Step 5 and thus the whole proof. 
\end{proof}
Proofs of corollaries \ref{cor:fixed-point} to \ref{cor:non-differentiable} follows essentially from Theorem \ref{thm:main} and the arguments presented in \cite{xiao-et-al}. Thus we simply state the key arguments and leave the details to the reader.
\begin{proof}[Proof of Corollary \ref{cor:fixed-point}]
This follows directly from Theorem \ref{thm:main} and the fact that
$$
L(x,[t-r,t+r]) \leq L^*([t-r,t+r]).
$$
\end{proof}
\begin{proof}[Proof of Corollary \ref{cor:hausdorff}]
As the proof \CR{of~\cite[Theorem 4.1]{xiao-ptrf}}, the claim follows from Corollary~\ref{cor:fixed-point} and the upper density theorem of~\cite{roger-taylor}. We omit the details.
\end{proof}
\begin{proof}[Proof of Corollary \ref{cor:non-differentiable}]
This claim follows again from Theorem \ref{thm:main} by applying exactly the same arguments as in the proof of Theorem 4.5 of~\cite[Theorem 4.5]{xiao-et-al}. We omit the details. 
\end{proof}

\section{Technical results}\label{sec:lemmas_proofs}

\subsection{Spectral estimates}\label{sec:spectral}

\begin{proof}[Proof of Lemma~\ref{lem:part_eval}] Note that by~\eqref{eq:rosen_rep_kernel} and~\eqref{eq:rosen_kernel}, 
\begin{align}
\notag \sum_{j  =1}^n \xi_j Z_{t_j} = \int_{\mathbb{R}^2} H_t(x, y) Z_G(dx) Z_G(dy), 
\end{align} 
\noindent where the integral is taken over $x \neq \pm y$, $t = (t_1, \ldots, t_n) \in \R_+^n$ and
\begin{align}
\notag H_t(x, y) = \sum_{j = 1}^n  \xi_j \frac{e^{i t_j (x + y) } -1 }{i (x + y)},
\end{align} 
\noindent is a Hilbert-Schmidt kernel. Then the operator $A_{t, \xi}$ satisfies 
\begin{align}
\notag (A_{t, \xi} f)(x) = \int_{\mathbb{R}} H_t(x, -y) f(y) G(dy) = \int_{\mathbb{R}} \sum_{j = 1}^n  \xi_j \frac{e^{i t_j (x - y) } -1}{i (x - y)} f(y) |y|^{-H} dy.
\end{align}

\noindent\CR{Similarly} to~\eqref{eq:rosenblatt_spectral_dec}, for all $\xi \in \mathbb{R}^n$, 
\begin{align}
\notag \sum_{j  =1}^n \xi_j Z_{t_j} \overset{d}{=} \sum_{k =1}^{\infty} \lambda_k (X_k^2 -1),
\end{align} 
\noindent where $(X_k)_{k \geq 1}$ is a sequence of independent Gaussian random variables and $(\lambda_k)_{k \geq 1}$ are the eigenvalues of the operator $A_{t, \xi}$. Then, the characteristic function (evaluated at $1$) of  $\sum_{j  =1}^n \xi_j Z_{t_j} $ is 
\begin{align}
\notag \mathbb{E} \exp \left( i \sum_{j = 1}^n \xi_j Z_{t_j} \right) = \prod_{k \geq 1} \frac{e^{- i \lambda_k} }{ \sqrt{1 - 2i \lambda_k}},
\end{align} 
\noindent where we have used the expression for the characteristic function of a $\chi^2$ distribution and independence. Note, that the product converges since $\sum \lambda_k^2 < \infty$. Furthermore, 
\begin{align}
\notag \left| \mathbb{E} \exp \left( i \sum_{j = 1}^n \xi_j Z_{t_j} \right) \right|  = \prod_{k  \geq 1} \frac{1}{( 1 + 4 \lambda_k\CR{^2})^{1/4}},
\end{align}
\noindent as desired.

\end{proof}

\noindent Before we present the proof of Lemma~\ref{lem:part_new_operator} we will establish some properties of the convolution $K_{\alpha}$ defined via $\widehat{K_{\alpha} f}(\xi) = |\xi|^{- \alpha} \widehat{f} (\xi)$ for {\color{red} $\alpha \in [0, 1/2)$}. An alternative representation that \CR{is} useful for the proof of Lemma~\ref{lem:part_new_operator} is
\begin{align}
\label{eq:ka_conv} K_{\alpha} f(x) = \int_R h(x - y) f(y) dy = \int_R k(x, y) f(y) dy\CR{,}
\end{align}
where $h(x - y) = k(x, y) = d_{\alpha} |x - y|^{\alpha -1}$ for some constant $d_{\alpha}$ (see e.g. \cite[Theorem 2.4.6]{Gr1}). 

\noindent Moreover, $K_{\alpha}$ can be extended to a bounded operator from $L^2(J)$ to $L^2(\mathbb{R})$ where $J \subset \mathbb{R}$ is compact.  
\CR{Let first  $f \in C_c^{\infty}(\mathbb{R})$ and  recall} that the smooth compactly supported functions are dense in $L^2(J)$.  \CR{Since $f$ is a Schwartz function,} $\widehat{f}$ is bounded and decays at any polynomial rate. Then $\int_{\mathbb{R} } |\widehat{K_{\alpha} f} (\xi)|^2 d\xi < \infty$, and by \CR{Plancherel's} theorem $\int_{\mathbb{R}} |K_{\alpha} f(x)|^2 dx < \infty$. 

 We are left to show that when $f \in L^2(J)$, $\int_{\mathbb{R} }| K_{\alpha} f(x)|^2 dx < C(J) ||f||_{L^2(J)}^2$, for some constant $C(J) > 0$ depending only on $J$.  By \CR{Plancherel's} theorem 
\CR{\begin{align}
\|K_\alpha f\|_{L^2(\R)}^2 =& \; \frac{1}{2\pi} \int_{|\xi| \leq 1} |\xi|^{ - 2 \alpha} |\widehat{f}(\xi)|^2 d \xi  + \frac{1}{2\pi}\int_{|\xi| >1} |\xi|^{ - 2 \alpha} |\widehat{f}(\xi)|^2 d \xi \notag\\
\label{eq:ka_1} \leq & \;\; \int_{|\xi| \leq 1} |\xi|^{ - 2 \alpha} |\widehat{f}(\xi)|^2 d \xi   + ||f||_{L^2(J)}^2, 
\end{align} }
To bound the first integral, note that by definition, for every $\xi \in \mathbb{R}$, 
\begin{align}
\label{eq:ka_2}  |\widehat{f}(\xi)| = \left|\int_{J} e^{- i \xi x} f(x) dx \right| \leq \int_{J} |f(x)| dx \leq |J|^{1/2} \left(\int_J |f(x)|^2\right)^{1/2} dx, 
\end{align}
\noindent where we have applied the Cauchy-Schwarz inequality, and $|J|$ is the Lebesgue measure of the compact set $J$. Therefore, combining~\eqref{eq:ka_1} and~\eqref{eq:ka_2}, yields
\begin{align}
\CR{\|K_\alpha f\|_{L^2(\R)}^2} \leq ||f||_{L^2(J)}^2 \left( |J| \int_{-1}^1 |\xi|^{- 2\alpha} d\xi + 1\right) = C(J) ||f||_{L^2(J)}^2 <  \infty, 
\end{align}
\noindent where $C(J) > 0$ is some constant depending only on $J$. This establishes that $K_{\alpha}$ extends to a bounded operator from $L^2 (J)$ to $L^2(\mathbb{R})$.

\CR{If $g$ is a bounded, compactly supported function and $M_g$ is the multiplication operator given by $M_g f (x) = g(x) f(x)$,  then $M_g:L^2(\R)\to L^2(\R)$ is obviously bounded and its (Hilbert space adjoint) equals $M_{\overline{g}}$. Our previous observation on $K_\alpha$ then implies  that $K_\alpha M_{g}$ extends to a bounded operator on $L^2(\R)$. Let us check that the adjoint of  $K_\alpha M_{g}$ equals  $M_{\overline{g}}K_\alpha $ (we need to be cautious since $K_\alpha$ is not bounded on the whole of $L^2(\R)$): for any $f,h\in C_0^\infty(\R)$ we obtain 
\begin{align}
\notag \int_{\mathbb{R}} K_{\alpha} M_g f (x)& \overline{h(x)} dx\; =\;\;  \frac{1}{2\pi}\int_{\mathbb{R}} \widehat{(K_{\alpha} M_g f)} (\xi) \overline{\widehat{h}(\xi)} d\xi \notag
\; = \; \frac{1}{2\pi}\int_{\mathbb{R}} \widehat{g} * \widehat{f} (\xi) \overline{|\xi|^{-\alpha} \widehat h(\xi)} d \xi\\
 =\; &\frac{1}{2\pi} \int_{\mathbb{R}}\int_{\mathbb{R}} \widehat{g}(\xi - t) \widehat{f}(t) dt \overline{\widehat{K_{\alpha} h}(\xi)}d \xi \;=\;
 \int_{\mathbb{R}} g(x)f(x)\overline{K_{\alpha} h(x)} dx\notag \\
\notag  = \;&
 \int_{\mathbb{R}} f(x) \overline{M_{\bar{g}} K_{\alpha} h(x)} dx.
\end{align} }
Therefore, $M_{\overline{g}}K_{\alpha}$ extends to a bounded operator from $L^2(\mathbb{R})$ to $L^2(\mathbb{R})$, and by extension, so does $K_{\alpha} M_g K_{\alpha}$, since $K_{\alpha} M_g K_{\alpha} = K_{\alpha} M_g (K_{\alpha} M_{\chi_J})^*$, where $J \subset \mathbb{R}$ is a compact \CR{interval} containing the support of $g$.

\begin{proof}[Proof of Lemma~\ref{lem:part_new_operator}] Let $\CR{T} : L^2(|y|^{-\CR{H}} dy) \to L^2 (\mathbb{R})$ be given by $(\CR{T} f)(x) = |x|^{-H/2} f(x)$. Note that $\CR{T}$ is an isometric isomorphism.  Hence, the operator $A_{t, \xi}$ is isometrically isomorphic to $V_{t, \xi} \coloneqq \CR{T} A_{t, \xi} \CR{T}^{-1} : L^2(\mathbb{R}) \to L^2(\mathbb{R})$, that satisfies
\begin{align}
\notag V_{t, \xi}f(x) = |x|^{-H/2} \int_{\mathbb{R}} \sum_{j = 1}^n  \xi_j \frac{e^{i t_j (x - y) } -1}{i (x - y)} f(y) |y|^{-H/2}dy.
\end{align}
\noindent Next, recall that  the Fourier transform $\mathcal{F}$ provides an isometric isomorphism $L^2(\mathbb{R}) \to L^2(\mathbb{R})$, up to a constant, and transforms multiplication to convolution. Since
\begin{align}
\notag \CR{(\mathcal{F} g)(x)} = \left( \sum_{j =1}^n \xi_j \mathcal{F} \chi_{[0, t_j]} \right) (x) = \sum_{j =1}^n \xi_j \frac{e^{-i t_j x} - 1}{- i x},
\end{align} 
\noindent it follows that
\begin{align}
\notag  \mathcal{F}^3  (  K_{H/2} M_g K_{H/2} f)(x) = & 2\pi  \mathcal{F}  (  K_{H/2} M_g K_{H/2} f)(-x)
\;=\; \CR{|x|^{-H/2} \left( (\mathcal{F} g) \ast \widehat{K_{H/2} f} \right) (-x)} \\ 
\notag  = &   
2\pi |x|^{-H/2}  \int_{\mathbb{R}} \sum_{j = 1}^n  \xi_j \frac{e^{-i t_j (-x - y) } - 1 }{- i (-x - y)} \widehat{f}(y) |y|^{-H/2}dy, \\
\notag = &  
2\pi|x|^{-H/2}  \int_{\mathbb{R}} \sum_{j = 1}^n  \xi_j \frac{e^{i t_j (x - y) } - 1 }{i (x - y)} \widehat {f}(-y) |y|^{-H/2}dy, \\
\notag = &  
|x|^{-H/2}  \int_{\mathbb{R}} \sum_{j = 1}^n  \xi_j \frac{e^{i t_j (x - y) } - 1 }{i (x - y)} (\mathcal{F}^3{f})(y) |y|^{-H/2}dy, \\
\notag = & 
(V_{t, \xi} \mathcal{F}^3 f)( x). 
\end{align} 
\noindent where $\ast$ denotes convolution, and we have used that $\mathcal{F}^3 f (x) = 2\pi\widehat f(-x)$. Therefore, there is a constant $c(H)$ depending only on $H$, such that $A_{t, \xi}$ and  $B(t, \xi) \coloneqq c(H) K_{H/2} M_g K_{H/2}$ are unitarily equivalent. In particular, since $A_{t, \xi}$ is self-adjoint, so is $B(t, \xi)$, although this can be easily seen also from the definition of $B$. Moreover, the two operators have the same eigenvalues,  \CR{and, more importantly, their singular value sequences coincide}.

 In order to establish~\eqref{eq:bound_svalue} we need two technical results. First,  a key Lemma~\ref{le:piecing} compares the singular values of $B_{t, \xi}$ to $B_{t_1, \xi_1} = c(H) \xi_1 K_{H/2} \chi_{[0, t_1]} K_{H/2} $. Then, Lemma~\ref{le:jelppi}, establishes asymptotics for the singular values of a related operator. 
\begin{lemma}\label{le:piecing}
Let $\alpha\in (0,1/2)$ and $I\subset\R$ be an interval. Then, there is a bounded operator $U_{\alpha,I}$ on $L^2(\R)$, \CR{whose norm is bounded by a finite constant that depends only on $\alpha$}, so that for any compactly supported $f\in L^2(\R)$:
\begin{equation}\label{eq:U}
K_\alpha(M_{\chi_I}f)=U_{\alpha,I}K_\alpha f.
\end{equation}
\end{lemma}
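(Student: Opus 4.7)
The plan is to construct $U_{\alpha,I}$ as the continuous extension of a map first defined on a dense subspace. Set $\mathcal{D}_\alpha := \{K_\alpha f : f \in C_c^\infty(\R)\} \subset L^2(\R)$ and define
\[
U_{\alpha,I}(K_\alpha f) := K_\alpha(M_{\chi_I} f), \qquad f \in C_c^\infty(\R).
\]
This is well-defined because $K_\alpha$ is injective on $C_c^\infty(\R)$: if $K_\alpha f = 0$ in $L^2(\R)$, then Plancherel gives $|\xi|^{-\alpha}\widehat{f}(\xi) = 0$ a.e.\ and hence $f = 0$. An analogous Fourier argument shows that $\mathcal{D}_\alpha$ is dense in $L^2(\R)$: if $g \in L^2(\R)$ is orthogonal to every element of $\mathcal{D}_\alpha$, then $|\xi|^{-\alpha}\widehat{g}(\xi) = 0$ almost everywhere, forcing $g = 0$.

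The crux of the proof is then the uniform estimate
\[
\|K_\alpha(\chi_I f)\|_{L^2(\R)} \leq C_\alpha \|K_\alpha f\|_{L^2(\R)}, \qquad f \in C_c^\infty(\R),
\]
with $C_\alpha$ depending only on $\alpha$. By Plancherel this is equivalent to
\[
\int_\R |\xi|^{-2\alpha}|\widehat{\chi_I f}(\xi)|^2\, d\xi \;\leq\; C_\alpha^2 \int_\R |\xi|^{-2\alpha}|\widehat{f}(\xi)|^2\, d\xi,
\]
that is, the assertion that $M_{\chi_I}$ is bounded on the homogeneous Sobolev space $\dot H^{-\alpha}(\R)$ with operator norm depending only on $\alpha$. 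Once this estimate is in hand, $U_{\alpha,I}$ extends uniquely by continuity to a bounded operator on $L^2(\R)$ satisfying $\|U_{\alpha,I}\|\leq C_\alpha$ and the identity \eqref{eq:U}, completing the proof.

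The hard part is proving the uniform Sobolev multiplier estimate. First I would exploit translation invariance of $\dot H^{-\alpha}$, together with the translation invariance of the Fourier multiplier $K_\alpha$, to reduce to intervals of the form $I = [0,L]$. Next, using the homogeneity relation $D_{1/\lambda} K_\alpha D_\lambda = \lambda^{-\alpha} K_\alpha$ (where $D_\lambda f(x) = f(\lambda x)$) together with $D_{1/\lambda} M_{\chi_I} D_\lambda = M_{\chi_{\lambda I}}$, a direct change of variables in the displayed inequality above reduces the problem to a single fixed interval, say $I_0 = [0,1]$. For $I_0$ I would invoke the classical fact that for $\alpha \in (0,1/2)$ the indicator $\chi_{[0,1]}$ is a pointwise multiplier on $\dot H^\alpha(\R)$; this can be obtained from the intrinsic seminorm
\[
\|u\|_{\dot H^\alpha}^2 \;\sim\; \iint_{\R \times \R} \frac{|u(x)-u(y)|^2}{|x-y|^{1+2\alpha}}\, dx\, dy
\]
by splitting the integral according to whether $x,y$ lie inside or outside $[0,1]$ and controlling the boundary contributions using the hypothesis $\alpha<1/2$. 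The bound on $\dot H^{-\alpha}$ then follows by duality, which yields the required uniform constant $C_\alpha$.
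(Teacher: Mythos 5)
Your proposal is correct in substance but takes a genuinely different route from the paper. The paper proves the key estimate $\|K_\alpha(\chi_I f)\|_{L^2}\leq C(\alpha)\|K_\alpha f\|_{L^2}$ on the Fourier side: multiplication by $\chi_{(a,b)}$ in the frequency variable is written as a combination of modulated Hilbert transforms, and the required bound on $L^2(|\xi|^{-2\alpha}d\xi)$ then follows from the Hunt--Muckenhoupt--Wheeden weighted inequality, since $|\xi|^{-2\alpha}$ is an $A_2$ weight exactly for $\alpha\in(0,1/2)$; uniformity in $I$ is automatic because the modulations $N_a$ are isometries of the weighted space, and unbounded intervals are covered on the same footing. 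You instead read the estimate as boundedness of $M_{\chi_I}$ on $\dot H^{-\alpha}$, reduce by translation and dilation covariance to $I=[0,1]$ (your scaling computation does give a constant independent of $|I|$), quote the classical fact that $\chi_{[0,1]}$ is a pointwise multiplier of $\dot H^{\alpha}$ for $\alpha<1/2$, and dualize. This avoids weighted singular-integral theory and makes the uniformity transparent from scaling; the paper's argument is shorter once HMW is quoted. Three small points to add to make your route complete: (i) in the Gagliardo-seminorm splitting, the ``boundary'' contribution is $\int_I |u(x)|^2\, \mathrm{dist}(x,\partial I)^{-2\alpha}dx$, and controlling it by $\|u\|_{\dot H^\alpha}^2$ is precisely the fractional Hardy (Herbst) inequality, which is where $\alpha<1/2$ enters--this should be said explicitly; (ii) your reduction to $[0,1]$ only treats bounded intervals, whereas the paper later applies the lemma to the half-line $[u,\infty)$; this case follows either by the same Gagliardo/Hardy argument for a Heaviside cut-off (scale invariant, one boundary point) or, for compactly supported $f$, from the uniform bound for $[u,u+N]$ with $N$ large; (iii) after extending $U_{\alpha,I}$ by density from $\{K_\alpha f:\ f\in C_c^\infty(\R)\}$, the identity \eqref{eq:U} for general compactly supported $f\in L^2(\R)$ needs a limiting step, approximating $f$ by test functions supported in a fixed compact $J$ and using the $L^2(J)\to L^2(\R)$ boundedness of $K_\alpha$ established just before the lemma. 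All three are routine repairs, not genuine gaps.
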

\begin{proof} We have shown that if $f \in L^2(\mathbb{R})$ is compactly supported, then $K_{\alpha} f \in L^2(\mathbb{R})$. Therefore, we need to show that for all $h \in L^2(\mathbb{R})$, the operator $U_{\alpha, I}$, \CR{defined a priori only on $C_0^\infty(\R)$}
\begin{align}
\notag U_{\alpha,I}h \coloneqq K_\alpha M_{\chi_I}K_{-\alpha}h, 
\end{align}
extends to a well defined bounded operator from $L^2(\mathbb{R})$ to $L^2(\mathbb{R})$ \CR{(note that the factor $K_{-\alpha}$ alone does not have this property, but $K_{-\alpha}f\in L^{2}(\R)$ for $f\in C_0^\infty(\R))$. }
\noindent Again we use that $C_c^{\infty}(\mathbb{R})$ is dense in $L^2(\mathbb{R})$. Let $h \in C_c^{\infty} (\mathbb{R})$. Then, as before, by the Paley-Wiener theorem and the \CR{Plancherel's} theorem, $U_{\alpha, I}$ is a \CR{well-defined  operator} from $C_c^{\infty}(\R)$ to $L^2(\R)$, and~\eqref{eq:U} holds. To extend it to all of $L^2(\R)$, we show the following a priori norm bound 
\begin{equation}
\label{eq:U2} \int_\R |K_\alpha M_{\chi_I}K_{-\alpha}h |^2 \leq c(\alpha)\int_\R |h|^2
\end{equation}
for $h = K_{\alpha} f$, where \CR{$f \in C_0^\infty (\R)$ -- such functions are clearly dense in $L^2(\R).$} Note, that $f = K_{-\alpha} h$. From Parseval identity we get
\begin{align}
\notag \int_\R |K_\alpha M_{\chi_I}K_{-\alpha} f|^2 = \int_\R |K_\alpha M_{\chi_I} f|^2 = (2\pi)^{-1}\int_\R |\xi|^{-2\alpha}|\widehat{M_{\chi_I} f} (\xi)|^2 d\xi,
\end{align}
\noindent and, by the convolution theorem, 
\begin{align}
\notag \widehat{M_{\chi_I} f} =(2\pi)^{-1}\widehat{\chi_I} * \widehat{f},
\end{align}
so everything is well-defined as $\widehat f$ decays at any polynomial rate.
The right-hand side of~\eqref{eq:U2}  can be written as
\begin{align}
\notag \int_\R |h|^2 =\int_{\R} |K_\alpha f|^2 = \int_\R |\xi|^{-2\alpha}|\widehat{f}(\xi)|^2 d\xi.
\end{align}
To establish~\eqref{eq:U2}, we need to show
\begin{align}
\label{eq:U3} \int_\R |\xi|^{-2\alpha}|\widehat{M_{\chi_I} f} (\xi)|^2 d\xi \leq c(\alpha) \int_\R |\xi|^{-2\alpha}|\widehat{f}(\xi)|^2 d\xi.
\end{align}
We recall some properties of the Hilbert transform (see~\cite{Gr1},~\cite{Duo-2001}), \CR{defined (at least)  for test functions as the singular value integral}
\begin{align}
\notag \mathcal{H} g (x) \coloneqq \lim_{\ep \to 0} \frac{1}{\pi} \int_{|x - y | \geq \ep} \frac{g(y)}{x - y} dy,
\end{align}  
where $g \in L^2(\R)$. \CR{First, the Hilbert transform has a bounded extension to $L^2(\R)$. The boundedness is most easily seen by the fact that it is} a multiplier operator with bounded \CR{symbol:}
\begin{align}
\notag \mathcal{F} ( \mathcal{H} g ) (\xi) = ( - i \text{sgn}(\xi) ) \widehat{g}(\xi).
\end{align}
Moreover (see~\cite[Section 3.5]{Duo-2001}),
\begin{align}
\notag \mathcal{F} \left(\frac{i}{2} (N_a \mathcal{H} N_{-a} - N_b \mathcal{H} N_{-b} ) g\right)(\xi)   =  \chi_{(a,b)} (\xi) \widehat{g}(\xi),
\end{align} 
where $N_a$ is the isometric multiplication operator given by \CR{$N_a = M_{e^{i   a x}}$}. Applying $\mathcal{F}^{-1}$ to the above (and recalling that \CR{$\mathcal{F}^{-1} f (\xi)  = \frac{1}{2\pi}\widehat{f} (-\xi)$) yields} 
\begin{align}
\notag \left(\frac{i}{2} (N_a \mathcal{H} N_{-a} - N_b \mathcal{H} N_{-b} ) g\right)(\xi) & = \CR{\frac{1}{2\pi}} \int_{\R} \widehat{\chi_{(a,b)}} (-y) g(x -y) dy \\ 
\notag & = \CR{\frac{1}{2\pi} \widehat{\chi_{(-b,a)}} }* g (\xi).
\end{align} 
In particular,  if $I=(-b,-a)$, we obtain up to an absolute constant, 
\begin{align}
\notag \left|\left(\frac{i}{2} (N_a \mathcal{H} N_{-a} - N_b \mathcal{H} N_{-b} ) \widehat{f}\right)(\xi)\right|^2 = \CR{c}|\widehat{M_{\chi_I} f }(\xi)|^2.
\end{align}
Then, since $N_a$ is isometric \CR{in the weighted space}, in order to establish~\eqref{eq:U3} it suffices to show that 
\begin{align}
\label{eq:U4} \int_{\R} |\mathcal{H} \widehat{f}(\xi)|^2 |\xi|^{-2\alpha} d \xi \leq c(\alpha) \int_{\R} |\widehat{f}(\xi)|^2 |\xi|^{-2 \alpha} d \xi. 
\end{align}  
At this point we recall a more general result due to Hunt et al~\cite{Hunt-1973}.
\begin{proposition} If $1 < p < \infty$ and $W(x)$ is nonnegative, the following are equivalent. 
\begin{enumerate}
\item There is a constant $C$ independent of $I$, such that for every interval $I$, 
\begin{align}
\label{eq:ap_weight} \left[ \frac{1}{|I|} \int_I W(x) dx \right] \left[ \frac{1}{|I|} \int_I W(x)^{-1 / (p-1)}dx\right]^{p-1} \leq C.
\end{align}
\item There is a constant $C$, independent of $f$, such that 
\begin{align}
\label{eq:hil_norm_weight} \int_{- \infty}^{\infty} |\mathcal{H} f (x)|^p W(x) dx \leq C \int_{- \infty}^{\infty} |f(x)|^p W(x) dx. 
\end{align}
\end{enumerate}
\end{proposition}
Nonnegative functions $W(x)$ that satisfy~\eqref{eq:ap_weight} are called $A_p$ weights. We can apply~\eqref{eq:hil_norm_weight} in order to establish~\eqref{eq:U4} provided that $|x|^{-2 \alpha}$ is an $A_2$ weight. This fact is established for $\alpha \in (0, 1/2)$ in, e.g.,~\cite[Example 9.1.7.]{Gr2}

\end{proof}

\begin{remark}\label{rem:operator}\CR{\rm The previous lemma is crucial in our estimation of the singular values, since it verifies that the singular values will dominate those of any \emph{localization } of the multiplier $g$. This is not at all obvious, since $g$ typically changes sign in our situation, and a potential cancellation phenomenon could prevent the needed estimate.}

\end{remark}

We now state the second technical lemma. 
\begin{lemma}\label{le:jelppi} Let $\alpha\in (0,1/2)$ and  $J\subset \R$ be a finite interval. Then the operator $\mathcal{M}_{\alpha, J}: = M_{\chi_J}K_\alpha M_{\chi_J}$ is \CR{bounded, self-adjoint, and positive},  and its singular value sequence is of the form  $(|J|^\alpha\widetilde \mu_k)_{k\geq1}$, where $(\widetilde \mu_k)_{k\geq 1}$ is the singular value sequence of $\mathcal{M}_{\alpha, [0,1]}$. Moreover, $\widetilde \mu_k>0$ for all $k\geq 1$, and 
\begin{align}
\notag\widetilde\mu_k\sim ck^{-\alpha}\quad\textrm{as}\quad k\to\infty.
\end{align}
\end{lemma}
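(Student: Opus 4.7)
My plan splits the argument into three tasks of increasing difficulty: (i) boundedness, self-adjointness, and positivity of $\mathcal{M}_{\alpha,J}$; (ii) a unitary reduction that identifies the spectrum of $\mathcal{M}_{\alpha,J}$ with $|J|^\alpha$ times the spectrum of $\mathcal{M}_{\alpha,[0,1]}$; and (iii) the Weyl-type asymptotic $\widetilde\mu_k \sim ck^{-\alpha}$ for $\mathcal{M}_{\alpha,[0,1]}$.

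For (i), boundedness is already essentially in hand: the preparatory discussion before the proof of Lemma~\ref{lem:part_new_operator} shows that $K_\alpha M_{\chi_J}$ extends to a bounded operator on $L^2(\R)$, so $\mathcal{M}_{\alpha,J} = M_{\chi_J}(K_\alpha M_{\chi_J})$ is bounded as a composition. Self-adjointness and positivity follow at once from the Plancherel identity applied to test functions,
\[
\langle \mathcal{M}_{\alpha,J}f,g\rangle \;=\; \frac{1}{2\pi}\int_\R |\xi|^{-\alpha}\widehat{\chi_J f}(\xi)\overline{\widehat{\chi_J g}(\xi)}\,d\xi,
\]
which is Hermitian-symmetric and, at $g=f$, manifestly nonnegative; the identity extends to all of $L^2(\R)$ by continuity. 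Once $\mathcal{M}_{\alpha,J}$ is known to be compact (step (iii)), its singular values therefore coincide with its eigenvalues listed in decreasing order.

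Step (ii) is a short dilation/translation computation. Translation invariance of the Fourier multiplier $K_\alpha$ gives unitary equivalence of $\mathcal{M}_{\alpha,J}$ and $\mathcal{M}_{\alpha,[0,L]}$ for $L=|J|$. Introducing the unitary dilation $V_Lf(x) = L^{1/2}f(Lx)$, one checks directly that $V_L^{-1}M_{\chi_{[0,1]}}V_L = M_{\chi_{[0,L]}}$, while homogeneity of the symbol $|\xi|^{-\alpha}$ yields $V_L^{-1}K_\alpha V_L = L^{-\alpha}K_\alpha$. Chaining these identities gives $V_L^{-1}\mathcal{M}_{\alpha,[0,1]}V_L = L^{-\alpha}\mathcal{M}_{\alpha,[0,L]}$, so the singular values indeed scale as $|J|^\alpha\widetilde\mu_k$. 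Strict positivity $\widetilde\mu_k>0$ then follows from injectivity of $\mathcal{M}_{\alpha,[0,1]}$ on its natural invariant subspace $L^2([0,1])$: if $\mathcal{M}_{\alpha,[0,1]}f=0$, the quadratic-form identity above forces $\widehat{\chi_{[0,1]}f}\equiv 0$, hence $f\equiv 0$ on $[0,1]$.

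The heart of the proof is (iii). The operator $\mathcal{M}_{\alpha,[0,1]} = P_{[0,1]}K_\alpha P_{[0,1]}$ is the compression to $L^2([0,1])$ of the Fourier multiplier $(-\Delta)^{-\alpha/2}$, a homogeneous symbol of order $-\alpha$. Its kernel $d_\alpha|x-y|^{\alpha-1}$ is weakly singular but locally integrable for $\alpha>0$, so compactness (and a first, rough, Schatten-class bound) follows by approximating the kernel in operator norm by bounded truncations, each of which yields a Hilbert-Schmidt operator. The sharp asymptotic $\widetilde\mu_k\sim ck^{-\alpha}$ I would then deduce from the classical Birman--Solomyak spectral theory for truncated Wiener--Hopf/Toeplitz operators with homogeneous symbols -- equivalently, from the well-known Weyl asymptotic $\lambda_k\sim(\pi k)^\alpha$ for the $\alpha/2$-fractional Dirichlet Laplacian on $[0,1]$, whose inverse is comparable to $\mathcal{M}_{\alpha,[0,1]}$ via the min-max principle, yielding the asymptotic with an explicit constant $c=c(\alpha)$. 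I expect this spectral-asymptotics step to be the main obstacle: the first two tasks are essentially formal, whereas here one must invoke genuine pseudodifferential/operator-theoretic input that goes beyond the soft manipulations used in the rest of the paper.
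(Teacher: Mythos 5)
Your proposal is essentially the paper's own proof. Steps (i) and (ii) coincide with it: the paper establishes self-adjointness and positivity through exactly the Plancherel/quadratic-form identity you write, and obtains the scaling $(|J|^\alpha\widetilde\mu_k)$ by conjugating with translations and the dilation unitary, using homogeneity (the paper phrases it through the kernel $h(bz)=b^{\alpha-1}h(z)$ rather than the symbol $|\xi|^{-\alpha}$, which is the same thing). For step (iii) the paper simply quotes Dostani\'c's theorem for the operator with kernel $|x-y|^{\alpha-1}$ on an interval, i.e.\ precisely the truncated-convolution asymptotics you propose to import from Birman--Solomyak/Widom theory; either citation does the job. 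Two remarks. First, your alternative route ``the inverse of the fractional Dirichlet Laplacian is comparable to $\mathcal{M}_{\alpha,[0,1]}$ via min-max'' only transfers two-sided bounds, hence yields $\widetilde\mu_k\asymp k^{-\alpha}$ but not the sharp asymptotic $\widetilde\mu_k\sim ck^{-\alpha}$ claimed in the lemma (the compression $P_J(-\Delta)^{-\alpha/2}P_J$ is not the inverse of a Dirichlet realization); so rely on the truncated-operator asymptotics themselves, as in your primary citation or Dostani\'c. (For the way the lemma is used later in the paper, the two-sided bound would in fact suffice.) Second, your compactness argument via truncation of the weakly singular kernel is actually more careful than the paper's: since $\alpha\in(0,1/2)$, the kernel $|x-y|^{\alpha-1}$ is \emph{not} square-integrable on $J\times J$ (that would require $\alpha>1/2$), so the Hilbert--Schmidt shortcut does not apply verbatim and an approximation argument of the kind you sketch is the right fix. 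Your injectivity argument for $\widetilde\mu_k>0$ is also fine and slightly more explicit than the paper's appeal to simplicity of the eigenvalues.
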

\begin{proof} Recall that $K_{\alpha} M_{\chi_J}$ is bounded since $J$ is a finite interval, and note that$M_{\chi_J}$ is an orthogonal projection and  has norm $1$, especially $M_{\chi_J}^2=M_{\chi_J}$ and 
$M_{\chi_J}^*=M_{\overline{\chi_J}}=M_{\chi_J}.$ Thus $\mathcal{M}_{\alpha, J}$ is bounded. Finally, by \CR{Plancherel's} theorem, for any $f,g\in L^2(\R)$
\begin{align}
\notag \int_\R (M_{\chi_J}K_\alpha M_{\chi_J}f)(x) \CR{\overline{g(x)}}dx  = & \int_\R (K_\alpha M_{\chi_J}f)(x) \CR{\overline{M_{\chi_J}g(x)}} dx \\
\notag = &\int_\R |\xi|^{-\alpha} \widehat{M_{\chi_J}f} (\xi)  \CR{\overline{\widehat{M_{\chi_J}g} (\xi)} }d\xi.
\end{align}
This shows that $\mathcal{M}_{\alpha, J}$ is self-adjoint and positive (take $g = f$ above). 

To establish the relation of the singular values of $\mathcal{M}_{\alpha, J}$ to the ones of $\mathcal{M}_{\alpha, J}$ we will use the fact that the spectrum of two unitarily equivalent operators is identical. First, assume $J = [b, c]$ and let $\tau_b f (x) \coloneqq f(x -b)$ be the translation by $b$. The operator $\tau_b$ is unitary with an adjoint given by $\tau_{-b} f(x) = f(x + b)$. Then, note that
\begin{align}
\notag \tau_{-b} \mathcal{M}_{\alpha, [b,c]} \tau_b f (x)  = & \chi_{[b,c]} (x + b) \int_\R h( x + b - y) \chi_{[b,c]} (y) f(y - b) dy \\
\notag = & \chi_{[0, c-b]} (x) \int_\R h(x - y') \chi_{[0, c-b]} (y') f(y') dy' \\
\notag = & \mathcal{M}_{\alpha, [0, c-b]}(x).
\end{align} 
Therefore, the singular values of $\mathcal{M}_{\alpha, [b,c]}$ are the same as the ones of $\mathcal{M}_{\alpha, [0, c-b]}$. Next, set $J = [0, b]$ and consider the rescaling unitary operator $\mu_b f (x) \coloneqq \sqrt{b} f(bx)$ with an inverse given by $\CR{\mu_{b}^{-1}} f(x) = f(x/b) / \sqrt{b}$. Now,
\begin{align}
\notag \mu_{b} \mathcal{M}_{\alpha, [0,b]}\CR{ \mu_{b}^{-1}} f (x)  = & \CR{\sqrt{b}}\chi_{[0,b]} (bx)  \int_\R h( bx - y) \chi_{[0,b]} (y) f(y/b)/\sqrt{b} dy \\
\notag = & \chi_{[0, 1]} (x) \int_\R h(b(x - y')) \chi_{[0, 1]} (y') f(y')\CR{b}  dy' \\
\notag = & b^{\alpha} \mathcal{M}_{\alpha, [0, 1]}(x),
\end{align} 
where we have used that $h(bz) = b^{\alpha -1} h(z)$. Thus, the correspondence between the singular values of $\mathcal{M}_{\alpha, J} $ and $\mathcal{M}_{\alpha, [0,1]}$ is established. 

Next, for the proof of the decay of the singular values of $\mathcal{M}_{\alpha, [0,1]}$, we need to show that $\mathcal{M}_{\alpha, J}$ is a compact operator. Recall~\eqref{eq:ka_conv}:
\begin{align}
\notag K_{\alpha} f (x)  = \int_\R h(x - y) f(y) dy = \int_\R k(x, y) f(y) dy. 
\end{align}  
Integral operators are compact if $\int_{\CR{\R^2}} |k(x, y)|^2 dx dy < \infty$. However, for the operator $\mathcal{M}_{\alpha, J}$ one only needs to show that  $\int_{\CR{J^2}} |k(x, y)|^2 dx dy < \infty$ which is indeed the case for $k(x, y) = d_{\alpha} |x- y|^{\alpha -1}$ \CR{with $\alpha>1/2.$}

To finish the proof, recall a result by Dostanic~\cite[Theorem 1]{Dostanic-1998} where the decay of the singular values is established for $ \mathcal{M}_{\alpha, [-1,1]} / d_{\alpha}$.
\begin{proposition} Let $A : L^2(-1,1) \to L^2 (-1,1)$ be the \CR{self-adjoint operator} defined by
\begin{align}
\notag Af (x) = \int_{-1}^1 |x - y|^{\alpha -1} f(y) dy, \quad \quad 0 < \alpha < 1.
\end{align}
Then, the eigenvalues of $A$  \CR{are simple} and satisfy $\lambda_n (A) 	\sim cn^{-\alpha}$ with a constant $c=c(alpha)>0$.
\end{proposition}

\end{proof}
Next, we finish the proof of~\eqref{eq:bound_svalue}. We have $t_0 = 0 < t_1 < \cdots < t_n  \leq 1$. Set $I_j=[t_{j-1},t_j]$ for $1 \leq j \leq n$. Then $g(x) = \sum_{j =1}^n (\xi_j - \xi_{j-1} )\chi_{I_j}$. Fix $j \in [1, n]$ and let $  \CR{ (\|U_{\alpha,I_{j}}\|)^{-1}\geq C(\alpha)>0}$ with $U_{\alpha,I_{j}}$ as in Lemma~\ref{le:piecing}. Recall that if $A$ and $B$ are bounded operators on $L^2(\R)$, then by the Minimax principle~\cite[Corollary III.1.2]{Bhatia-1997}, 
\begin{align}
\notag \mu_n(AB) = \max_{\substack{M \subset L^2(\R) \\ \dim M = n}} \min_{\substack{ x \in M \\ ||x|| = 1}}\CR{\| AB x\|}
\end{align} 
and then $\mu_n(AB) \leq \Vert A\Vert \mu_n(B)$ and $\mu_n(AB) \leq \Vert B\Vert \mu_n(A)$. Therefore,
\begin{align}
\notag \mu_n(K_\alpha M_gK_\alpha) \geq (\|U_{\alpha,I_{j}}\|)^{-1} \mu_n(U_{\alpha,I_{j}}K_\alpha M_gK_\alpha)= C(\alpha) \mu_n(K_\alpha M_{\chi_{I_{j}}}M_gK_\alpha).
\end{align}
Moreover, 
\begin{align}
\notag \mu_n(K_\alpha M_{\chi_{I_{j}}}M_gK_\alpha) = \mu_n(K_\alpha ( (\xi_{j} - \xi_{j-1}) M_{\chi_{I_{j}}})K_\alpha) =|\xi_{j} - \xi_{j-1}|\mu_n(K_\alpha M_{\chi_{I_{j}}}K_\alpha). 
\end{align}
Then, since $\Vert M_{\chi_{I_{j}}} \Vert = 1$ and $ M_{\chi_{I_{j}}} ^2= M_{\chi_{I_{j}}} $ we have
\begin{align}
\notag \mu_n(K_\alpha M_{\chi_{I_{j}}}K_\alpha) \geq \mu_n(M_{\chi_{I_{j}}} K_\alpha M_{\chi_{I_{j}}}K_\alpha M_{\chi_{I_{j}}}) = \mu_n\left((M_{\chi_{I_{j}}}K_\alpha M_{\chi_{I_{j}}})^2\right). 
\end{align}
Next, by Lemma~\ref{le:jelppi} 
\begin{align}
\notag \mu_n\left((M_{\chi_{I_{j}}}K_\alpha M_{\chi_{I_{j}}})^2\right) = \mu_n(M_{\chi_{I_{j}}}K_\alpha M_{\chi_{I_{j}}})^2 =  |t_j - t_{j-1}|^{2 \alpha} (\widetilde \mu_n)^2,
\end{align}
where we have used that if $A$ is self-adjoint, $\mu_n(A^2) = \mu_n (A)^2$. Therefore, 
\begin{align}
\notag \mu_n(K_\alpha M_gK_\alpha) \geq C(\alpha) \max_{1 \leq j \leq n} |\xi_{j} - \xi_{j-1}| |t_j - t_{j-1}|^{2 \alpha} (\widetilde \mu_n)^2,
\end{align} 
where $\widetilde \mu_n \sim n^{-\alpha}$. Finally,~\eqref{eq:bound_svalue} follows with $\alpha = H/2$. 
\end{proof}

\subsection{Integral estimates}
\begin{proof}[Proof of Lemma~\ref{lem:int_svalue}] Note, that for any $N > 0$, 
\begin{align}
\notag 1 + \sum_{k = 1}^N 4s^2 \widetilde \mu_k^4 \leq \prod_{k = 1}^N (1 + 4s^2 \widetilde \mu_k^4) \leq \CR{\exp( \sum_{k=1}^N 4s^2 \widetilde \mu_k^4 ).}
\end{align}
By Lemma \ref{le:jelppi}, $\widetilde\mu_k^4\; \CR{\approx} \; k^{-2H}$ and  $\sum_{k = 1}^{\infty} 4s^2 \widetilde \mu_k^4  < \infty$ since $H \in (1/2 ,1 )$. Therefore, $G(s)$ converges and also $G(s) > 0$.
Next, let $ a > 0$ be such that $4\widetilde\mu_k^4 \geq a k^{-2H}$ for all $k \geq 1$ and set $z = as^2$. Then, the elementary inequality $\log(1+x)\geq x/2$ for $x\in [0,1)$ yields for $z\geq 1$ that
\begin{eqnarray*}
-8\log G(s)&=&2\sum_{k=1}^\infty \log (1+ 4s^2\widetilde \mu_k^4)\geq  2\sum_{k=1}^\infty \log (1+ zk^{-2H})\geq z\sum_{ k\geq z^{1/2H}}k^{-2H}\\
&\geq& z\int_{2z^{1/2H}}x^{-2H}= 2^{1-2H}(2H-1)^{-1}z^{1/2H}\geq c_0z^{1/2H},
\end{eqnarray*}
where $c_0 > 0$, and in general $-8\log G(s)\geq c_0(z^{1/2H}-1) $.
Thus, $G(s)\leq c_2\exp(-c_1s^{1/H})$, where $c_1, c_2 > 0$, and we obtain 
\begin{align}
\notag \int_0^\infty s^{\beta-1}G(s)ds \leq & \int_0^\infty  c_2\exp(-c_1s^{1/H})s^{\beta-1}ds 
\;= \; c_2 H \int_0^{\infty} \exp( -c_1 x) x^{\beta H - 1} dx \\
\notag = & c_2 H c_1^{- \beta H} \Gamma(\beta H) \CR{\;\leq\; c_3^{\beta H} \Gamma(\beta H) }.
\end{align}
as desired.

\end{proof} 

\begin{proof}[Proof of Lemma~\ref{lem:alku}] Our goal is to estimate the integral 
 \begin{align}
\notag I_0 \coloneqq \int_{S^{n-1}}\int_{\substack{t_1+t_2+\ldots +t_n\leq 1\\ t_1,\ldots, t_n\geq 0}}\prod_{j=1}^n |y_j|^{\gamma_j}(f(t,y))^{-n(1+\gammav)}dt\mathcal{H}^{n-1}(dy),
 \end{align}
where $f: \R_+^n \times \R^n \to (0, \infty)$ is given by
\begin{align}
\notag f(t, y) = t_1^H |y_1| \vee t_2^H |y_2| \vee \cdots \vee t_n^H |y_n|,
\end{align}
and $\gammav:=n^{-1}\sum_{j=1}^n\gamma_n$ for $\gamma_1,\ldots \gamma_n \geq 0$.
First note that $f$ is $H$-homogeneous with respect to $t$ and 1-homogeneous with respect to $y$:
 \begin{equation}\label{eq:ehto}
 f(t,y)=|t|^H |y|f(t^0,y^0),
 \end{equation}
 where $y^0\coloneqq y/|y|$ and $t^0 \coloneqq t/|\sum_{j=1}^n t_j|$. Observe that we use the standard Euclidean norm for the $y$-variable and the $\ell^1$-norm for the $t$-variable.
Next, introduce the related integral
 \begin{align}
 \notag I_1\coloneqq \int_{S^{n-1}}\int_{\substack{t_1+t_2+\ldots +t_n=1\\ t_1,\ldots, t_n\geq 0}}\prod_{j=1}^n |y_j|^{\gamma_j}(f(t,y))^{-n(1+\gammav)}\mathcal{H}^{n-1}(dt)\mathcal{H}^{n-1}(dy),
\end{align}
\noindent 
 Using the $H$-homogeneity  of $f$ with respect to $t$ and noting that the distance of the origin from the hyperplane $t_1+\ldots + t_n=1$ equals $n^{-1/2}$, we obtain the following relation between $I_0$ and $I_1$:
\begin{align}
\notag I_0=& I_1\int_0^{n^{-1/2}} (u/n^{-1/2})^{-n(1+\gammav)H}(u/n^{-1/2})^{n-1}du,
\end{align}
where the term $(u/n^{-1/2})^{n-1}$ arises from the Jacobian and the rescaling. The above can be further simplified:
\begin{align}
\label{eq:kaali} I_0  = I_1n^{-1/2}\int_0^1v^{n(1-H(1+\gammav))-1}dv  = (1-H(1+\gammav))^{-1}n^{-3/2}I_1,
\end{align}
where one naturally needs to assume that $H(1+\gammav)<1$.

We proceed by showing the following more general result. 
 \begin{lemma}\label{le:helppi10}
 Assume that $f:\R_+^n\times\R^n\to (0,\infty)$ satisfies \eqref{eq:ehto}.  Let $H\in (0,1)$ and  $\gamma \coloneqq (\gamma_1, \ldots , \gamma_n) \in \R$ with $0\leq \gamma_j  <H^{-1}-1$  for every $j=1,\ldots , n$. Set $\gammav:=n^{-1}\sum_{j=1}^n\gamma_j$. Then
 \begin{align}
 \label{eq:jelppi100} I_0 =  C(n, \gamma, H)  \int_{(\R_+)^n\times\R^n}e^{-f(t,y)}e^{-(t_1+\ldots +t_n)}\prod_{j=1}^n|y_j|^{\gamma_j}dydt
 \end{align}
where
\begin{align}
\notag C(n, \gamma, H) \coloneqq \frac{1}{n^{1/2}\Gamma (n(1+\gammav))\Gamma(n(1-H(1+\gammav)+1)}.
\end{align}
Moreover, if $f$ is such that
 \begin{align}
\notag  f(y,t)=g(t_1^H |y_1|,\ldots, t_n^H |y_n|),
 \end{align}
where $g$ is $1-$homogeneous, then
  \begin{equation}\label{eq:jelppi200}
I_0
 =  C(n, \gamma, H) \prod_{j=1}^n\Gamma(1-H(1+\gamma_j)) \int_{\R^n}e^{-g(|y_1|,\ldots, |y_n|)}|y_1|^{\gamma_1}\cdots |y_n|^{\gamma_n}dy.
 \end{equation}
\end{lemma}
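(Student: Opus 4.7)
The plan is to prove \eqref{eq:jelppi100} by evaluating the right-hand side directly via polar-like decompositions in both variables, reducing the remaining surface integral to $I_1$ and then invoking the already-established identity \eqref{eq:kaali} to recover $I_0$. Concretely, I would decompose $y \in \R^n$ as $y = r\omega$ with $r=|y|$ and $\omega \in S^{n-1}$, and decompose $t \in (\R_+)^n$ as $t = \rho\sigma$ with $\rho = \sum_j t_j$ and $\sigma$ in the standard simplex $\Delta_{n-1}$. Then $dy = r^{n-1}dr\, d\mathcal{H}^{n-1}(\omega)$ and $dt$ factors as a constant (involving $n^{-1/2}$) times $\rho^{n-1}d\rho\, d\mathcal{H}^{n-1}(\sigma)$. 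The homogeneity hypothesis \eqref{eq:ehto} then gives the clean factorization $f(t,y) = \rho^H r f(\sigma,\omega)$, together with $\sum_j t_j = \rho$ and $\prod_j|y_j|^{\gamma_j} = r^{n\gammav}\prod_j|\omega_j|^{\gamma_j}$.

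With these substitutions, the right-hand side of \eqref{eq:jelppi100} factors into the angular integral defining $I_1$ times two successive Gamma integrals: the inner one in $r$ yielding $\Gamma(n(1+\gammav))[\rho^H f(\sigma,\omega)]^{-n(1+\gammav)}$, followed by the one in $\rho$ yielding $\Gamma(n(1-H(1+\gammav)))$. Both integrals converge because $\gamma_j \in [0, H^{-1}-1)$ forces $H(1+\gammav) < 1$. Combining with \eqref{eq:kaali} and applying the functional equation $\Gamma(x+1) = x\Gamma(x)$ to convert $\Gamma(n(1-H(1+\gammav)))$ into $\Gamma(n(1-H(1+\gammav))+1)$ then yields \eqref{eq:jelppi100} with the claimed constant $C(n,\gamma,H)$.

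For the special form \eqref{eq:jelppi200} with $f(t,y) = g(t_1^H|y_1|,\ldots,t_n^H|y_n|)$ and $g$ being $1$-homogeneous, I would start from \eqref{eq:jelppi100} and apply the decoupling change of variables $z_j = t_j^H y_j$ to the right-hand side. This gives $|y_j|^{\gamma_j}dy_j = t_j^{-H(1+\gamma_j)}|z_j|^{\gamma_j} dz_j$, and since $t_j^H|y_j| = |z_j|$ the argument of $g$ depends only on the $|z_j|$'s. Hence the $t$-integral separates completely into a product of one-dimensional Gamma integrals, each evaluating to $\int_0^\infty e^{-t_j} t_j^{-H(1+\gamma_j)} dt_j = \Gamma(1-H(1+\gamma_j))$, convergent under the same hypothesis. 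The remaining $z$-integral is precisely the one appearing on the right-hand side of \eqref{eq:jelppi200}.

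The main technical subtlety is the $n^{1/2}$ normalization of the polar decomposition on $(\R_+)^n$: the $\ell^1$-simplex $\Delta_{n-1}$ sits at Euclidean distance $n^{-1/2}$ from the origin, which is the ultimate source of the $n^{1/2}$ factor in $C(n,\gamma,H)$ (mirroring the $n^{3/2}$ factor already visible in \eqref{eq:kaali}). Beyond this constant-keeping, the argument reduces to routine Gamma-function bookkeeping.
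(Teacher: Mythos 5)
Your route is exactly the paper's: polar coordinates in $y$, the decomposition $t=\rho\sigma$ with $\sigma$ on the simplex, two successive Gamma integrals reducing the exponential integral to $I_1$, the scaling relation \eqref{eq:kaali}, and, for \eqref{eq:jelppi200}, the decoupling substitution $z_j=t_j^H y_j$ producing the factors $\Gamma(1-H(1+\gamma_j))$; the convergence remarks (all hinging on $H(1+\gamma_j)<1$) are also the right ones. The structure is sound.

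What does not go through as you state it is the constant. If you keep the Jacobian factor $n^{-1/2}$ in $dt=n^{-1/2}\rho^{n-1}\,d\rho\,\mathcal{H}^{n-1}(d\sigma)$ --- which is indeed the correct normalization, as one checks from $\int_{\{t\geq 0,\ \sum_j t_j\leq 1\}}dt=1/n!$ together with $\mathcal{H}^{n-1}(\{u\in\R_+^n:\sum_j u_j=1\})=\sqrt{n}/(n-1)!$ --- then the computation you describe gives
\begin{equation*}
J:=\int_{(\R_+)^n\times\R^n}e^{-f(t,y)}e^{-\sum_j t_j}\prod_{j=1}^n|y_j|^{\gamma_j}\,dy\,dt
= n^{-1/2}\,\Gamma\big(n(1+\gammav)\big)\,\Gamma\big(n(1-H(1+\gammav))\big)\,I_1 ,
\end{equation*}
and combining with $I_1=n^{3/2}(1-H(1+\gammav))\,I_0$ from \eqref{eq:kaali} and $x\Gamma(x)=\Gamma(x+1)$ (which absorbs a full factor $n$) you arrive at $I_0=J/\big(\Gamma(n(1+\gammav))\,\Gamma(n(1-H(1+\gammav))+1)\big)$, with \emph{no} residual $n^{1/2}$: the $n^{-1/2}$ from the Jacobian exactly cancels the $\sqrt{n}$ left over from the $n^{-3/2}$ in \eqref{eq:kaali}. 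So your closing claim --- that the $n^{-1/2}$ normalization is the source of the $n^{1/2}$ in $C(n,\gamma,H)$ and that the stated constant comes out --- is not what your own steps yield; the constant you actually obtain differs from the one in the lemma by $\sqrt{n}$. A quick test case ($f(t,y)=(t_1+\cdots+t_n)^H|y|$, all $\gamma_j=0$, for which $I_0=\omega_{n-1}/(n!(1-H))$ and $J=\omega_{n-1}\Gamma(n(1-H))$ with $\omega_{n-1}=\mathcal{H}^{n-1}(S^{n-1})$) confirms that the version \emph{without} $n^{1/2}$ is the correct identity. The paper's proof writes the change of variables as $s^{n-1}r^{n-1}\,dr\,ds\,\mathcal{H}^{n-1}(du)\,\mathcal{H}^{n-1}(dw)$, i.e.\ it drops the $n^{-1/2}$, which is precisely how the stated constant arises; your more careful bookkeeping thus exposes a spurious $\sqrt{n}$ in the statement (and correspondingly in \eqref{eq:alku}), harmless for everything downstream since it is absorbed into $C^n$. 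To make your write-up correct, either carry the constants through and state the result with $C=\big(\Gamma(n(1+\gammav))\Gamma(n(1-H(1+\gammav))+1)\big)^{-1}$, or drop the assertion that the displayed $C(n,\gamma,H)$ is recovered.
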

\begin{proof} We  first compute $I_1$ by
moving to radial variables in $y$ and $t$. Thus,  let $r:=|y|$, $w:=y/r,$ $s:=t_1+t_2+\ldots + t_n$ and $u:=t/s$ and   note as a first step that
\begin{eqnarray*}
&&\int_{(\R_+)^n\times\R^n}e^{-f(t,y)}e^{-(t_1+\ldots +t_n)}|y_1|^{\gamma_1}\cdots |y_n|^{\gamma_n}dydt\\
&=&\int_{|w|=1} \int_{\substack{u_1+u_2+\ldots +u_n=1\\ u_1,\ldots, u_n\geq 0}} \int_0^\infty\int_0^\infty e^{-rs^H f(u,w)} e^{-s} |w_1|^{\gamma_1}\cdots |w_n|^{\gamma_n} r^{n \gammav} \\
&&\phantom{kukkuukukkuukaukana}s^{n-1}r^{n-1}dr ds\mathcal{H}^{n-1}(du)\mathcal{H}^{n-1}(dw)\nonumber
\end{eqnarray*}
We compute first the integral with respect to $r$ by making a change of variables $r':=rs^H f(u,t)$ and then with respect to $s$ to obtain
\begin{eqnarray*}
&&\int_0^\infty\int_0^\infty e^{-rs^H f(u,w)}e^{-s}s^{n-1}dsr^{n(1+\gammav)-1}dr\\
& = & \int_0^\infty e^{-s} (f(u,w))^{- n ( 1 + \gammav)} s^{n (1 - H ( 1 + \gammav) )-1} \int_0^\infty e^{-r'} (r')^{n ( 1 + \gammav ) -1} dr' ds  \\
&=& \Gamma(n(1+\gammav)) (f(u,w))^{-n(1+\gammav)} \int_0^\infty e^{-s}s^{n(1-H(1+\gammav))-1}ds\\& = &\Gamma(n(1+\gammav))\Gamma ((1-H(1+\gammav))n) (f(u,w))^{-n(1+\gammav)} .
\end{eqnarray*}
Next, recall that $I_1$ is given by
\begin{align} 
\notag  I_1 = \int_{|w|=1} \int_{\substack{u_1+u_2+\ldots +u_n=1\\ u_1,\ldots, u_n\geq 0}} |w_1|^{\gamma_1}\cdots |w_n|^{\gamma_n} (f(u,w))^{-n(1+\gammav)}\mathcal{H}^{n-1}(du)\mathcal{H}^{n-1}(dw).
\end{align} 
Therefore,
\begin{align}
\notag & \int_{(\R_+)^n\times\R^n}e^{-f(t,y)}e^{-(t_1+\ldots +t_n)}|y_1|^{\gamma_1}\cdots |y_n|^{\gamma_n}dydt \\
\notag = & \Gamma(n(1+\gammav))\Gamma ((1-H(1+\gammav))n)  I_1\\
\notag  = &  \Gamma(n(1+\gammav))\Gamma ((1-H(1+\gammav))n)    (1-H(1+\gammav)) n^{3/2} I_0 \\
\notag = & C(n, \gamma, H)^{-1} I_0,
\end{align} 
where we have used  the relation \eqref{eq:kaali}.

To establish~\eqref{eq:jelppi200} note that with the change of variables $y_j = u_j / t_j^H$,   
\begin{align}
\notag & \int_{(\R_+)^n\times\R^n}e^{-g(\CR{t_1^H |y_1|,\ldots, t_n^H |y_n|})}e^{-(t_1+\ldots +t_n)}|y_1|^{\gamma_1}\cdots |y_n|^{\gamma_n}dydt \\
\notag = & \int_{(\R_+)^n\times\R^n}e^{-g(\CR{|u_1|, \ldots, |u_n|})}e^{-(t_1+\ldots +t_n)}|u_1|^{\gamma_1}\cdots |u_n|^{\gamma_n} \prod_{j = 1}^n t_j^{- H(1 + \gamma_j)} dudt \\
\notag = &\prod_{j = 1}^n \Gamma(1 - H(1 + \gamma_j))  \int_{\R^n}e^{-g(\CR{|u_1|, \ldots, |u_n|)}} |u_1|^{\gamma_1}\cdots |u_n|^{\gamma_n} \prod_{j = 1}^n t_j^{- H(1 + \gamma_j)} du,
\end{align}
and the conclusion follows after an application of~\eqref{eq:jelppi100}. 
\end{proof}

Finally, we establish~\eqref{eq:alku}.  By homogeneity, 
\begin{align}
\notag \int_{\partial([-u,u])}\prod_{j=1}^n |y_j|^{\gamma_j}\mathcal{H}^{n-1}(dy) = u^{n \gammav} u^{n-1} \int_{\partial([-1,1]^n)}\prod_{j=1}^n|y_j|^{\gamma_j}\mathcal{H}^{n-1}(dy).
\end{align}
Next, by Fubini theorem, 
\begin{align}
\notag \int_{[-1,1]^n}\prod_{j=1}^n|y_j|^{\gamma_j}dy=\frac{1}{n(1+\gammav)}
\int_{\partial([-1,1]^n)}\prod_{j=1}^n|y_j|^{\gamma_j}\mathcal{H}^{n-1}(dy) \eqqcolon \frac{A}{n ( 1 + \gammav)},
\end{align}
where $A$ is the integral over the boundary. Moreover, 
\begin{align}
\notag \int_{[-1,1]^n}\prod_{j=1}^n|y_j|^{\gamma_j}dy = \prod_{j =1 }^n \int_{[-1, 1]} |y_j|^{\gamma_j} dy_j = \prod_{j =1}^n \frac{2}{1 + \gamma_j}.
\end{align}
Next, consider
\begin{align}
\notag &  \int_{\R^n}e^{-\max_{1\leq j\leq n}|y_j|}\prod_{j=1}^n|y_j|^{\gamma_j}dy  \\
\notag = & \int_0^\infty   \int_{\partial([-u,u])}e^{-\max_{1\leq j\leq n}|y_j|} \prod_{j=1}^n |y_j|^{\gamma_j}\mathcal{H}^{n-1}(dy) du \\
\notag = & \int_0^\infty   e^{-u} \int_{\partial([-u,u])} \prod_{j=1}^n |y_j|^{\gamma_j}\mathcal{H}^{n-1}(dy) du \\
\notag = & \int_0^\infty e^{-u}A u^{n-1}u^{n\gammav}du \\
\label{eq:jelppi300} = & n(1+\gammav)\Gamma (n(1+\gammav))\prod_{j=1}^n\frac{2}{1+\gamma_j}.
\end{align}
Finally, applying~\eqref{eq:jelppi300} in~\eqref{eq:jelppi200} with $g(|y_1|, \ldots , |y_n|) = \max_{1\leq j\leq n}|y_j|$ yields
\begin{align}
\notag  & \int_{S^{n-1}}\int_{\substack{t_1+t_2+\ldots +t_n\leq 1\\ t_1,\ldots, t_n\geq 0}}\prod_{j=1}^n |y_j|^{\gamma_j}(f(t,y))^{-n(1+\gammav)}dt\mathcal{H}^{n-1}(dy) \\
\notag = &  \frac{n^{1/2}(1+\gammav)}{\Gamma(n(1-H(1+\gammav)+1)} \prod_{j=1}^n \left[ \frac{2}{1+\gamma_j} \Gamma(1 - H(1 + \gamma_j))\right] 
\end{align}
where $f: \R_+^n \times \R^n \to (0, \infty)$ is given by
\begin{align}
\notag f(t, y) = t_1^H |y_1| \vee t_2^H |y_2| \vee \cdots \vee t_n^H |y_n|,
\end{align}
and~\eqref{eq:alku} is established.

\end{proof}

\noindent
{\bf Acknowledgments.}  G. Kerchev and I. Nourdin are supported by the FNR OPEN grantAPOGee at Luxembourg University.

\end{document}